\def\cleardoublepage{\clearpage\if@twoside \ifodd\c@page\else%
         \hbox{}%
     \thispagestyle{empty}%              % Empty header styles
     \newpage%
     \if@twocolumn\hbox{}\newpage\fi\fi\fi}
\let\cleardoublepage\clearpage
\newtheorem{thm}{Theorem}[section]
\newtheorem{lem}[thm]{Lemma}
\newtheorem{den}[thm]{Definition}
\newtheorem{oss}[thm]{Remark}
\numberwithin{equation}{section}
\newcommand{\ren}{\mathbb{R}^n}
\begin{document}

\title[The PME on manifolds with negative, superquadratic curvature]{The porous medium equation\\ [6pt] on Riemannian manifolds with negative curvature: \\ [6pt] The superquadratic case \\}

\author {Gabriele Grillo, Matteo Muratori}

\author{Juan Luis V\'azquez}

\address {Gabriele Grillo: Dipartimento di Matematica, Politecnico di Milano, Piaz\-za Leo\-nar\-do da Vinci 32, 20133 Milano, Italy}
\email{gabriele.grillo@polimi.it}

\address{Matteo Muratori: Dipartimento di Matematica, Politecnico di Milano, Piaz\-za Leo\-nar\-do da Vinci 32, 20133 Milano, Italy}
\email{matteo.muratori@polimi.it}

\address{Juan Luis V\'azquez: Departamento de Matem\'aticas, Universidad Aut\'onoma de Ma\-drid, 28049 Madrid, Spain}
\email{juanluis.vazquez@uam.es}

\subjclass[2010]{Primary: 35R01. Secondary: 35K65, 58J35, 35A01, 35A02, 35B44}
\keywords{Porous medium equation; Cartan-Hadamard manifolds; very negative curvature; separable solutions; asymptotic behaviour; nonlinear elliptic equations.}

%
%\address {Gabriele Grillo, Matteo Muratori: Dipartimento di Matematica, Politecnico di Milano, Piaz\-za Leonardo da Vinci 32, 20133 Milano, Italy}
%
%\email {gabriele.grillo@polimi.it}
%
%
%\email {matteo.muratori@polimi.it}

%\vskip-20pt

\begin{abstract} We study the long-time behaviour of nonnegative solutions of the Porous Medium Equation posed on Cartan-Hadamard  manifolds having very large negative curvature, more precisely when the sectional or Ricci curvatures diverge at infinity more than quadratically in terms of the geodesic distance to the pole. We find an unexpected separate-variable behaviour that reminds one of Dirichlet problems on bounded Euclidean domains.
As a crucial step, we prove existence of solutions to a related sublinear elliptic problem, a result of independent interest. Uniqueness of solutions vanishing at infinity is also shown, along with comparison principles, both in the parabolic and in the elliptic case.

Our results complete previous analyses of the Porous Medium Equation flow on negatively curved Riemannian manifolds, which were carried out first for the hyperbolic space and then for general Cartan-Hadamard manifolds with a negative curvature having at most quadratic growth. We point out that no similar analysis  seems to exist for the linear heat flow.

We also translate such results into some weighted Porous Medium Equations in the Euclidean space having special weights.
\end{abstract}

\maketitle

\section{Introduction}
We study the porous medium equation (PME for short)
\begin{equation}\label{pme}
\begin{cases}
u_t=\Delta u^m  & \text{in } M \times \mathbb{R}^+ \, , \\
u(\cdot,0)=u_0  & \text{in } M  \, ,
\end{cases}
\end{equation}
where $m>1$, $\Delta=\Delta_g$ is the Laplace-Beltrami operator on a complete Riemannian manifold $(M,g)$ of dimension $n\ge2$ and the initial datum $u_0$ is assumed to be nonnegative, bounded and, in some of our results, compactly supported\footnote{These conditions can be relaxed, see below, but they allow to simplify the presentation.}. The main assumption on $M$ is that it is a
\it Cartan-Hadamard \rm manifold, namely that it is complete, simply connected and has everywhere \it nonpositive \rm sectional curvature.  It is well known that a minimal solution to \eqref{pme} always exists for bounded data, see Section \ref{mainres} for some detail.

The study of the above problem has been initiated in \cite{V}, where the accurate long-time behaviour of the PME was established on the hyperbolic space $\mathbb{H}^n$  by means of sharp space-time asymptotic estimates for solutions corresponding to compactly supported data. The results on the size and shape of the solutions for large times, as well as the size of their support, are completely different from the well-known results on the Euclidean space, see \cite{V07}. They were then considerably extended in \cite{GMV}, where corresponding asymptotic results were shown to hold for general Cartan-Hadamard manifolds under only appropriate (sectional, radial) \it curvature conditions\rm. In fact, let $ \mathrm{K}_\omega(x)$ denote the  \emph{sectional curvature} with respect to any $2$-dimensional tangent subspace $ \omega $ at $x$ that contains the radial direction w.r.t.\ a pole $o$. The bound we assume will be of the form \begin{equation}\label{upper-intro}
\mathrm{K}_\omega(x) \le -Q \, r^{2 \mu} \quad \forall x \in M \setminus B_R(o) \, ,
\end{equation}
where $ Q>0 $, $ B_R(o)$ is the geodesic ball of radius $ R>0 $ centered at $o$, and $r=r(x)$ represents the geodesic distance $\operatorname{d}(x,o)$ between $o$ and $x$, or similar lower bounds on the radial Ricci curvature. In \cite{GMV} the range of exponents $-\infty <\mu \le 1$ was covered, and a precise classification was found:  the exponents $\mu<-1$ correspond to quasi-Euclidean behaviour, namely
\begin{equation*}
\|u(t)\|_{L^\infty(M)} \sim t^{-\frac{1}{m-1+2/n}}
\end{equation*}
at large $t$. On the contrary, for $-1 <\mu<1$ we speak of quasi-hyperbolic behaviour, which is characterized, still for large times, by the decay rate
\begin{equation*}
\|u(t)\|_{L^\infty(M)}^{m-1} \sim \frac{ (\log t)^{\frac{1-\mu}{1+\mu}} }{t} \,,
\end{equation*}
as well as the support propagation rate
\begin{equation}\label{radius}
R(t) \sim \kappa \, (\log t)^{\frac{1}{1+\mu}} \, , \quad \kappa  > 0 \,,
% c=\frac1{(m-1)(n-1)}, but we can't say it rigorously
\end{equation}
where $R(t)$ indicates the location of  any point of the free boundary measured in geodesic distance from $o$. The delicate transition at $\mu=-1$ is also considered in detail in \cite{GMV}, and it corresponds to Euclidean-type behaviours with a ``fractional'' dimension that depends on $ n $ and $Q$.

The following more precise information is relevant. It happens that the behaviour of all solutions we have considered so far mimics the fundamental solution of the corresponding problem, i.e.~the solution with initial datum a Dirac mass located at a fixed point that we take as origin of coordinates and origin of the geodesic distance. As shown in \cite{V} for the hyperbolic case, such a solution is not self-similar and this is usually a source of difficulties, but in fact it is approximately self-similar for all large times. In \cite{GMV} such approximation is expressed for $\mu\in (-1,1) $ by means of sub- and super-solutions of the form
\begin{equation}\label{barrier1}
U(x,t)^{m-1}\sim \frac{C}{t}\left[ (\mu-1) \left( \gamma\,[\log t]^{\frac{1-\mu}{1+\mu}}-r^{1-\mu} \right) \right]_+
\end{equation}
for suitable constants $C,\gamma>0$ involving the initial datum.

Finally, the analysis ends at the critical case $\mu =1$, which exhibits special asymptotic features, typical of a borderline situation. In particular, the $L^\infty$ asymptotic estimate is shown to be
\begin{equation*}
\|u(t)\|_{L^\infty(M)}^{m-1} \sim \frac{\log \log t}{t}\,.
\end{equation*}

\medskip

\noindent {\bf New problem on Cartan-Hadamard manifolds and main results.} It was pointed out in \cite{GMV} that if sectional curvatures are very negative in the sense that $\mu>1$, i.e.~if they are allowed to grow faster than quadratically at infinity, a new type of  asymptotic pattern should arise. Investigating the properties of flows on manifolds with such a highly divergent curvature seems to go beyond the scope of the existing literature. We stress that, to the best of our knowledge, there is no existing discussion of the \it linear \rm case, namely of heat kernel behaviour, under the present curvature conditions. See \cite[Section 10]{GMV} for further details.

\noindent $\bullet$ The purpose of this paper is to address that open problem and find the missing behaviour. More precisely, we study the asymptotic behaviour of solutions to \eqref{pme} under the curvature condition \eqref{upper-intro} with $\mu>1$.  For the validity of some of our results, a matching lower bound on the radial Ricci curvature will be also assumed. It turns out that the long-time asymptotics is significantly different from the case $\mu\le1$. Indeed,  the evolution is now controlled by a \it separable \rm solution $ \mathcal{U} $ of the form
\[
\mathcal{U}(x,t)=\frac{v(x)}{t^{\frac1{m-1}}} \quad \forall (x,t) \in M \times \mathbb{R}^+ \, ,
\]
in which the spatial part is a positive solution of the \it nonlinear elliptic problem \rm
\begin{equation}\label{elliptic}
-\Delta v^m=\frac{1}{m-1} \, v \quad \text{in } M \, .
\end{equation}
In fact, a crucial part of the present work consists in showing that problem \eqref{elliptic} does have a \emph{minimal}, \emph{positive} regular solution, which tends to zero at infinity with a certain explicit power rate, see Theorem \ref{thm-ell-ex} below.

As just pointed out, the key assumption for such result to be true is that the curvature condition \eqref{upper-intro} holds with $\mu>1$: indeed, in this case, problem \eqref{elliptic} is reminiscent of its analogue for the PME posed on a \it bounded domain \rm in the \it Euclidean space\rm, or a certain \it weighted \rm nonlinear elliptic problem in the whole ${\mathbb R}^n$ as studied in   \cite{BK}. Actually, the latter connection is more than a mere analogy, since a suitable change of variables introduced in \cite{V} and then carefully studied in \cite{GMV} turns \eqref{elliptic} exactly into a related weighted Euclidean problem (at least in the radial setting), see Section \ref{weighted} for further details.

\noindent $\bullet$  Thanks to the above-mentioned existence result for a minimal, positive solution to \eqref{elliptic}, a corresponding strong asymptotic result  follows for solutions to \eqref{pme} (starting from a suitable class of initial data):
\[
\lim_{t\to + \infty} \left\| t^{\frac1{m-1}} u(\cdot,t) - v \right\|_{L^\infty(M)} = 0 \, .
\]
We refer the reader to Theorem \ref{thm:conv} below for the precise statement. The latter has the look of the theorems for solutions of the Dirichlet problem in a bounded Euclidean domain, but of course this gives significant information mainly inside a given ball.

\noindent $\bullet$ We also need  to better understand the behaviour in \emph{exterior} regions, where  we know that $u(x,t)=o(t^{-1/(m-1)})$ but we also know that there must be a finite bound for the support that needs being determined. Therefore,  we  investigate pointwise space-time bounds for solutions to \eqref{pme} in the spirit of \cite{GMV}. In fact, we shall prove (see Theorems \ref{thm:main} and \ref{thm:main:low } below) sharp upper and lower bounds on such solutions in terms of barriers which are similar, from an algebraic point of view, to the ones found in \cite{GMV} in the case $\mu<1$, given by formula \eqref{barrier1}. More specifically, for $\mu >1$ the long-time asymptotics of solutions is still approximately described by the profile \eqref{barrier1}, though inside the positive part signs are reversed.
%\begin{equation}\label{barrier1-rev}
%U(x,t)^{m-1} \sim \frac{C}{t}\left[\frac1{r^{\mu-1}}- \frac{\gamma}{(\log t)^{\frac{\mu-1}{\mu+1}}}\right]_+ ,
%\end{equation}
The barrier is actually good for $r$ away from zero, and it immediately allows us to locate the free boundary, cf.~\eqref{bound-smooth-infty}. Thus, a remarkable formal similarity holds across the dividing value $\mu=1$, with quite different qualitative consequences. It does not hold of course at the very critical value $\mu=1$ as shown in \cite{GMV}.

\noindent $\bullet$ Another major contribution of this work concerns \it uniqueness \rm of the minimal solutions to problems \eqref{elliptic} and \eqref{pme}. This will be obtained in the class of solutions that, in an appropriate sense, tend to zero at (spatial) infinity, and will follow from comparison principles ``at infinity'', that have an independent interest. Uniqueness need not hold outside this class. See Theorem \ref{thm-ell-ex}, \it ii\rm) for the elliptic result, and Theorem \ref{uniqueness-parabolic} for the parabolic one. The proofs in the parabolic case are delicate applications of the \it duality methods \rm of  \cite{BCP,Pierre} and make essential use of the Dirichlet Green's function on geodesic balls, whereas the elliptic result follows from the parabolic ones. Some of the ideas are inspired from \cite{BK}. We notice that non-uniqueness results for solutions to the heat equation under the present curvature conditions are well-known: see e.g. \cite{IM, Grig, M1, I, I2, M2}.

We state all of the results here discussed in Section \ref{notation} and prove them in Section \ref{sect:proofs}.

\medskip

\noindent {\bf New results for weighted Euclidean Porous Medium Flows.} In \cite{V} a strong connection was found between Porous Medium flows on the hyperbolic space $\mathbb{H}^n$ and a class of weighted Porous Medium flows on the Euclidean space $\ren$. Actually, there exists a change of variable that transforms radially symmetric solutions of the first problem into solutions of the second, for a precise choice of the weight. This is interesting since the theory of weighted PME's is well developed in an independent way. The equivalence was then extended in \cite{GMV} to cover PME flows on a wide class of Cartan-Hadamard manifolds, i.e.~those mentioned above, and a corresponding large class of weighted PME's. In the final Section \ref{weighted} of this paper we exploit such change-of-variable technique  to obtain a result concerning the \emph{weighted Euclidean} PME
 \begin{equation}\label{pmeweighted}
\begin{cases}
\rho \, u_t=\Delta u^m  & \text{in } \mathbb{R}^n \times \mathbb{R}^+ \, , \\
u(\cdot,0)=u_0  & \text{in } \mathbb{R}^n  \, ,
\end{cases}
\end{equation}
for a class of (supercritical) weights $\rho$ which had not been addressed in \cite{GMV}. More precisely, the weights behave at infinity  like
\begin{equation*}
\rho(x) \sim \frac{C}{|x|^2 \, (\log |x|)^{\nu}}
\end{equation*}
for some $ \nu>1 $, where $|x|$ stands for Euclidean distance to the origin. In Section \ref{weighted} we show precise upper and lower bounds for solutions to \eqref{pmeweighted}, and prove a convergence result in terms of the separable solution constructed from the solution to the weighted nonlinear (sublinear in fact) elliptic problem
\begin{equation*}\label{elliptic.2}
-\Delta V^m = \frac{\rho(x)}{m-1} \, V \quad \text{in } \mathbb{R}^n \, .
\end{equation*}
This elliptic  equation is to be compared with \eqref{elliptic}.

%%%%%%%%%%%%%%%%%%%%%%%%%%%%%%%%%%%%%%%%%%%%%%%%%%%%%%%%%%%%%%%%%%%%%%%%%%%%%%%%%%%%%%%%%%
\section{Preliminaries and statements of the results on manifolds}\label{notation}
As mentioned above, we shall work with general Cartan-Hadamard manifolds, but in order to prove our results it will be crucial to deal with the particular type called a Riemannian \emph{model manifold}. The latter is defined as a Riemannian manifold which is spherically symmetric with respect to a given pole $ o \in M $, and whose metric $ \mathrm{d}l $ is given by
\begin{equation*}\label{model}
\mathrm{d}l^2=\mathrm{d}r^2 + \psi^2(r) \, \mathrm{d}\theta^2
\end{equation*}
for a given (sufficiently) smooth, nonnegative function $\psi$ defined on $[0,+\infty)$ and satisfying $\psi(0)=0$, $ \psi'(0)=1$ (what we shall call a \emph{model function} from now on). Here,  $r=r(x)$ represents the geodesic distance $\operatorname{d}(x,o)$ between $o$ and a given point $x$, while $ \mathrm{d}\theta $ represents the standard metric on the sphere $ \mathbb{S}^{n-1} $. Because a model manifold is fully identified by $ \psi $, in such case we shall write $ M \equiv M_\psi $. In order to give a flavour of the class of manifolds we treat, consider the following explicit choice, given $\alpha>2$ and $ a,A>0 $:
\begin{equation*}\label{eq:psi}
\psi(r)=\begin{cases}
r &\text{if}\ r\in[0,\overline{r}] \, , \\
%\frac{\overline{r}^{1-\alpha}}{\alpha a_1}e^{a_1(r^\alpha-\overline{r}^\alpha)}+\overline{r}-\frac{\overline{r}^{1-\alpha}}{\alpha a_1}
A\left(e^{a r^\alpha}-e^{a \overline{r}^\alpha}\right)+\overline{r} & \text{if}\ r>\overline{r} \, ,
\end{cases}
\end{equation*}
where $\overline{r}$ is the unique solution of $a A \alpha \, \overline{r}^{\alpha-1} e^{a \overline{r}^\alpha}=1 $, so that $ \psi $ is at least globally $ C^1 $. Upon denoting for short by $K(x)$ the sectional curvature at $x \in M_\psi $ with respect to any plane containing the radial direction, it is well known that on any model manifold $K(x)=-\psi''(r)/\psi(r)$, so that a trivial computation in our case yields
\[
K(x) \sim -a^2 \alpha^2\,r^{2\alpha-2} \ \ \ \textrm{as}\ r\to+\infty \, , \quad \text{i.e. } \mu=\alpha-1\,,
\]
and $\alpha>2$ is equivalent to $\mu>1$.

\noindent $\bullet$ One can use model manifolds $ M_\psi $ for (Laplacian) comparison with our original manifold $M$ as follows. Let us write the Laplace-Beltrami operator on $ M $ in polar coordinates (this is always possible, see e.g.~\cite[Section 2.2]{GMV}), namely
\begin{equation}\label{radial}
\Delta = \frac{\partial^2}{\partial r^2} + \mathsf{m}(r, \theta) \, \frac{\partial}{\partial r}+\Delta_{S_{r}}
 \,,
\end{equation}
where again $r:=\mathrm{d}(x,o)$, $o$ is a given (fixed) pole on $M$, $\theta\in{\mathbb S}^{n-1}$, $ S_r := \partial B_r(o) $, $ B_r(o) $ is the geodesic ball of radius $r$ on $M$ centered at $o$, $ \Delta_{S_{r}} $ is the Laplace-Beltrami operator on $ S_r $ and $ \mathsf{m} $ is an  appropriate function which coincides with  $ \Delta r$ away from the pole. In the case of a model manifold \eqref{radial} reads
\begin{equation}\label{radial-model}
\Delta = \frac{\partial^2}{\partial r^2} + (n-1) \, \frac{\psi^\prime(r)}{\psi(r)} \, \frac{\partial}{\partial r} + \frac{1}{\psi(r)^2} \, \Delta_{\mathbb{S}^{n-1}} \, .
\end{equation}
By classical results (see for instance the monograph \cite{GW}), if the \emph{sectional curvature} $ \mathrm{K}_\omega(x)$, with respect to any $2$-dimensional tangent subspace $ \omega $ at $x$ that contains the radial direction, satisfy the upper bound
\begin{equation*}
\mathrm{K}_\omega(x) \leq -\frac{\psi''(r)}{\psi(r)} \quad \text{for all } x\equiv (r,\theta)\in M\setminus\{o\}
\end{equation*}
for a given model function $ \psi $, then the Laplacian of the distance function on $M$ satisfies the lower bound
\begin{equation}\label{comp1}
\mathsf{m}(r,\theta) \geq (n-1) \, \frac{\psi'(r)}{\psi(r)}\quad \textrm{for all }  r>0 \, , \ \theta \in \mathbb{S}^{n-1} \, .
\end{equation}

\noindent $\bullet$ On the other hand, if we denote by $\mathrm{Ric}_{o}(x)$ the \emph{Ricci curvature} in the radial direction at $x$ and the lower bound
\[
\textrm{Ric}_{o}(x)\geq -(n-1)\,\frac{\psi''(r)}{\psi(r)} \quad \text{for all } x \equiv (r,\theta) \in M\setminus\{o\}
\]
holds for another given model function $\psi$, then the Laplacian of the distance function on $M$ satisfies the upper bound
\begin{equation*}\label{comp2}
\mathsf{m}(r, \theta) \leq (n-1) \, \frac{\psi'(r)}{ \psi(r)} \quad \text{for all } r>0 \, , \ \theta \in \mathbb S^{n-1}\,.
\end{equation*}
These two comparison principles are crucial to our goals since, as put in evidence by \eqref{radial}, $\mathsf{m}(r,\theta)$ is involved in the polar formula for the Laplacian of \emph{any} function, i.e.~not necessarily radial.

In the following, since integrals on $M$ will also appear, we shall denote by $ d \mu $ the Riemannian volume measure on $M$.

%%%%%%%%%%%%%%%%%%%%%%%%%%%%%%%%%%%%%%%%%%%%%%%%%%%%%%%%%%%%%%%%%%%%%%%%%%%%%%%%%%%%%%%%%%%%%%%%%%%%%%%%

\subsection{Precise statements of the main results}\label{mainres}

We can now state in detail our main results: in order to simplify notations, at certain points we will write $ \liminf $, $ \limsup $ or $ \lim $ even though \emph{essential} limits would be more appropriate, as long as this does not create ambiguity. Since we shall often deal with limits as $ r \to +\infty $, we point out that they are always understood to be \emph{uniform} w.r.t.~$ \theta \in \mathbb{S}^{n-1} $.

The first result we address is concerned with the sublinear elliptic problem \eqref{elliptic}. Before, we need to provide appropriate definitions.

\begin{den}\label{defsolellipt}
We say that $v\in L^\infty(M)$, $v\ge0$, is a (very weak, or distributional) solution of the equation
\begin{equation}\label{eq:elliptic}
-\Delta {v}^m  = \frac 1{m-1} \, {v} \quad \text{in } M
\end{equation}
if it satisfies
%\begin{equation}\label{n1}
%u_t\,=\, \Delta (u^m)\quad \textrm{in}\;\; \mathfrak D'(M\times (0, T))\,,
%\end{equation}
\begin{equation}\label{veryweak}
- \int_M v^m \, \Delta \phi\, d\mu = \frac1{m-1}\int_M v \, \phi\, d\mu
\end{equation}
for all $\phi\in C^\infty_c(M)$. Similarly, we say that $v\in L^\infty(M)$, $v\ge0$, is a (very weak, or distributional) subsolution (supersolution) of equation \eqref{eq:elliptic} if \eqref{veryweak} holds with ``$=$'' replaced by ``$\leq$'' (``$\geq$''), for all $\phi\in C^\infty_c(M)$ with $ \phi\geq 0$.
\end{den}

\begin{thm}\label{thm-ell-ex}
Let $ M $ be an $n$-dimensional Cartan-Hadamard manifold of dimension $n\ge2$. Let a pole $ o \in M $ be fixed and $r=r(x):=\operatorname{d}(x,o)$.

\noindent(i) Assume that
\begin{equation}\label{hp-sect}
\mathrm{K}_\omega(x) \le -Q_1 \, r^{2 \mu_1} \quad \forall x \in M \setminus B_R(o)
\end{equation}
holds for some $\mu_1>1$ and $Q_1, R>0$. Then there exists a \emph{minimal} positive solution to the nonlinear elliptic equation \eqref{eq:elliptic},
which is regular, globally bounded and satisfies the upper estimate
\begin{equation}\label{eq:elliptic:above}
\limsup_{r\to+\infty} \, r^{\frac{\mu_1-1}{m-1}} \, v(x) \le  \frac{1}{\left[ m (\mu_1-1)(n-1) \sqrt{Q_1} \right]^{\frac{1}{m-1}}}  \, .
\end{equation}

\noindent(ii) The following \emph{comparison principle} holds: if $ \overline{v} $ is a positive, bounded supersolution of \eqref{eq:elliptic} and $ \underline{v} $ is a nonnegative, bounded subsolution of \eqref{eq:elliptic} such that
\begin{equation}\label{cond-elliptic-ss}
\liminf_{r \to +\infty} \, \left[ \overline{v}(x) - \underline{v}(x) \right] \ge 0 \, ,
\end{equation}
then $ \overline{v} \ge \underline{v} $ a.e.~in $ M $. In particular, under the validity of \eqref{hp-sect} the minimal solution $v$ is \emph{unique} in the class of nonnegative, nontrivial, bounded solutions satisfying
\begin{equation}\label{cond-elliptic}
\lim_{r \to + \infty} v(x)=0 \, .
\end{equation}
On the other hand, in this case uniqueness in the class of merely bounded solutions fails since for any $\alpha>0$ there exists a (unique) positive, bounded solution $v_\alpha$ of \eqref{eq:elliptic} such that
\begin{equation}\label{cond-elliptic-alpha}
\lim_{r \to +\infty} v_\alpha(x)=\alpha \, .
\end{equation}

\noindent (iii) If in addition to \eqref{hp-sect} there holds
\begin{equation}\label{hp-ricc}
\mathrm{Ric}_o(x) \ge -(n-1)\,Q_2 \,  r^{2 \mu_2} \quad \forall x \in M \setminus B_R(o)
\end{equation}
for some $\mu_2 \ge \mu_1 $ and $ Q_2,R >0 $, then $v$ also satisfies the \emph{matching} lower estimate
\begin{equation}\label{eq:elliptic:below}
\liminf_{r \to +\infty} \, r^{\frac{\mu_2-1}{m-1}} \, v(x) \ge  \frac{1}{\left[ m (\mu_2-1)(n-1) \sqrt{Q_2} \right]^{\frac{1}{m-1}}}  \, .
\end{equation}
As a consequence, the constants in the right-hand sides of \eqref{eq:elliptic:above} and \eqref{eq:elliptic:below} are \emph{sharp} on model manifolds $ M_\psi $ satisfying the curvature conditions \eqref{hp-sect} or \eqref{hp-ricc} as equalities outside a suitable ball.
\end{thm}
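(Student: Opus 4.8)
The plan is to reduce everything to radial ODE analysis on the comparison \emph{model} manifolds and then transfer the information to $M$ by the Laplacian comparison estimates. Let $M_{\psi_1}$ be the model manifold whose sectional curvature equals $-Q_1\,r^{2\mu_1}$ outside $B_R(o)$ and is flat inside (so that, by Cartan--Hadamard, $\mathrm{K}_M\le -\psi_1''/\psi_1$ holds \emph{globally}, hence $\mathsf m(r,\theta)\ge (n-1)\psi_1'(r)/\psi_1(r)$ for all $r$ by \eqref{comp1}); define $M_{\psi_2}$ analogously from the Ricci bound \eqref{hp-ricc}, giving $\mathsf m(r,\theta)\le (n-1)\psi_2'(r)/\psi_2(r)$. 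A Riccati analysis of $\psi_i''=Q_i r^{2\mu_i}\psi_i$ yields $\psi_i'(r)/\psi_i(r)\sim \sqrt{Q_i}\,r^{\mu_i}$ as $r\to+\infty$. On a model manifold, using \eqref{radial-model}, a radial profile solves \eqref{eq:elliptic} iff $-(w^m)''-(n-1)(\psi'/\psi)(w^m)'=\tfrac1{m-1}w$; inserting the ansatz $w^m\sim A\,r^{-\beta}$ and using $\psi'/\psi\sim\sqrt Q\,r^\mu$ forces $\beta=\tfrac{m(\mu-1)}{m-1}$ and $A^{1/m}=\big[m(\mu-1)(n-1)\sqrt Q\,\big]^{-1/(m-1)}$ — this is precisely where the constants in \eqref{eq:elliptic:above}--\eqref{eq:elliptic:below} come from. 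A shooting argument for this ODE (exploiting the sublinearity and the fast growth of $\psi'/\psi$) produces on each $M_{\psi_i}$ a \emph{minimal} positive, bounded, decreasing radial solution $V_i$ with exactly that tail.

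For part (i), the point is that, since $V_1$ is radial and decreasing and $\mathsf m(r,\theta)\ge (n-1)\psi_1'/\psi_1$, the function $V_1$ read on $M$ is a \emph{global}, positive, bounded supersolution of \eqref{eq:elliptic}. I would then solve the Dirichlet problem $-\Delta v_k^m=\tfrac1{m-1}v_k$ in $B_k(o)$, $v_k=0$ on $\partial B_k(o)$; setting $w_k=v_k^m$ this is the sublinear problem $-\Delta w_k=\tfrac1{m-1}w_k^{1/m}$, which has a unique positive solution by the sub/supersolution method (a small multiple of the first Dirichlet eigenfunction from below, a large constant from above). Standard monotonicity for sublinear Dirichlet problems gives that $v_k$ is nondecreasing in $k$, while comparison with the global supersolution gives $v_k\le V_1$; hence $v_k\uparrow v$, a bounded solution of \eqref{eq:elliptic}, regular by elliptic regularity and minimal by construction, with $v\le V_1$. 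To upgrade $v\le V_1$ to the \emph{sharp} estimate \eqref{eq:elliptic:above} I would compare $v$, on each $M\setminus B_{R_\varepsilon}(o)$, with a radial model supersolution whose tail constant is $(1+\varepsilon)$ times the critical one and which equals $\|v\|_{L^\infty(M)}$ on $\partial B_{R_\varepsilon}(o)$, and then let $\varepsilon\to0$.

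For part (ii), the comparison principle is to be deduced from the parabolic comparison principle of Theorem~\ref{uniqueness-parabolic}, in the spirit of \cite{BK} and the duality method of \cite{BCP,Pierre}. Given a positive bounded supersolution $\overline v$ and a nonnegative bounded subsolution $\underline v$ of \eqref{eq:elliptic} with \eqref{cond-elliptic-ss}, one checks that $\overline U_s(x,t):=\overline v(x)(t+s)^{-1/(m-1)}$ is a supersolution of \eqref{pme} for every $s\in(0,1]$ and $\underline U(x,t):=\underline v(x)(t+1)^{-1/(m-1)}$ a subsolution; since \eqref{cond-elliptic-ss} gives $\limsup_{r\to+\infty}(\underline U-\overline U_s)\le0$ uniformly in $t$, the parabolic comparison yields $\underline v(x)(t+1)^{-1/(m-1)}\le\overline v(x)(t+s)^{-1/(m-1)}$, and letting $t\to+\infty$ (so the ratio of time factors tends to $1$) gives $\overline v\ge\underline v$ — with some care needed to arrange the initial ordering at $t=0$, which is handled by a preliminary reduction (e.g.\ to $\inf_M\overline v>0$, or an approximation argument). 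Alternatively one argues directly, testing $-\Delta(\overline v^m-\underline v^m)\ge\tfrac1{m-1}(\overline v-\underline v)$ against solutions of a suitable dual problem on geodesic balls, the decay of the Dirichlet Green's function (a consequence of the superquadratic curvature) together with \eqref{cond-elliptic-ss} killing the boundary contributions. Uniqueness in the class \eqref{cond-elliptic} is then immediate: by (i) the minimal solution satisfies \eqref{cond-elliptic}, and any two solutions satisfying \eqref{cond-elliptic} satisfy \eqref{cond-elliptic-ss} in both directions. Finally, for $\alpha>0$ the constant $\alpha$ is a subsolution of \eqref{eq:elliptic}, while $(\alpha^m+C\,r^{-\gamma})^{1/m}$ with $0<\gamma<\mu_1-1$ and $C$ large is a supersolution tending to $\alpha$ (here $-\Delta(r^{-\gamma})\to+\infty$ because $\psi_1'/\psi_1$ grows faster than $r$); monotone iteration between these barriers produces $v_\alpha$ with \eqref{cond-elliptic-alpha}, and its uniqueness in that class again follows from the comparison principle.

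For part (iii), under the extra bound \eqref{hp-ricc} we have $\mathsf m\le (n-1)\psi_2'/\psi_2$, so the minimal model solution $V_2$ on $M_{\psi_2}$, read on $M$, is now a global \emph{subsolution} of \eqref{eq:elliptic}; since $V_2$ decays no slower than $v$ (strictly faster when $\mu_2>\mu_1$, with the constant comparison built in when $\mu_2=\mu_1$), applying the comparison principle of (ii) with $\overline v=v$ and $\underline v=(1-\varepsilon)V_2$ gives $v\ge(1-\varepsilon)V_2$, whence \eqref{eq:elliptic:below} after $\varepsilon\to0$. Sharpness is then immediate: on a model $M_\psi$ realising $\mathrm{K}_\omega=-Q\,r^{2\mu}$ (equivalently $\mathrm{Ric}_o=-(n-1)Q\,r^{2\mu}$) as an equality outside a ball, \eqref{eq:elliptic:above} and \eqref{eq:elliptic:below} coincide, so $r^{(\mu-1)/(m-1)}v(x)$ has precisely the stated limit. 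I expect the main obstacle to be twofold: first, the precise tail analysis giving the \emph{sharp} constant — the Riccati asymptotics of $\psi'/\psi$ together with the careful matching of the explicit radial barriers and the uniform control of the approximants; and, more substantially, the comparison principle ``at infinity'' — the parabolic duality/Green's-function argument, which must absorb the degeneracy of the PME and the behaviour at spatial infinity, and whose use here requires the delicate reduction ensuring the correct ordering of the barriers at $t=0$ when $\overline v$ is allowed to vanish at infinity.
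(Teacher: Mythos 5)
Your overall architecture coincides with the paper's: Dirichlet approximations $v_k$ on balls bounded by a radial supersolution obtained through Laplacian comparison, reduction of the elliptic comparison principle in (ii) to the parabolic Theorem~\ref{uniqueness-parabolic} via separable barriers, an explicit supersolution construction for the non-uniqueness statement, and a subsolution-plus-comparison argument for (iii). The genuine gap is the sharp constant in \eqref{eq:elliptic:above}, which is the quantitative heart of (i). You assert that a shooting argument on the model $M_{\psi_1}$ produces a minimal radial solution ``with exactly that tail'', but the formal ansatz only identifies what the constant should be; it does not produce a solution or a barrier attaining it. Your fallback --- comparing $v$ on $M\setminus B_{R_\varepsilon}(o)$ with a radial supersolution whose tail constant is $(1+\varepsilon)$ times critical and which equals $\|v\|_{L^\infty(M)}$ on $\partial B_{R_\varepsilon}(o)$ --- presupposes a barrier that is not available: for profiles of the type $C\,(r^2+r_0^2)^{-(\mu_1-1)/(2(m-1))}$ the second-order and inner-region terms have the unfavourable sign and can only be absorbed by inflating $C$ well above the critical value (this is exactly the constraint in Lemma~\ref{ell-ssol}, where $C^{m-1}$ must dominate a quantity growing like $r_0^{\mu_1+1}$), so one cannot simultaneously prescribe a near-critical tail and a large value on an inner sphere; constructing such an exterior supersolution is precisely what needs proof. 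The paper closes this gap by a different mechanism: on the model it integrates the radial ODE twice to get $v^m(r)=\tfrac1{m-1}\int_r^{+\infty}\psi^{1-n}(t)\int_0^t\psi^{n-1}(s)\,v(s)\,ds\,dt$, shows via l'H\^opital and $\psi'/(r^{\mu_1}\psi)\to\sqrt{Q_1}$ that a tail bound with constant $C$ self-improves to the constant $\left[C/\bigl(m(\mu_1-1)(n-1)\sqrt{Q_1}\bigr)\right]^{1/m}$, uses the explicit near-sharp subsolution of Lemma~\ref{lemma-lower} to guarantee the iteration is monotone towards the fixed point, and then transfers the resulting sharp bound to $M$ because the radially decreasing model solution is a supersolution on $M$. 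Some such iteration (or equivalent ODE analysis) must be supplied; note also that your (iii) leans on the unproved sharp tail of $V_2$, whereas the explicit subsolution of Lemma~\ref{lemma-lower}, with constant $\bigl[m(\mu_2-1)(n-1)\sqrt{Q_2+\varepsilon}\bigr]^{-1/(m-1)}$, makes the lower bound \eqref{eq:elliptic:below} a direct barrier argument needing no sharp model solution at all.

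Two further points. First, in (ii) the initial-ordering problem you flag is real, and neither of your proposed fixes is carried out (one cannot simply ``reduce to $\inf_M\overline v>0$'' when $\overline v$ is only assumed positive and may vanish at infinity, and \eqref{cond-elliptic-ss} alone does not give $\overline v\,s^{-1/(m-1)}\ge\underline v$ at $t=0$). The paper's device is to compare $\underline v(x)(t+1)^{-1/(m-1)}$ with $\overline u(x,t):=\overline v(x)\,t^{-1/(m-1)}\wedge\|\underline v\|_\infty$, whose initial trace is the constant $\|\underline v\|_\infty\ge\underline v$ (positivity of $\overline v$ makes the time factor blow up as $t\downarrow0$), so the ordering at $t=0$ is automatic and \eqref{cond-inf-bis} follows from \eqref{cond-elliptic-ss}. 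Second, your non-uniqueness barrier $(\alpha^m+C\,r^{-\gamma})^{1/m}$ is not a supersolution of \eqref{eq:elliptic} near the pole (it is unbounded there, and $r^{-\gamma}$ is not superharmonic at $o$ when $n=2$, nor when $\gamma\ge n-2$), so the condition $0<\gamma<\mu_1-1$ alone is insufficient; as in the paper, one must replace $r^{-\gamma}$ inside a ball by a bounded $C^1$ extension (the paper uses $W=K/r^{\mu_1-1}$ outside $B_{r_0}$, extended affinely inside, checked to be a weak supersolution of $-\Delta w=1$ using only $\mathsf{m}\ge(n-1)/r$). Both of these are repairable, but as written the argument is incomplete at these steps as well.
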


The solution $v$ of the elliptic problem \eqref{elliptic} has a crucial role in the study of the asymptotic behaviour of nonnegative solutions of the Porous Medium Equation \eqref{pme}. To this aim, let us start by giving the precise definition of solution we shall work with; since some of our results will also hold for sign-changing solutions, in such case the power $ u^m $ appearing in \eqref{pme}, as usual, is to be understood as $ |u|^{m-1}u $.

\begin{den}\label{defsol}
Let $ u_0 \in L^\infty(M) $. We say that $u\in L^\infty(M\times (0, +\infty))$ is a (very weak, or distributional) solution of problem \eqref{pme} if it satisfies
%\begin{equation}\label{n1}
%u_t\,=\, \Delta (u^m)\quad \textrm{in}\;\; \mathfrak D'(M\times (0, T))\,,
%\end{equation}
\begin{equation}\label{q50}
-\int_0^{+\infty} \int_M u\, \varphi_t\,  d\mu dt  = \int_0^{+\infty} \int_M u^m \, \Delta \varphi\, d\mu dt + \int_M u_0(x) \, \varphi(x,0)\, d\mu
\end{equation}
for all $\varphi\in C^\infty_c(M\times [0, +\infty))$. Similarly, we say that $u\in L^\infty(M\times (0, +\infty))$ is a (very weak, or distributional) subsolution (supersolution) of problem \eqref{pme} if \eqref{q50} holds with ``$=$'' replaced by ``$\leq$'' (``$\geq$''), for all $\varphi\in C^\infty_c(M\times [0, +\infty))$ with $ \varphi \ge 0 $.
\end{den}

The existence of a minimal solution to \eqref{pme}, for nonnegative bounded initial data, is a rather standard fact. Indeed, it can be obtained as a monotone limit, as $ R \to \infty $, of solutions $ u_R $ to the following auxiliary homogeneous Dirichlet problems on balls:
\begin{equation}\label{pme-balls}
\begin{cases}
\partial_t u_R = \Delta u_R^m  & \text{in } B_R(o) \times \mathbb{R}^+ \, , \\
u_R=0    & \text{on } \partial B_R(o) \times \mathbb{R}^+ \, , \\
u_R(\cdot,0)=u_0   & \text{in } B_R(o) \, .
\end{cases}
\end{equation}
By construction and comparison on balls, the map $ R \mapsto u_R $ is nondecreasing and therefore the limit object is well defined to be a bounded solution to \eqref{pme}, which is the smallest one among all nonnegative, bounded distributional solutions. For bounded and possibly sign-changing data, one can proceed e.g.~as pointed out in \cite[Section 3.3]{GMP} by using appropriate (signed) constants as barriers.

\begin{thm}\label{thm:conv}
Let $ M $ be a Cartan-Hadamard manifold of dimension $n\ge2$ satisfying the upper curvature bound \eqref{hp-sect} for some $\mu_1>1$ and $Q_1, R>0$. Let $u$ be the minimal solution to \eqref{pme} corresponding to any nonnegative, nontrivial initial datum $ u_0 \in L^\infty(M)$.  Let $v$ be the minimal, positive solution to \eqref{eq:elliptic} as in Theorem \ref{thm-ell-ex} (i). Then there holds
\begin{equation}\label{convergence}
\lim_{t \to + \infty} \left\| t^{\frac1{m-1}}u(\cdot,t)-v \right\|_{L^\infty(M)} = 0 \, .
\end{equation}
Moreover, $u$ satisfies the following universal upper bound:
\begin{equation}\label{universal}
u(x,t)\le \frac{v(x)}{t^{\frac1{m-1}}} \quad \text{for a.e. }(x,t) \in M \times \mathbb{R}^+ \, ;
\end{equation}
in particular, the \emph{absolute bound}
\begin{equation}\label{universal1}
\|u(t)\|_{L^\infty(M)}  \le \frac C{t^{\frac1{m-1}}} \quad \text{for a.e. } t>0
\end{equation}
holds for a suitable constant $C>0$ independent of $ u_0 $.
\end{thm}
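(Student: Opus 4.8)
The plan is to derive the universal bound \eqref{universal} first (which immediately gives \eqref{universal1} with $C=\|v\|_{L^\infty(M)}$, as well as the ``$\limsup\le 0$'' half of \eqref{convergence}), and then the matching ``$\liminf\ge 0$'' half, both by comparison with super- and sub-solutions modelled on the separable profile. For the upper bound, note that $\mathcal{U}(x,t):=v(x)\,t^{-\frac1{m-1}}$ is an \emph{exact} solution of \eqref{pme} on $M\times\mathbb{R}^+$: indeed $\partial_t\mathcal{U}=-\frac1{m-1}\,v\,t^{-\frac{m}{m-1}}$, while $\Delta\mathcal{U}^m=(\Delta v^m)\,t^{-\frac{m}{m-1}}=-\frac1{m-1}\,v\,t^{-\frac{m}{m-1}}$ by \eqref{eq:elliptic}. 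Recalling that $u=\lim_{R\to+\infty}u_R$, with $u_R$ the solution of the Dirichlet problem \eqref{pme-balls}, I would compare $u_R$ with $\mathcal{U}$ on $B_R(o)\times(\tau,+\infty)$: on the lateral boundary $\mathcal{U}=v\,t^{-\frac1{m-1}}>0=u_R$ since $v>0$, while at $t=\tau$ one has $u_R(\cdot,\tau)\le\|u_0\|_{L^\infty(M)}\le\big(\inf_{\overline{B_R(o)}}v\big)\,\tau^{-\frac1{m-1}}\le\mathcal{U}(\cdot,\tau)$ as soon as $\tau$ is small enough, where $\inf_{\overline{B_R(o)}}v>0$ because $v$ is positive and continuous. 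The comparison principle for the Dirichlet problem on the bounded domain $B_R(o)$ then gives $u_R\le\mathcal{U}$ on $B_R(o)\times(\tau,+\infty)$, and letting $\tau\downarrow 0$ and $R\uparrow+\infty$ yields \eqref{universal}.

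For the lower bound, given $\rho>0$ let $v_\rho$ be the unique positive solution of $-\Delta v_\rho^m=\frac1{m-1}v_\rho$ in $B_\rho(o)$ with $v_\rho=0$ on $\partial B_\rho(o)$; existence and uniqueness follow from the sublinearity of the problem (set $w:=v_\rho^m$, which solves $-\Delta w=\frac1{m-1}w^{1/m}$ with exponent $1/m<1$). By elliptic comparison for this sublinear problem, $\rho\mapsto v_\rho$ is nondecreasing with $v_\rho\le v$, so $v_\rho\uparrow v_\infty\le v$; since $v_\infty$ is a positive bounded solution of \eqref{eq:elliptic}, minimality of $v$ forces $v_\infty=v$, and Dini's theorem then gives $v_\rho\to v$ uniformly on compact sets. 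Now fix $\rho$: since $u_0$ is nonnegative and nontrivial, the (continuous) solution $u$ is strictly positive on $\overline{B_\rho(o)}$ for all $t\ge t_0$, with $t_0=t_0(\rho)$ — a consequence of the eventual expansion of the positivity set of the PME, in the spirit of the support-propagation analysis of \cite{GMV} — hence $u(\cdot,t_0)\ge c$ on $\overline{B_\rho(o)}$ for some $c=c(\rho)>0$. The function $W(x,t):=v_\rho(x)\,(t+s)^{-\frac1{m-1}}$ solves \eqref{pme} in $B_\rho(o)$ with zero lateral values, and $W(\cdot,0)\le c$ on $B_\rho(o)$ as soon as $s=s(\rho)$ is large; comparing $W$ with the supersolution $(x,t)\mapsto u(x,t_0+t)$ of the zero-Dirichlet problem on $B_\rho(o)$ (which is $\ge 0=W$ on $\partial B_\rho(o)$) gives $u(x,t_0+t)\ge v_\rho(x)\,(t+s)^{-\frac1{m-1}}$ there. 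Letting $t\to+\infty$ yields $\liminf_{t\to+\infty}t^{\frac1{m-1}}u(x,t)\ge v_\rho(x)$ for $x\in B_\rho(o)$, and then $\rho\to+\infty$ gives the pointwise bound $\liminf_{t\to+\infty}t^{\frac1{m-1}}u(x,t)\ge v(x)$.

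It remains to upgrade this to uniform convergence. Given $\varepsilon>0$, by \eqref{eq:elliptic:above} choose $\rho_1$ with $v<\varepsilon$ outside $B_{\rho_1}(o)$; there \eqref{universal} gives $0\le t^{\frac1{m-1}}u(\cdot,t)\le v<\varepsilon$, hence $|t^{\frac1{m-1}}u(\cdot,t)-v|<\varepsilon$ for every $t$. On the compact set $\overline{B_{\rho_1}(o)}$ pick $\rho>\rho_1$ with $v-v_\rho<\varepsilon/2$ uniformly (Dini); evaluated at global time $t>t_0(\rho)$, the bound of the previous paragraph reads $t^{\frac1{m-1}}u(x,t)\ge v_\rho(x)\,\big(t/(t-t_0(\rho)+s(\rho))\big)^{\frac1{m-1}}$, and since the last factor tends to $1$ we obtain $t^{\frac1{m-1}}u(x,t)>v_\rho(x)-\varepsilon/2>v(x)-\varepsilon$ on $\overline{B_{\rho_1}(o)}$ for $t$ large; together with \eqref{universal} this gives $\|t^{\frac1{m-1}}u(\cdot,t)-v\|_{L^\infty(M)}<\varepsilon$ for $t$ large, i.e.\ \eqref{convergence}. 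The main obstacle I anticipate is to make the two comparison steps rigorous at the level of \emph{distributional} sub/supersolutions on the balls $B_R(o)$, and in particular to handle in the first step the supersolution $\mathcal{U}$, which blows up as $t\downarrow 0$ (whence the auxiliary time $\tau$); a second point requiring care is the eventual positivity of $u$ on every fixed ball, where one must invoke (or re-derive) the finite-propagation-with-unbounded-support behaviour of the PME on $M$ from \cite{GMV}. Everything else — parabolic comparison on bounded domains, continuity of the solutions, and the monotone/Dini convergence of the $v_\rho$ to the minimal $v$ — is standard.
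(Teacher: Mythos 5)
Your proposal is correct, and while the universal bound is obtained essentially as in the paper, the convergence part follows a genuinely different route. For \eqref{universal} the paper does the same thing you do: it compares the Dirichlet approximations $u_R$ of \eqref{pme-balls} with the exact separable solution built from $v$, using that $v$ has a positive infimum on each ball (the paper writes the barrier as $v(x)(t+t_0)^{-1/(m-1)}$ with $u_0\le v\,t_0^{-1/(m-1)}$ on $B_R(o)$ instead of shifting the initial time to $\tau$, which is the same device), and \eqref{universal1} follows from the boundedness of $v$. For \eqref{convergence}, however, the paper rescales, setting $U(x,\tau)=t^{1/(m-1)}u(x,t)$ with $t=e^\tau$, invokes the B\'enilan--Crandall inequality $u_t\ge -u/((m-1)t)$ to get $U_\tau\ge 0$, identifies the monotone limit $\hat v$ as a very weak solution of \eqref{eq:elliptic} squeezed between $0$ and $v$, concludes $\hat v=v$ by minimality, and then upgrades pointwise to locally uniform convergence via interior H\"older estimates and Ascoli--Arzel\`a, globalizing with the decay \eqref{eq:elliptic:above}. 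You instead prove a matching lower bound by comparison from below with the separable solutions $v_\rho(x)(t+s)^{-1/(m-1)}$ of the Dirichlet problem on $B_\rho(o)$, after waiting until $u$ is bounded below by a positive constant on $\overline{B_\rho(o)}$, and conclude with Dini's theorem ($v_\rho\uparrow v$) and the decay of $v$; note that $v_\rho\uparrow v$ is precisely how the paper constructs $v$ (its $v_k$ in \eqref{elliptic-k}), so that step is immediate. Your route buys an explicit quantitative lower barrier and avoids both the B\'enilan--Crandall monotonicity and the regularity/compactness step; its cost is the extra ingredient you flag, namely that $u(\cdot,t)$ becomes uniformly positive on every fixed ball in finite time. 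This is a purely local ``diffusive'' fact, independent of the curvature hypotheses (it only uses the metric on a compact neighbourhood of the ball), and the paper itself invokes exactly this in the proof of Theorem \ref{thm:main:low } by quoting \cite{GMV}; for non-compactly-supported $u_0$ one can first replace $u_0$ by a compactly supported minorant and use monotonicity of the minimal solution with respect to the data. Once that is granted, the comparisons you perform (bounded cylinders, explicit separable solutions versus the continuous bounded solution $u$) are standard, so the argument goes through.
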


In the present framework, the above-constructed minimal solution turns out to be the \emph{unique} solution in a rather large class of solutions, as the next result shows.

\begin{thm}\label{uniqueness-parabolic}
Let $ M $ be a Cartan-Hadamard manifold of dimension $n\ge2$. Let a pole $ o \in M $ be fixed and $r=r(x):=\operatorname{d}(x,o)$. Then the minimal solution to \eqref{pme} corresponding to a nonnegative, bounded initial datum $u_0$ is unique in the class of bounded solutions $u$ satisfying, for all $T,\epsilon$ such that $T>\epsilon>0$, the condition
\begin{equation}\label{cond-inf}
\lim_{r(x)\to+\infty}\int_\epsilon^T u(x,t)^{m}\,dt=0 \, .
\end{equation}
More generally, the comparison principle holds in the following form. Let $ \overline{u} $ be a bounded supersolution of \eqref{pme} and $ \underline{u} $ be a bounded subsolution of \eqref{pme} corresponding to the bounded initial data $\overline{u}_0$ and $\underline{u}_0$, respectively, with $ \overline{u}_0 \ge \underline{u}_0 $. Assume moreover that $ \overline{u} $ and $ \underline{u} $ satisfy
\begin{equation}\label{cond-inf-bis}
\liminf_{r(x) \to +\infty}  \int_\epsilon^T  \left[ \overline{u}(x,t)^m - \underline{u}(x,t)^m \right] dt \ge 0
\end{equation}
for all $T,\epsilon$ such that $T>\epsilon>0$. Then $ \overline{u} \ge \underline{u} $ a.e.~in $ M \times \mathbb{R}^+ $.
\end{thm}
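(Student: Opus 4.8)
The plan is to prove the comparison principle by the duality method of \cite{BCP,Pierre}, adapted to the non-compact manifold by localizing on geodesic balls and exploiting the Dirichlet Green's function of $ B_R(o) $; the uniqueness statement then follows by applying the comparison principle twice, with the minimal solution playing in turn the role of $ \overline u $ and of $ \underline u $. So let $ \overline u $ be a bounded supersolution and $ \underline u $ a bounded subsolution with $ \overline u_0 \ge \underline u_0 $ and satisfying \eqref{cond-inf-bis}; set $ w:=\underline u-\overline u $, $ z:=\underline u^m-\overline u^m $ (powers read as $ |\cdot|^{m-1}(\cdot) $ when necessary). By monotonicity of $ s\mapsto|s|^{m-1}s $ one has $ z=a\,w $ with $ 0\le a\le L $, $ L $ depending only on $ m $ and the $ L^\infty $ norms, where $ a:=z/w $ where $ w\neq0 $ and $ a:=0 $ otherwise. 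Subtracting the weak inequalities defining sub- and supersolution and discarding the nonpositive initial term $ \int_M(\underline u_0-\overline u_0)\,\varphi(\cdot,0)\,d\mu $, one obtains, for every admissible (i.e.\ legitimately usable in the weak formulation) $ \varphi\ge0 $ with $ \varphi(\cdot,T)=0 $,
\[
-\int_0^T\!\!\int_M w\,\varphi_t\,d\mu\,dt\;\le\;\int_0^T\!\!\int_M z\,\Delta\varphi\,d\mu\,dt\,.
\]
Hence, could one solve $ -\varphi_t-a\,\Delta\varphi=\chi $ with $ \varphi\ge0 $, $ \varphi(\cdot,T)=0 $ and $ \varphi $ admissible, for a given $ \chi\in C^\infty_c(M\times(0,T)) $, $ \chi\ge0 $, one would get $ \int_0^T\!\!\int_M w\,\chi=-\int_0^T\!\!\int_M w\,\varphi_t-\int_0^T\!\!\int_M z\,\Delta\varphi\le0 $, and arbitrariness of $ \chi $ would yield $ w\le0 $, i.e.\ the claim. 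The defects of this formal scheme are the degeneracy of $ a $ (the dual backward equation is degenerate parabolic) and the non-compactness of $ M $ (so $ \varphi $ need not have compact support).

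Both are dealt with simultaneously. Fix $ \rho_0 $ with $ \operatorname{supp}\chi\Subset B_{\rho_0}(o) $ and $ R>\rho_0 $, replace $ a $ by $ a_n:=\max(a,1/n) $ (so $ a\le a_n\le a+1/n $ and $ 1/n\le a_n\le L+1 $), and, after a harmless further smoothing which does not affect the estimates, let $ \varphi_{n,R} $ solve the non-degenerate backward problem $ -\partial_t\varphi_{n,R}=a_n\Delta\varphi_{n,R}+\chi $ in $ B_R(o)\times(0,T) $, $ \varphi_{n,R}=0 $ on $ \partial B_R(o)\times(0,T) $, $ \varphi_{n,R}(\cdot,T)=0 $; by the maximum principle $ \varphi_{n,R}\ge0 $ and $ \|\varphi_{n,R}\|_\infty\le T\|\chi\|_\infty $. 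Testing the equation with $ \Delta\varphi_{n,R} $ (the spatial boundary terms vanish since $ \varphi_{n,R}\equiv0 $ on $ \partial B_R(o) $) gives the Pierre-type a priori bound
\[
\int_0^T\!\!\int_{B_R(o)}a_n\,(\Delta\varphi_{n,R})^2\,d\mu\,dt+\tfrac12\int_{B_R(o)}|\nabla\varphi_{n,R}(\cdot,0)|^2\,d\mu\;\le\;T\|\chi\|_\infty\|\Delta\chi\|_{L^1(M)}=:C(\chi,T)\,,
\]
uniform in $ n $ and $ R $. Extending $ \varphi_{n,R} $ by zero outside $ B_R(o) $ — its distributional Laplacian then acquires the nonnegative surface measure $ (-\partial_\nu\varphi_{n,R})\,dS $ on $ \partial B_R(o) $ — and plugging it into the weak inequality, using $ -\partial_t\varphi_{n,R}=a_n\Delta\varphi_{n,R}+\chi $ inside $ B_R(o) $ together with $ z-a_n w=(a-a_n)w $, one arrives, for a.e.\ $ R>\rho_0 $, at
\[
\int_0^T\!\!\int_M w\,\chi\,d\mu\,dt\;\le\;\underbrace{\int_0^T\!\!\int_{B_R(o)}(a-a_n)\,w\,\Delta\varphi_{n,R}\,d\mu\,dt}_{E_1(n,R)}\;+\;\underbrace{\int_0^T\!\!\int_{\partial B_R(o)}z\,(-\partial_\nu\varphi_{n,R})\,dS\,dt}_{E_2(n,R)}\,.
\]
Since $ (a-a_n)^2/a_n\le1/n $, Cauchy--Schwarz and the a priori bound give $ |E_1(n,R)|\le\|w\|_\infty\,C(\chi,T)^{1/2}\,(|B_R(o)|\,T/n)^{1/2}\to0 $ as $ n\to\infty $, for $ R $ fixed.

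The crux is the boundary flux $ E_2(n,R) $. As $ \varphi_{n,R}\ge0 $ vanishes on $ \partial B_R(o) $, $ -\partial_\nu\varphi_{n,R}\ge0 $ there. Let $ \Phi_R $ be the harmonic capacitary potential of $ \overline{B_{\rho_0}(o)} $ in $ B_R(o) $ with boundary values $ T\|\chi\|_\infty $ on $ \partial B_{\rho_0}(o) $ and $ 0 $ on $ \partial B_R(o) $ — this is where the Dirichlet Green's function of the geodesic ball enters essentially. Then $ \Phi_R\ge0 $ is a stationary supersolution of the dual operator in $ (B_R(o)\setminus\overline{B_{\rho_0}(o)})\times(0,T) $ dominating $ \varphi_{n,R} $ on the parabolic boundary, so $ \varphi_{n,R}\le\Phi_R $ there, whence $ 0\le-\partial_\nu\varphi_{n,R}(\cdot,t)\le-\partial_\nu\Phi_R $ on $ \partial B_R(o) $ for all $ t $. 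Therefore
\[
E_2(n,R)\;\le\;\int_{\partial B_R(o)}(-\partial_\nu\Phi_R)\left(\int_0^T\big[\underline u^m-\overline u^m\big]_+\,dt\right)dS\;\le\;\left\|\int_0^T\big[\underline u^m-\overline u^m\big]_+\,dt\right\|_{L^\infty(\partial B_R(o))}\!\!\int_{\partial B_R(o)}(-\partial_\nu\Phi_R)\,dS\,.
\]
The total flux equals $ T\|\chi\|_\infty\operatorname{cap}(\overline{B_{\rho_0}(o)},B_R(o)) $, which stays bounded as $ R\to\infty $: indeed \eqref{hp-sect} forces, via the Laplacian comparison \eqref{comp1}, a super-exponential volume growth, so $ M $ is non-parabolic and $ \operatorname{cap}(\overline{B_{\rho_0}(o)},B_R(o))\downarrow\operatorname{cap}(\overline{B_{\rho_0}(o)},M)<\infty $. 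The other factor tends to $ 0 $ as $ R\to\infty $ by \eqref{cond-inf-bis}, after writing $ \int_0^T=\int_0^\epsilon+\int_\epsilon^T $ with $ \int_0^\epsilon[\,\cdot\,]_+\,dt\le C_0\,\epsilon $, $ \epsilon $ arbitrarily small. Letting $ n\to\infty $ and then $ R\to\infty $ in the preceding display yields $ \int_0^T\!\!\int_M w\,\chi\le0 $ for all admissible $ \chi\ge0 $, hence $ w\le0 $ a.e.\ by a standard density argument (taking $ \chi $ to approximate indicators of superlevel sets of $ w $), i.e.\ $ \overline u\ge\underline u $. For the uniqueness part: if $ u $ is bounded with $ \lim_{r\to\infty}\int_\epsilon^T u^m=0 $ and $ u_{\min} $ is the minimal solution, then $ 0\le u_{\min}\le u $ forces $ \int_\epsilon^T|u^m-u_{\min}^m|\,dt\to0 $ at infinity, so \eqref{cond-inf-bis} holds for $ (u,u_{\min}) $ and for $ (u_{\min},u) $, and the comparison principle gives $ u=u_{\min} $.

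I expect the main obstacle to be precisely the analysis of $ E_2(n,R) $, on two fronts. First, one must set up the capacitary comparison and the uniform flux bound from the Dirichlet Green's function of $ B_R(o) $ and orchestrate the two limits correctly — $ n\to\infty $ first (to kill the degeneracy error $ E_1 $), then $ R\to\infty $ (to push the flux support to infinity) — which forces the a priori bound $ C(\chi,T) $ to be uniform in both parameters. Second, and more subtly, the condition \eqref{cond-inf-bis} is only one-sided, so in the general comparison form the estimate of $ E_2 $ must be arranged so that only the favourably-signed part of $ z $ on $ \partial B_R(o) $ effectively contributes; in the uniqueness setting this is automatic, as $ \int_\epsilon^T|u^m-u_{\min}^m|\,dt $ itself decays. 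The remaining ingredients — solvability and parabolic regularity of the non-degenerate dual problem (geodesic spheres are smooth on a Cartan–Hadamard manifold), admissibility of the merely Lipschitz extension of $ \varphi_{n,R} $ as a test function, and deducing non-parabolicity of $ M $ from \eqref{hp-sect} — are routine.
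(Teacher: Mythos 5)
Your overall strategy is the same as the paper's: the Pierre--B\'enilan--Crandall duality method localized on geodesic balls, approximation $a_n$ of the degenerate coefficient with the a priori bound obtained by testing the dual equation with $\Delta\varphi_{n,R}$, and control of the boundary flux on $\partial B_R(o)$ by comparison with a harmonic barrier tied to the Dirichlet problem on the ball (you use a source term $\chi$ and a capacitary potential $\Phi_R$, the paper uses a final datum $\omega$ and the barrier $\lambda\,G_R(\cdot,o)$ with the exact flux identity \eqref{eq:green-1}; these are minor variants). Your reduction of the uniqueness statement to two applications of comparison with the minimal solution is also the paper's. However, two points need attention. First, you justify boundedness of the total flux by invoking \eqref{hp-sect} and non-parabolicity of $M$: Theorem \ref{uniqueness-parabolic} carries \emph{no} curvature hypothesis, and the paper explicitly stresses that non-parabolicity is not needed (the flux of $\lambda G_R$ through $S_R$ is exactly $\lambda$ by \eqref{eq:green-1}). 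Your argument is repairable without curvature — $R\mapsto\operatorname{cap}\bigl(\overline{B_{\rho_0}(o)},B_R(o)\bigr)$ is nonincreasing in $R$, hence bounded by its value at $R=\rho_0+1$ — but as written you import a hypothesis the theorem does not have.

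Second, and this is the genuine gap you yourself flag but do not close: you bound
\[
E_2(n,R)\le\Bigl\|\int_0^T\bigl[\underline u^m-\overline u^m\bigr]_+\,dt\Bigr\|_{L^\infty(\partial B_R(o))}\int_{\partial B_R(o)}(-\partial_\nu\Phi_R)\,dS
\]
and then claim the first factor tends to zero ``by \eqref{cond-inf-bis}''. But \eqref{cond-inf-bis} only controls the \emph{signed} time integral $\int_\epsilon^T[\overline u^m-\underline u^m]\,dt\ge-\alpha_\epsilon(R)$; it gives no control of the positive part $\int_\epsilon^T[\underline u^m-\overline u^m]_+\,dt$ when the integrand changes sign in $t$, so for the general comparison statement this step does not follow. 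In the uniqueness application it is harmless, since there $u^m-u_{\min}^m\ge0$ pointwise and the signed and positive-part integrals coincide; but the comparison principle as stated requires the estimate to be run with the signed quantity, which is what the paper does in passing from \eqref{eq:est-liminf}--\eqref{eq:est-liminf-2} to the lower bound \eqref{eq:est-comp-1} on $J_n(R)$, together with the uniform flux bound, before sending $n\to\infty$, $R\to\infty$, $\epsilon\to0$. You should either rework the $E_2$ estimate so that only the signed time integral of $\underline u^m-\overline u^m$ enters (as in the paper), or make explicit the additional argument that lets you pass from the one-sided hypothesis to the positive-part bound; as the proposal stands, the general comparison claim is not proved.
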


\begin{oss}\rm
Condition \eqref{cond-inf} is crucial for uniqueness. Indeed, it is already known that uniqueness does not hold e.g.~in the class of merely bounded solutions, see \cite[Remark 3.12 and Theorem 3.8]{Pu} and \cite[Remark 2.4]{GMP}. Note that this fact also follows at once from our results, by considering the minimal solution to \eqref{pme} corresponding to $u_0\equiv1$, which cannot coincide with the constant solution in view of the absolute bound \eqref{universal1}. We also remark that our comparison principle bears, as a straightforward corollary, uniqueness results with additional conditions at infinity in the spirit e.g.~of \cite{GMPc,KP}. As concerns analogous problems in the context of weighted Euclidean spaces, we refer the reader to \cite{EK} for similar techniques in exterior domains of Euclidean space with a weight or to \cite{KT, KP} and references quoted therein for other types of weighted Euclidean problems.
\end{oss}

\begin{oss}\rm
Our main interest here is in nonnegative solutions, so we prefer to write our statements focusing on such solutions. Nevertheless, we observe that, by using both $ v/t^{1/(m-1)} $ and $ -v/t^{1/(m-1)}$ as barriers, it is not difficult to show that the universal bounds \eqref{universal} and \eqref{universal1} in fact also hold for sign-changing solutions, up to replacing $ u $ with $|u|$. Therefore, the same uniqueness result as in Theorem \ref{uniqueness-parabolic} also holds for bounded and sign-changing solutions.
\end{oss}

We can also prove explicit space-time upper and lower estimates for solutions to \eqref{pme} corresponding to compactly supported initial data, which in particular give information on the speed of propagation of the support.

\begin{thm}[Upper Estimates]\label{thm:main}
Let $ M $ be a Cartan-Hadamard manifold of dimension $n\ge2$. Let a pole $ o \in M $ be fixed and $r=r(x):=\operatorname{d}(x,o)$. Suppose that the upper curvature bound \eqref{hp-sect} holds for some $\mu_1>1$ and $Q_1, R>0$. Let $u$ be the minimal solution to \eqref{pme} corresponding to a nonnegative, bounded and compactly supported initial datum $ u_0$. Then the following pointwise estimate holds:
\begin{equation}\label{upper1}
u(x,t) \le \frac{C_1}{(t+t_1)^{\frac1{m-1}}}\left[\frac1{(r+r_1)^{\mu_1-1}} -
\frac{\gamma_1}{[\log(t+t_1)]^{\frac{\mu_1-1}{\mu_1+1}}}\right]^{\frac1{m-1}}_+
\end{equation}
for a.e.~$x \in M $ and $t \ge 0$, where $ C_1 , \gamma_1 , r_1 , t_1 $ are positive constants depending on $ n,m,\mu_1,Q_1,R,u_0 $. In particular, the estimate
\begin{equation}\label{bound-smooth-infty}
\overline{\mathsf{R}}(t) \le \kappa_1 \, (\log t)^{\frac{1}{1+\mu_1}}
\end{equation}
holds for a suitable $ \kappa_1 > 0 $ and all $t$ large enough, where $ \overline{\mathsf{R}}(t) $ stands for the radius of the smallest ball centered at $o$ that contains the support of the solution at time $t$.
\end{thm}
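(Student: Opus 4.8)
The plan is to prove \eqref{upper1} by comparison with an explicit supersolution built on an exterior region, complemented by the a priori bounds inside a ball, and then to read \eqref{bound-smooth-infty} off the location of its free boundary.

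The first ingredient is Laplacian comparison. From \eqref{hp-sect} and \eqref{comp1}, comparing with a model function $\psi$ whose curvature matches $-Q_1 r^{2\mu_1}$ outside $B_R(o)$ and which obeys the classical asymptotics $\psi'(r)/\psi(r)\sim\sqrt{Q_1}\,r^{\mu_1}$, one gets: for every $\varepsilon\in(0,1)$ there is $R_\varepsilon\ge R$ with $\mathsf{m}(r,\theta)\ge(1-\varepsilon)(n-1)\sqrt{Q_1}\,r^{\mu_1}$ for all $r\ge R_\varepsilon$, uniformly in $\theta\in\mathbb S^{n-1}$. Since the barrier below is radial and radially decreasing, the polar identity $\Delta W=W''+\mathsf{m}(r,\theta)W'$ together with $W'\le 0$ then reduces everything to the one–dimensional operator $W\mapsto W''+(1-\varepsilon)(n-1)\sqrt{Q_1}\,r^{\mu_1}\,W'$ on the exterior region.

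Next, writing $b(t):=\gamma_1\,[\log(t+t_1)]^{-(\mu_1-1)/(\mu_1+1)}$, I would set
\[
\Phi(r,t):=\left[\frac1{(r+r_1)^{\mu_1-1}}-b(t)\right]_+,\qquad
\overline U(x,t):=\frac{C_1}{(t+t_1)^{\frac1{m-1}}}\,\Phi(r,t)^{\frac1{m-1}},
\]
and verify that, for a suitable choice of the constants, $\overline U$ is a distributional supersolution of \eqref{pme} on the exterior cylinder $(M\setminus B_{R_0}(o))\times\mathbb R^+$, with $R_0\ge R_\varepsilon$ so large that the support of $u_0$ lies in $B_{R_0}(o)$ (take e.g.\ $r_1:=R_0$). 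As usual for porous–medium barriers it is enough to check $\partial_t\overline U\ge\Delta\overline U^m$ classically where $\Phi>0$: the behaviour across the free boundary $\{\Phi=0\}$ is harmless, since passing to the pressure $P:=\tfrac m{m-1}\overline U^{m-1}=\tfrac m{m-1}\tfrac{C_1^{m-1}}{t+t_1}\Phi$ — continuous, with a concave corner there multiplied by the vanishing factor $P$ in $P_t=(m-1)P\,\Delta P+|\nabla P|^2$ — shows that corner contributes nothing. In this equation, after Step 1 the leading right–hand term is the drift $(m-1)\,\mathsf{m}\,P\,P_r$; since $\partial_r\Phi=-(\mu_1-1)(r+r_1)^{-\mu_1}$, the growth factor $r^{\mu_1}$ in $\mathsf{m}$ is exactly cancelled by $(r+r_1)^{-\mu_1}$ — this is what forces the exponents $\mu_1-1$ and $(\mu_1-1)/(\mu_1+1)$ — and the inequality splits into two regimes. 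Away from $\{\Phi=0\}$ it reduces, as $R_0\to\infty$ and $\varepsilon\to0$, to $C_1^{m-1}\,m(n-1)(\mu_1-1)\sqrt{Q_1}\ge1+o(1)$, so one fixes $C_1$ large, with threshold $[m(n-1)(\mu_1-1)\sqrt{Q_1}]^{-1/(m-1)}$, in agreement with \eqref{eq:elliptic:above}; near $\{\Phi=0\}$ all terms carrying the factor $P$ become negligible and the inequality becomes $(t+t_1)(-b'(t))\ge\tfrac m{m-1}C_1^{m-1}(\partial_r\Phi)^2$ at the free boundary, whose two sides carry exactly the same factor $[\log(t+t_1)]^{-2\mu_1/(\mu_1+1)}$, hence it amounts to $\gamma_1^{(\mu_1+1)/(\mu_1-1)}$ being small (depending on $C_1$). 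Finally $t_1$ is taken large enough that $b(0)<\tfrac12(R_0+r_1)^{-(\mu_1-1)}$, so that $\Phi(R_0,\cdot)\ge\tfrac12(R_0+r_1)^{-(\mu_1-1)}>0$ on $[0,+\infty)$; one checks that these choices close without circularity, since enlarging $C_1$ only shrinks the admissible range for $\gamma_1$, which in turn relaxes the lower bound required of $t_1$.

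It then remains to compare $\overline U$ with $u$. On $\{t=0\}$ one has $\overline U\ge 0=u_0$ for $r\ge R_0$; on the lateral boundary $\{r=R_0\}\times\mathbb R^+$ one uses the a priori bound $\|u(t)\|_{L^\infty(M)}\le\min\{\|u_0\|_{L^\infty(M)},\,C\,t^{-1/(m-1)}\}$ from \eqref{universal1}, together with $\overline U(R_0,t)\ge C_1\,[\tfrac12(R_0+r_1)^{-(\mu_1-1)}]^{1/(m-1)}(\max\{2t_1,2t\})^{-1/(m-1)}$, to get $\overline U\ge u$ there after a further enlargement of $C_1$. Both functions vanish at spatial infinity — $u$ by \eqref{universal}, since $v\to0$, and $\overline U$ because $\Phi(\cdot,t)$ is compactly supported for each $t$ — so the comparison principle on the exterior cylinder (the exterior–domain analogue of Theorem~\ref{uniqueness-parabolic}, or simply the limit $R'\to+\infty$ of comparison on the annuli $B_{R'}(o)\setminus B_{R_0}(o)$, the boundary term on $\partial B_{R'}(o)$ vanishing once $\overline U\equiv0$ there) yields $u\le\overline U$ on $(M\setminus B_{R_0}(o))\times\mathbb R^+$. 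For $r\le R_0$, the right–hand side of \eqref{upper1} is $\ge$ its value at $r=R_0$, which is exactly $\overline U(R_0,t)\ge\|u(t)\|_{L^\infty(M)}\ge u(x,t)$; hence \eqref{upper1} holds on all of $M$. Finally, \eqref{upper1} forces $u(x,t)=0$ as soon as $(r+r_1)^{\mu_1-1}\ge\gamma_1^{-1}[\log(t+t_1)]^{(\mu_1-1)/(\mu_1+1)}$, i.e.\ as soon as $r\ge\gamma_1^{-1/(\mu_1-1)}[\log(t+t_1)]^{1/(\mu_1+1)}-r_1$, which gives \eqref{bound-smooth-infty} upon using $\log(t+t_1)\le2\log t$ for $t$ large and absorbing the constants into $\kappa_1$.

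The main obstacle is the supersolution verification of Step 2: carrying the pointwise differential inequality uniformly up to the free boundary, where the $|\nabla\overline U^m|^2$–type term does not vanish and pins down both the precise power of $\log$ in $b(t)$ and the smallness of $\gamma_1$, and keeping the hierarchy of parameter choices ($\varepsilon$ and $R_0$, then $C_1$, then $\gamma_1$, then $t_1$, then $C_1$ once more) internally consistent. A secondary technical point is the comparison on the unbounded exterior cylinder, handled through the annulus exhaustion by exploiting the compact spatial support of $\overline U$.
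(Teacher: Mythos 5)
Your proposal is correct and follows essentially the same route as the paper: the identical barrier \eqref{upper}, Laplacian comparison reducing to a radial drift inequality on an exterior region, the resulting conditions ($C_1$ large with the threshold of \eqref{eq:elliptic:above}, then $\gamma_1$ small depending on $C_1$ exactly as in \eqref{eq:upper-5}), the inner region handled via the universal bound from Theorem \ref{thm:conv}, and comparison concluded through the exhaustion defining the minimal solution. The only differences (pressure-equation bookkeeping with a two-regime split instead of the paper's monotonicity-in-$r$ reduction, and comparing outside $B_{R_0}$ rather than $B_1$) are cosmetic.
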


We comment that well-known smoothing effects (see \cite{GM3}) for the  here considered evolution show that \eqref{upper1} also holds for integrable and compactly supported initial data provided $t$ is large enough, since the solution becomes instantaneously bounded and stays compactly supported. Note also that the bound in \eqref{bound-smooth-infty} is formally the same as the one given in \eqref{radius} for the range $\mu\in(-1,1]$.

\begin{thm}[Lower Estimates]\label{thm:main:low }
Let $ M $ be a Cartan-Hadamard manifold of dimension $n\ge2$. Let a pole $ o \in M $ be fixed and $r=r(x):=\operatorname{d}(x,o)$. Suppose that the lower curvature bound \eqref{hp-ricc} holds for some $\mu_2>1$ and $Q_2, R>0$. Let $u$ be the minimal solution to \eqref{pme} corresponding to a nonnegative, nontrivial and compactly supported initial datum $ u_0 \in L^\infty(M) $. Then the following pointwise estimate holds:
\begin{equation}\label{lower1}
u(x,t) \ge \frac{C_2}{(t+t_2)^{\frac1{m-1}}}\left[\frac1{(r+r_2)^{\mu_2-1}}-
\frac{\gamma_2}{[\log(t+t_2)]^{\frac{\mu_2-1}{\mu_2+1}}}\right]^{\frac1{m-1}}_+
\end{equation}
for a.e.~$x \in M \setminus B_{1}(o)$ and $t \ge 0 $, where $ C_2 , \gamma_2 , r_2 , t_2 $ are positive constants depending on $ n,m,\mu_2,Q_2,R,u_0 $ and (locally) on the metric of $M$. In particular, the estimates
\begin{equation}\label{bound-smooth-infty-lower}
\| u(t) \|_\infty \ge \frac{K}{t^{\frac1{m-1}}} \quad \text{and} \quad \underline{\mathsf{R}}(t) \ge \kappa_2 \, (\log t)^{\frac{1}{1+\mu_2}}
\end{equation}
hold for suitable $ K , \kappa_2 >0 $ and all $t$ large enough, where $ \underline{\mathsf{R}}(t) $ stands for the radius of the largest ball that is contained in the support of the solution at time $t$.
\end{thm}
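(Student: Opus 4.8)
The plan is to follow the barrier method of \cite{GMV}: build an explicit subsolution of \eqref{pme} with the algebraic shape of the right-hand side of \eqref{lower1}, and conclude by comparison. The first move is to pass to a model manifold. By the Laplacian comparison recalled in Section~\ref{notation}, the radial Ricci lower bound \eqref{hp-ricc} yields $\mathsf{m}(r,\theta)\le (n-1)\,\psi'(r)/\psi(r)$, where $\psi$ is a model function with $\psi''/\psi=Q_2\,r^{2\mu_2}$ for $r\ge R$ and, on $[0,R]$, chosen so that $\psi''/\psi\ge -\mathrm{Ric}_o/(n-1)$ there --- possible because $\overline{B_R(o)}$ is compact, which is the source of the dependence of the constants below ``locally on the metric of $M$''. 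Elementary ODE estimates for $\psi'/\psi$ give $\psi'(r)/\psi(r)\sim\sqrt{Q_2}\,r^{\mu_2}$ and $\psi(r)=\exp\!\big(\tfrac{\sqrt{Q_2}}{\mu_2+1}\,r^{\mu_2+1}(1+o(1))\big)$ as $r\to+\infty$. Since, for any radial $f=f(r)$ with $f'\le0$, one has $\Delta f=f''+\mathsf{m}(r,\theta)f'\ge f''+(n-1)\tfrac{\psi'}{\psi}f'=\Delta_\psi f$, where $\Delta_\psi$ denotes the Laplace--Beltrami operator of $M_\psi$, it is enough to produce a radial subsolution of \eqref{pme} on $M_\psi$ whose $m$-th power is nonincreasing in $r$, in the region $r\ge1$.

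\textbf{The barrier.} We look for a subsolution of the form
\[
w(x,t)=\frac{C_2}{(t+t_2)^{\frac1{m-1}}}\left[\frac{1}{(r+r_2)^{\mu_2-1}}-\frac{\gamma_2}{[\log(t+t_2)]^{\frac{\mu_2-1}{\mu_2+1}}}\right]^{\frac1{m-1}}_+ ,
\]
writing $G(r,t)$ for the quantity in brackets, so that $w^m=C_2^m(t+t_2)^{-\frac m{m-1}}(G)_+^{\frac m{m-1}}$ and $(w^m)_r<0$ on $\{G>0\}$, as required. Inserting $w$ into $w_t\le\Delta_\psi w^m$ on the time-dependent set $\{G>0\}$ and computing $w_t$, $(w^m)_r$, $(w^m)_{rr}$, all the explicit powers of $r$ and of $\log(t+t_2)$ match on the two sides for the exponents chosen in the ansatz. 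Two regimes must be handled. Near the free boundary $\{G=0\}$, where $G\to0^{+}$ and both sides are of order $G^{(2-m)/(m-1)}$, the inequality reduces --- after using $(r+r_2)^{-(\mu_2-1)}=\gamma_2\,[\log(t+t_2)]^{-(\mu_2-1)/(\mu_2+1)}$ on $\{G=0\}$ --- to an algebraic condition of the shape $C_2^{m-1}\gamma_2^{(\mu_2+1)/(\mu_2-1)}\ge \tfrac{m-1}{m(\mu_2^2-1)}$; away from the free boundary, $w_t<0$ for $t$ large and the only dangerous term is $(n-1)\tfrac{\psi'}{\psi}(w^m)_r<0$, which, using $r^{\mu_2}(r+r_2)^{-\mu_2}\le1$, is controlled by $|w_t|$ once $C_2$ is small. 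One then checks that $C_2,\gamma_2,r_2,t_2$ can be chosen (taking $\gamma_2$ large, $C_2$ small with $C_2^{m-1}\gamma_2^{(\mu_2+1)/(\mu_2-1)}$ above the threshold, and finally $r_2$ large and $t_2$ suitable) so that $w$ is a genuine subsolution of \eqref{pme} on $(M\setminus B_1(o))\times\mathbb{R}^+$, compactly supported in space for each $t$, with $\{w(\cdot,0)>0\}\subseteq B_1(o)$. The ball $B_1(o)$ must be excised: a nonincreasing radial profile with $(w^m)_r/r$ bounded near $o$ is forced to be concave there, so that $\Delta_\psi w^m<0$ exactly where $w_t>0$, and no barrier of this shape is a subsolution across the pole.

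\textbf{Matching the data, comparison, and consequences.} Since $u_0$ is nontrivial and compactly supported, finite speed of propagation together with the retention property and the interior positivity of the PME (see \cite{V07}) give a time $t_0>0$ such that $u(\cdot,t_0)>0$ on $\overline{B_2(o)}$. Comparing on $B_2(o)$ with the solution of the homogeneous Dirichlet problem there issued from a nontrivial datum below $u(\cdot,t_0)|_{B_2(o)}$, and using the classical large-time behaviour of the PME on a fixed bounded (Riemannian) domain, one obtains $u(x,t)\ge c_2\,t^{-1/(m-1)}$ for $x\in\overline{B_1(o)}$ and $t\ge t_1$ (in fact uniformly also for the Dirichlet approximations $u_\ell$ on $B_\ell(o)$, $\ell$ large), with $c_2,t_1>0$ depending only on the metric near $o$. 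Now fix $r_2$ so large that $C_2(1+r_2)^{-(\mu_2-1)/(m-1)}\le c_2$ and take $t_2\ge t_1$; then $w(\cdot,0)\equiv0\le u(\cdot,t_1)$ on $M\setminus B_1(o)$ and $w(x,t)\le u(x,t+t_1)$ on $\partial B_1(o)\times\mathbb{R}^+$. A comparison principle on the exterior domain $(M\setminus B_1(o))\times\mathbb{R}^+$ --- obtained by exhausting with the annuli $B_\ell(o)\setminus B_1(o)$ and the approximations $u_\ell$, or by a localised version of the duality argument behind Theorem~\ref{uniqueness-parabolic}, the behaviour at spatial infinity being trivial because $w$ is compactly supported --- then gives $u(x,t)\ge w(x,t-t_1)$ for $x\in M\setminus B_1(o)$ and $t\ge t_1$, and relabelling $C_2,\gamma_2,r_2,t_2$ produces \eqref{lower1} for all $t\ge0$. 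Finally, \eqref{bound-smooth-infty-lower} follows by inspecting \eqref{lower1}: fixing $r=2$ and letting $t\to+\infty$, the bracket tends to $(2+r_2)^{-(\mu_2-1)}>0$, whence $\|u(t)\|_\infty\ge K\,t^{-1/(m-1)}$; and the positivity set of the right-hand side of \eqref{lower1} contains the ball centred at $o$ of radius $\rho(t)=\gamma_2^{-1/(\mu_2-1)}[\log(t+t_2)]^{1/(\mu_2+1)}-r_2\sim\kappa_2(\log t)^{1/(1+\mu_2)}$, giving the lower bound on $\underline{\mathsf{R}}(t)$.

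\textbf{Main difficulty.} The technical heart is the barrier construction: one must verify the subsolution inequality on the \emph{whole} time-dependent set $\{G>0\}\cap\{r\ge1\}$, not merely asymptotically, balancing the three terms of $\Delta_\psi w^m$ against $w_t$ both inside the degenerate free-boundary layer (where the balance is exact and requires the usual inner analysis near the interface) and in the bulk, and simultaneously fixing $C_2,\gamma_2,r_2,t_2$ so that the subsolution inequality, the boundary comparison on $\partial B_1(o)$ and the containment $\{w(\cdot,0)>0\}\subseteq B_1(o)$ hold together. A secondary, more structural, point is obtaining the clean lower bound $u\gtrsim t^{-1/(m-1)}$ near the pole: Theorem~\ref{thm:conv} would give it at once but is unavailable here, as it requires the upper sectional bound \eqref{hp-sect}, so one falls back on the Dirichlet theory on a fixed small ball.
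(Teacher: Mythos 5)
Your proposal is correct and follows essentially the same route as the paper: Laplacian comparison from the Ricci lower bound (via the model function of Lemma \ref{lem: ode-comparison-2}) reduces matters to a radial differential inequality for $r\ge1$, the same explicit barrier is verified to be a subsolution under a condition of the form $C^{m-1}\gamma^{(\mu_2+1)/(\mu_2-1)}\gtrsim 1$ with $C$ small, an inner bound $u\ge \mathcal{I}\,t^{-1/(m-1)}$ on $B_1(o)$ is obtained from positivity at a finite time plus a separable (Dirichlet-type) subsolution, and one concludes by comparison on the exterior region. The only cosmetic difference is that you time-shift $u$ and relabel constants, whereas the paper chooses $\gamma$ so large that the barrier vanishes identically outside $B_1(o)$ up to the time $T$ of inner positivity and compares directly.
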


We observe again that the above-mentioned smoothing effects for the equation at hand ensure that in fact \eqref{lower1} also holds e.g.~for nonnegative, nontrivial integrable initial data, provided $t$ is large enough.

\begin{oss}\rm
It is apparent that the upper and lower estimates stated above match if the corresponding upper and lower curvature bounds match, i.e.~if $\mu_1=\mu_2=\mu>1$. In such case, the resulting two-sided estimate falls within the so-called \emph{global Harnack principles}, following the terminology of \cite{DKV, BV, GMV}. Analogous global Harnack principles had been established in \cite{GMV} in the cases $\mu\in(-\infty,1]$.
\end{oss}

%\newpage
%%%%%%%%%%%%%%%%%%%%%%%%%%%%%%%%%%%%%%%%%%%%%%%%%%%%%%%%%%%%%%%%%%%%%%%%%%%%%%%%%%%%%%%%%%%%
\section{Proofs of the results on manifolds}\label{sect:proofs}

We start by stating two preliminary lemmas related to the Laplacian-comparison discussion of Section \ref{notation}, which will be very useful and whose proofs are completely analogous to the ones of Lemmas 4.1 and 4.2 in \cite{GMV}, respectively.
\begin{lem}\label{lem: ode-comparison}
Let $R,Q>0$ and $ \mu >1$ be fixed parameters. Let $ \psi $ be the solution to the following ODE:
\begin{equation}\label{ode-psi-upper-lemma}
\psi''(r) = w(r) \, \psi(r) \quad \forall r>0 \, , \quad \psi(0)=0  \, , \quad \psi'(0)=1   \, ,
\end{equation}
where
\begin{equation*}\label{ode-psi-upper-f-lemma}
w(r):=
\begin{cases}
0 & \forall r \in [0,R] \, , \\
\frac{Q \, (2R)^{2\mu}}{R} \, (r-R) & \forall r \in (R,2R] \, , \\
Q \, r^{2\mu} & \forall r > 2R \, .
\end{cases}
\end{equation*}
Then $ \psi \in C^2([0,+\infty)) $, $ \psi $ is positive on $ (0,+\infty) $ with $ \lim_{r \to +\infty} \psi(r)=+\infty $, $ \psi^\prime $ is positive on $ [0,+\infty) $ and there holds
\begin{equation}\label{ode-psi-estimate-fond-lemma}
\lim_{r \to +\infty} \frac{\psi^\prime(r)}{r^\mu \, \psi(r)} = \sqrt{Q} \, .
\end{equation}
\end{lem}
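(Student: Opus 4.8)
The plan is to read \eqref{ode-psi-upper-lemma} as a standard second-order linear ODE, obtain the qualitative properties by an elementary sign/monotonicity argument, and then derive \eqref{ode-psi-estimate-fond-lemma} by passing to the Riccati equation for the logarithmic derivative of $\psi$.

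First I would record the basic facts. The coefficient $w$ is continuous and nonnegative on $[0,+\infty)$, so \eqref{ode-psi-upper-lemma} has a unique maximal solution; on $[0,R]$ one has $w\equiv0$, whence $\psi(r)=r$ there, so in particular $\psi(R)=R>0$ and $\psi'(R)=1$. For $r>R$, as long as $\psi>0$ the equation forces $\psi''=w\,\psi\ge0$, so $\psi'$ is nondecreasing, hence $\psi'\ge\psi'(R)=1$; then $\psi$ is strictly increasing and stays $\ge r$, so it can neither vanish nor blow up in finite $r$. This gives global existence together with $\psi>0$ on $(0,+\infty)$, $\psi'\ge1$ on $[0,+\infty)$, and $\psi(r)\ge r$ so that $\lim_{r\to+\infty}\psi(r)=+\infty$; moreover $\psi\in C^2([0,+\infty))$ because $\psi''=w\,\psi$ with $w,\psi$ continuous.

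For the asymptotics I would set $v:=\psi'/\psi$, which is well defined, positive and $C^1$ on $(0,+\infty)$, and solves the Riccati equation $v'=w-v^2$, hence $v'=Q\,r^{2\mu}-v^2$ for $r>2R$. The expected balance is $v\sim\sqrt Q\,r^\mu$, for which the right-hand side is of the lower order $O(r^{\mu-1})$. To make this rigorous I would, for each $\varepsilon\in(0,\sqrt Q)$, use the barriers $v_\varepsilon^\pm(r):=(\sqrt Q\pm\varepsilon)\,r^\mu$. A short computation shows that $v_\varepsilon^+$ is a supersolution of $v'=Q r^{2\mu}-v^2$ for all $r>2R$, since $(v_\varepsilon^+)'-\bigl(Q r^{2\mu}-(v_\varepsilon^+)^2\bigr)=(\sqrt Q+\varepsilon)\mu\,r^{\mu-1}+(2\sqrt Q\varepsilon+\varepsilon^2)\,r^{2\mu}>0$; and that $v_\varepsilon^-$ is a subsolution for all $r\ge r_\varepsilon$, where $r_\varepsilon=r_\varepsilon(\varepsilon,Q,\mu)$ is the radius beyond which $(2\sqrt Q\varepsilon-\varepsilon^2)\,r^{2\mu}$ exceeds $(\sqrt Q-\varepsilon)\mu\,r^{\mu-1}$.

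The point I expect to be the main obstacle is that a priori we control $v$ at no particular radius, so a Riccati comparison cannot be started at a prescribed point. I would circumvent this by observing that $v$ cannot remain on the ``wrong'' side of a barrier along an entire half-line: if $v>v_\varepsilon^+$ on some $[r_0,+\infty)$ then $v'<-(2\sqrt Q\varepsilon+\varepsilon^2)\,r^{2\mu}<0$ would force $v(r)\to-\infty$, contradicting $v>0$; if $v<v_\varepsilon^-$ on some $[r_0,+\infty)$ then $v'>(2\sqrt Q\varepsilon-\varepsilon^2)\,r^{2\mu}$ would make $v$ grow like $r^{2\mu+1}$, contradicting $v<(\sqrt Q-\varepsilon)\,r^\mu$. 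Hence there are arbitrarily large radii at which $v\le v_\varepsilon^+$, respectively $v\ge v_\varepsilon^-$; starting at such a radius (chosen past $r_\varepsilon$ in the subsolution case) the elementary comparison for the Riccati equation — for $d$ equal to the barrier minus $v$, or to $v$ minus the barrier, one finds $d'+(v+\mathrm{barrier})\,d\ge0$, so $d\,\exp\!\bigl(\int(v+\mathrm{barrier})\bigr)$ is nondecreasing and $d\ge0$ propagates forward — then yields the inequality for all larger $r$. This gives $\sqrt Q-\varepsilon\le\liminf_{r\to+\infty}v(r)/r^\mu\le\limsup_{r\to+\infty}v(r)/r^\mu\le\sqrt Q+\varepsilon$, and letting $\varepsilon\downarrow0$ produces $\lim_{r\to+\infty}\psi'(r)/\bigl(r^\mu\psi(r)\bigr)=\sqrt Q$, which is exactly \eqref{ode-psi-estimate-fond-lemma}.
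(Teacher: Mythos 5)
Your proof is correct. Note that the paper does not actually prove this lemma: it only remarks that the proof is ``completely analogous'' to Lemmas 4.1 and 4.2 of \cite{GMV}, so your argument stands as a self-contained substitute, and it takes the natural route for such statements — the Riccati equation $v'=w-v^2$ for $v=\psi'/\psi$ together with the power barriers $(\sqrt{Q}\pm\varepsilon)r^\mu$. The genuinely delicate point, namely that no value of $v$ is known at a prescribed radius from which to start the comparison, is handled correctly by your observation that $v$ cannot remain strictly above $(\sqrt{Q}+\varepsilon)r^\mu$ on a half-line (else $v'\le -(2\sqrt{Q}\varepsilon+\varepsilon^2)r^{2\mu}$ forces $v\to-\infty$, contradicting $v>0$) nor strictly below $(\sqrt{Q}-\varepsilon)r^\mu$ (else $v$ grows like $r^{2\mu+1}$), after which the forward propagation of $d\ge0$ via the integrating factor $\exp\!\left(\int (v+\mathrm{barrier})\right)$ is sound, since $v$ and the barrier are continuous and positive. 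One small step is stated loosely: ``$\psi\ge r$, so it cannot blow up in finite $r$'' is not in itself a reason excluding finite-time blow-up; global existence follows instead from the linearity of the ODE with continuous coefficient $w$ (for instance, a Gr\"onwall bound on $\psi^2+(\psi')^2$ over compact intervals). This is a one-line repair, not a gap.
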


\begin{lem}\label{lem: ode-comparison-2}
Let $Q,D>0$ and $ \mu >1 $ be fixed parameters. Let $ \psi $ be the solution to the same ODE as in \eqref{ode-psi-upper-lemma}, with
\begin{equation*}\label{ode-psi-lower-f-lemma-bis}
w(r):= \max \left\{ D , Q \, r^{2\mu} \right\} .
\end{equation*}
Then $ \psi \in C^2([0,+\infty)) $, $ \psi $ is positive on $ (0,+\infty) $ with $ \lim_{r \to +\infty} \psi(r)=+\infty $, $ \psi^\prime $ is positive on $ [0,+\infty) $ and \eqref{ode-psi-estimate-fond-lemma} holds.
%\begin{equation}\label{ode-psi-estimate-fond-lemma-lower}
%\frac{\psi^\prime(r)}{\psi(r)} \sim \sqrt{Q_2} \, r^\mu \quad \textrm{as } r \to +\infty \, .
%\end{equation}
\end{lem}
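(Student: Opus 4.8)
The plan is to argue exactly as in the proof of \cite[Lemma 4.2]{GMV}; the only difference with respect to Lemma \ref{lem: ode-comparison} lies in the profile of $w$ near the origin, which is irrelevant for the behaviour at infinity. Set $\rho_0:=(D/Q)^{1/(2\mu)}$, so that $w$ is continuous, Lipschitz, nondecreasing, satisfies $w\ge D>0$ everywhere and $w(r)=Q\,r^{2\mu}$ for $r\ge\rho_0$. Since the equation in \eqref{ode-psi-upper-lemma} is linear with continuous coefficient $w$, it has a unique solution $\psi$, which is global by linearity and belongs to $C^2([0,+\infty))$: this is the first assertion.

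For positivity and monotonicity I would use a ``first zero'' argument. Near $r=0$ one has $\psi'(0)=1>0$, so $\psi>0$ on a right neighbourhood of $0$; if $\psi$ had a first zero at some $r_0>0$, then $\psi>0$ on $(0,r_0)$, hence $\psi''=w\,\psi>0$ there, so $\psi'$ is strictly increasing and $\psi'>\psi'(0)=1$ on $(0,r_0)$, whence $\psi$ is strictly increasing on $[0,r_0]$ — contradicting $\psi(r_0)=0=\psi(0)$. Thus $\psi>0$ on $(0,+\infty)$; consequently $\psi''>0$ on $(0,+\infty)$, so $\psi'$ is strictly increasing, $\psi'\ge 1>0$ on $[0,+\infty)$, and $\psi(r)=\int_0^r\psi'\ge r\to+\infty$. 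This settles the remaining elementary assertions.

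The core is the limit \eqref{ode-psi-estimate-fond-lemma}. I would pass to the logarithmic derivative $\phi:=\psi'/\psi$, which is positive on $(0,+\infty)$, blows up as $r\to0^+$, and solves the Riccati equation $\phi'=w-\phi^2$; for $r\ge\rho_0$ this reads $\phi'=Q\,r^{2\mu}-\phi^2$. Fix $\varepsilon\in(0,1)$ and set $h:=\phi-(1+\varepsilon)\sqrt{Q}\,r^\mu$ and $g:=\phi-(1-\varepsilon)\sqrt{Q}\,r^\mu$. A direct substitution (using $\phi>0$) shows that, for $r\ge\rho_0$, whenever $h(r)\ge0$ one has $\phi(r)^2\ge(1+\varepsilon)^2 Q\,r^{2\mu}$ and hence $h'(r)\le -(2\varepsilon+\varepsilon^2)Q\,r^{2\mu}-(1+\varepsilon)\sqrt{Q}\,\mu\,r^{\mu-1}<0$; similarly, for $r$ larger than a suitable $R_\varepsilon$ — here one uses $2\mu>\mu-1$ to dominate the term $(1-\varepsilon)\sqrt{Q}\,\mu\,r^{\mu-1}$ — whenever $g(r)\le0$ one has $g'(r)\ge (2\varepsilon-\varepsilon^2)Q\,r^{2\mu}-(1-\varepsilon)\sqrt{Q}\,\mu\,r^{\mu-1}>0$. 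From these two sign conditions I would deduce that $h$ cannot remain $\ge0$ on a half-line (otherwise $h'\to-\infty$, so $h\to-\infty$) and that, once $h<0$, it stays $<0$ (at a first up-crossing of the value $0$ the left derivative would be $\ge0$, contradicting the sign condition); hence $h<0$ eventually, that is $\limsup_{r\to+\infty}\phi(r)/(\sqrt{Q}\,r^\mu)\le 1+\varepsilon$. The symmetric reasoning applied to $g$ gives $\liminf_{r\to+\infty}\phi(r)/(\sqrt{Q}\,r^\mu)\ge 1-\varepsilon$. Letting $\varepsilon\to0^+$ yields $\lim_{r\to+\infty}\psi'(r)/(r^\mu\psi(r))=\sqrt{Q}$, i.e.\ \eqref{ode-psi-estimate-fond-lemma}.

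The only genuinely delicate point is this last Riccati comparison: because $\phi$ blows up at the origin there is no convenient point at which to initialise a comparison, so instead of a naive Gronwall estimate one has to exploit the sign of $h'$ (resp.\ $g'$) on the super/sub-level set $\{h\ge0\}$ (resp.\ $\{g\le0\}$) together with a short continuity argument. Alternatively, the upper bound can be obtained, as in \cite{GMV}, from the energy identity got by multiplying $\psi''=w\,\psi$ by $2\psi'$ and integrating by parts, using that $w$ is nondecreasing, which gives $\phi(r)^2\le w(r)+(\psi'(r_0))^2/\psi(r)^2$ and hence $\limsup_{r\to+\infty}\phi(r)^2/w(r)\le1$.
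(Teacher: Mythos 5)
Your proof is correct; the paper gives no details for this lemma (it simply defers to the proofs of Lemmas 4.1--4.2 in \cite{GMV}), and your argument supplies them properly: the first-zero argument yields positivity, convexity and divergence of $\psi$, and the sign analysis of $h$ and $g$ on the sets $\{h\ge 0\}$, $\{g\le 0\}$ for the Riccati equation $\phi'=w-\phi^2$, $\phi=\psi'/\psi$, correctly gives \eqref{ode-psi-estimate-fond-lemma} (the energy identity you mention is an equally valid route to the upper bound). Only a cosmetic quibble: $w(r)=\max\{D,Q\,r^{2\mu}\}$ is locally, not globally, Lipschitz, which is immaterial since continuity of $w$ already guarantees global existence, uniqueness and $C^2$ regularity for the linear ODE.
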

%\begin{lem}\label{lem: ode-comparison-2}
%Let $R,Q,D>0$ and $ \mu >1 $ be fixed parameters. Let $ \psi $ be the solution to the same ODE as in \eqref{ode-psi-upper-lemma}, where in the case $ \mu \in (-1,0) $ we set
%\begin{equation}\label{ode-psi-lower-f-lemma}
%w(r):=
%\begin{cases}
%D & \forall r \in [0,R] \, , \\
%\frac{Q \, (2R)^{2\mu}}{R} \, (r-R) + \frac{ D }{R} \, (2R-r) & \forall r \in (R,2R] \, , \\
%Q \, r^{2\mu} & \forall r > 2R \, ,
%\end{cases}
%\end{equation}
%whereas in the case $ \mu \in [0,1] $ we set
%\begin{equation}\label{ode-psi-lower-f-lemma-bis}
%w(r):= \max \left\{ D \, , Q \, r^{2\mu} \right\} .
%\end{equation}
%Then $ \psi \in C^2([0,+\infty)) $ is positive on $ (0,+\infty) $ with $ \lim_{r \to +\infty} \psi(r)=+\infty $, $ \psi^\prime $ is positive on $ [0,+\infty) $, and \eqref{ode-psi-estimate-fond-lemma} holds.
%%\begin{equation}\label{ode-psi-estimate-fond-lemma-lower}
%%\frac{\psi^\prime(r)}{\psi(r)} \sim \sqrt{Q_2} \, r^\mu \quad \textrm{as } r \to +\infty \, .
%%\end{equation}
%\end{lem}

\subsection{The nonlinear elliptic equation: existence and asymptotic properties}

In order to prove existence and asymptotic properties of the minimal positive solution to equation \eqref{eq:elliptic}, we shall proceed by means of careful barrier arguments. Throughout, we shall tacitly adopt the same notations as in Section \ref{notation}. On the other hand, the uniqueness statements will be proved in Section \ref{uniq-p}, by exploiting the parabolic results.

\begin{lem}\label{ell-ssol}
Let $ M $ be a Cartan-Hadamard manifold of dimension $n\ge2$, satisfying the upper curvature bound \eqref{hp-sect} for some $\mu_1>1$ and $Q_1, R>0$. Then there exists $ r_0=r_0(n,m,\mu_1,Q_1,R)>0$ such that the function
\begin{equation*}\label{ell-ssol-1}
\overline{v}(x) \equiv \overline{v}(r):=\frac{C}{\left( r^2 + r_0^2 \right)^{\frac{\mu_1-1}{2(m-1)}}}
\end{equation*}
is a supersolution of \eqref{eq:elliptic} for all $C=C(n,m,\mu_1,Q_1,R) $ sufficiently large.
%namely it satisfies $ -\Delta \overline{v}^m \ge \overline{v}/(m-1) $ in $ M $.
\end{lem}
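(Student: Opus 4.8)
The plan is to verify directly that the radial function $\overline{v}(r) = C (r^2+r_0^2)^{-\frac{\mu_1-1}{2(m-1)}}$ satisfies the distributional supersolution inequality in Definition~\ref{defsolellipt}, by showing that it satisfies the pointwise differential inequality $-\Delta \overline{v}^m \ge \frac{1}{m-1}\overline{v}$ classically on $M \setminus \{o\}$ (it is smooth there), and then arguing that the singularity at the pole $o$ causes no problem because $\overline{v}$ is bounded and $\overline{v}^m$ is at worst $C^1$-like across $o$ (in fact, since $\mu_1>1$ and $m>1$, the exponent $\frac{\mu_1-1}{2(m-1)}$ can be arbitrarily large, but the function $r\mapsto(r^2+r_0^2)^{-s}$ is smooth at $r=0$, so $\overline{v}^m \in C^\infty$ near $o$ and no distributional boundary term arises). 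So the whole issue reduces to a pointwise ODE inequality away from the pole.

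The key computation is the following. Write $w := \overline{v}^m = C^m (r^2+r_0^2)^{-\beta}$ with $\beta := \frac{m(\mu_1-1)}{2(m-1)}$. Using the polar form \eqref{radial} of $\Delta$ applied to a radial function, $\Delta w = w'' + \mathsf{m}(r,\theta)\, w'$. Since $w' < 0$ and, by the curvature hypothesis \eqref{hp-sect} together with the Laplacian comparison \eqref{comp1} applied with the model function $\psi$ from Lemma~\ref{lem: ode-comparison} (with $Q=Q_1$), one has $\mathsf{m}(r,\theta) \ge (n-1)\psi'(r)/\psi(r)$ for all $r>0$, it follows that
\begin{equation*}
-\Delta w = -w'' - \mathsf{m}(r,\theta)\, w' \ge -w'' - (n-1)\,\frac{\psi'(r)}{\psi(r)}\, w' \qquad (r>0),
\end{equation*}
because $-w'>0$. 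Now I estimate each term. A direct differentiation gives $-w'' \sim -2\beta(2\beta+1) C^m r^2 (r^2+r_0^2)^{-\beta-1} + \cdots$, which is negative to leading order but of lower order; the dominant good term is $-(n-1)\frac{\psi'}{\psi} w'$. Using \eqref{ode-psi-estimate-fond-lemma}, $\psi'(r)/\psi(r) \sim \sqrt{Q_1}\, r^{\mu_1}$ as $r\to\infty$, and $-w' = 2\beta C^m r (r^2+r_0^2)^{-\beta-1} \sim 2\beta C^m r^{1-2\beta - \cdots}$, so $-(n-1)\frac{\psi'}{\psi} w' \sim (n-1)\sqrt{Q_1}\, 2\beta\, C^m\, r^{\mu_1+1-2(\beta+1)}(r^2+r_0^2)^{?}$. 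The point is that the exponent is tuned so that this behaves like a constant times $r^{\mu_1 - 1 - 2\beta + \cdots}$, and one checks $\mu_1 - 1 - 2\beta \cdot\frac{m-1}{m} = \mu_1-1-(\mu_1-1) = 0$ — i.e.\ the power exactly matches that of $\overline{v}(r) = C(r^2+r_0^2)^{-\frac{\mu_1-1}{2(m-1)}}$. Thus for $r$ large the inequality $-\Delta w \ge \frac{1}{m-1}\overline{v}$ holds as soon as $C$ is large enough (the constant from the curvature term beats $\frac{1}{m-1}$), with a gain factor one can quantify; the asymptotic constant needed is exactly $[m(\mu_1-1)(n-1)\sqrt{Q_1}]^{-1/(m-1)}$, consistent with \eqref{eq:elliptic:above}.

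For the remaining (bounded) range $r \in (0, R_*]$ for some large $R_*$, the function $\mathsf{m}$ and $\psi'/\psi$ are bounded below by a positive constant times $r$ near $0$ and by continuity are controlled on compact sets; here $-w''$ may be of the same order as the other terms, but since $w'$ and $w''$ both carry a factor $C^m$ while $\overline{v}$ carries only $C$, and $m>1$, choosing $C$ large makes $-\Delta w = C^m(\cdots) \ge \frac{1}{m-1} C\,(r^2+r_0^2)^{-\frac{\mu_1-1}{2(m-1)}}$ automatic on the compact set, provided the bracket $(\cdots)$ is bounded below by a positive constant there — which is where the freedom in $r_0$ enters: choosing $r_0$ large enough (depending on $n,m,\mu_1,Q_1,R$) ensures the radial profile is ``flat enough'' near $o$ that the negative contribution of $-w''$ is dominated. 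Finally one assembles the two regimes and invokes the density of $C_c^\infty$ together with the smoothness of $\overline{v}^m$ at $o$ to pass from the pointwise inequality to \eqref{veryweak} with ``$\ge$''.

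\textbf{Main obstacle.} The delicate point is bookkeeping the competition between the ``bad'' term $-w''$ (which is negative) and the ``good'' drift term $-\mathsf{m}\,w'$ uniformly in $r\in(0,\infty)$ and in $\theta$: one must check that the exponents in $r$ genuinely match (so that no secretly-dominant bad term survives at infinity), and simultaneously choose the two free parameters $r_0$ (to kill the bad term near the pole and in the transition zone $r\in[R,2R]$ where $\psi$ is only defined through the regularized ODE) and $C$ (to beat the right-hand side everywhere) in a compatible way. The asymptotic matching of constants — getting the sharp constant $[m(\mu_1-1)(n-1)\sqrt{Q_1}]^{-1/(m-1)}$ rather than a crude bound — is the part requiring the most care, and it is exactly the computation that must be done honestly rather than sketched.
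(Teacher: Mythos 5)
Your plan is correct and follows essentially the same route as the paper's proof: verify the pointwise supersolution inequality for the radial barrier using the Laplacian lower bounds furnished by Lemma \ref{lem: ode-comparison} and the comparison \eqref{comp1}, split into an inner region (where taking $r_0$ large keeps the bracket produced by $-(\overline v^m)_{rr}$ positive) and an outer region (where the drift term, whose power of $r$ exactly matches that of $\overline v$, dominates once $C$ is chosen large after $r_0$). The only differences are cosmetic: the paper splits at $r=1$ and reduces each regional inequality to its value at $r=1$ by monotonicity rather than arguing asymptotically beyond a large $R_*$, and your displayed asymptotics contain harmless exponent slips that do not affect the structure of the argument.
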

\begin{proof}
In view of the curvature bound, Lemma \ref{lem: ode-comparison} (with $ \mu=\mu_1 $ and $ Q=Q_1 $) and the Laplacian-comparison results recalled in Section \ref{notation} (concerning $ \mathrm{K}_\omega$), we can assert that there exists a constant $ \beta>0 $ (depending on $ n, \mu_1, Q_1, R $) such that
\begin{equation}\label{eq:lap-comp}
\mathsf{m}(r,\theta) \ge \frac{n-1}{r} \quad \text{in } M \, , \quad \mathsf{m}(r,\theta) \ge \beta \, r^{\mu_1}  \quad \text{in } M \setminus B_1(o) \, .
\end{equation}
For the sake of better readability, from now on we set $ Q_1 \equiv Q $ and $ \mu_1 \equiv \mu $. Because $ \overline{v} $ is radially decreasing, we can replace $ \mathsf{m}(r,\theta) $ with the lower bounds \eqref{eq:lap-comp} in the supersolution inequality. The radial derivatives of $ \overline{v}^m $ read
\begin{equation*}\label{eq:rad-der}
\begin{aligned}
\left( \overline{v}^m \right)_r = & - C^m \, \frac{m(\mu-1)}{m-1} \, \frac{r}{\left( r^2 + r_0^2 \right)^{\frac{m(\mu-1)}{2(m-1)}+1}} \, , \\
\left( \overline{v}^m \right)_{rr} = & -C^m \, \frac{m(\mu-1)}{m-1} \, \frac{1}{\left( r^2 + r_0^2 \right)^{\frac{m(\mu-1)}{2(m-1)}+1}} \left[ 1 - \left( \frac{m(\mu+1)-2}{m-1} \right) \frac{r^2}{r^2 + r_0^2} \right] .
\end{aligned}
\end{equation*}
Therefore, in order to make $ \overline{v} $ a supersolution in $ B_1(o) $, it is enough to ask
\begin{equation}\label{eq:rad-der-1}
%\begin{aligned}
C^{m-1} \left[ n - \frac{[m(\mu+1)-2] \, r^2}{(m-1)(r^2+r_0^2)} \right] \ge \frac{\left( r^2 + r_0^2 \right)^{\frac{\mu+1}{2}}}{m(\mu-1)} \quad \forall r \in (0,1) \, .
%\end{aligned}
\end{equation}
It is immediate to check that inequality \eqref{eq:rad-der-1} is equivalent to
\begin{equation}\label{eq:rad-der-2}
%\begin{aligned}
C^{m-1} \left[ n - \frac{m(\mu+1)-2}{(m-1)(1+r_0^2)} \right] \ge \frac{\left( 1 + r_0^2 \right)^{\frac{\mu+1}{2}}}{m(\mu-1)} \, .
%\end{aligned}
\end{equation}
Similarly, in order to make $ \overline{v} $ a supersolution in $ M \setminus B_1(o) $, it suffices to require
\begin{equation*}\label{eq:rad-der-3}
%\begin{aligned}
C^{m-1} \left[ \beta - \frac{m(\mu+1)-2}{(m-1)\,r^{\mu-1}\,(r^2+r_0^2)} \right] \ge \frac{\left( r^2 + r_0^2 \right)^{\frac{\mu+1}{2}}}{m(\mu-1)\,r^{\mu+1}} \quad \forall r \ge 1 \, ,
%\end{aligned}
\end{equation*}
which is again equivalent to the validity of the inequality itself at $r=1$, that is
\begin{equation}\label{eq:rad-der-4}
%\begin{aligned}
C^{m-1} \left[ \beta - \frac{m(\mu+1)-2}{(m-1)(1+r_0^2)} \right] \ge \frac{\left( 1 + r_0^2 \right)^{\frac{\mu+1}{2}}}{m(\mu-1)} \, . %\end{aligned}
\end{equation}
It is then apparent that one can choose $ r_0=r_0(n,m,\mu,\beta) $ so large that for all $ C=C(n,m,\mu,\beta,r_0) $ large enough both conditions \eqref{eq:rad-der-2} and \eqref{eq:rad-der-4} are fulfilled.
\end{proof}

\begin{lem}\label{lemma-lower}
Let $ M $ be a Cartan-Hadamard manifold of dimension $n\ge2$, satisfying the lower curvature bound \eqref{hp-ricc} for some $\mu_2>1$ and $Q_2, R>0$. Then the function
\begin{equation*}\label{sotto-sol}
\underline{v}(x) \equiv \underline{v}(r) := C_\varepsilon \left[ \frac{1}{\left( r^2 + r_0^2 \right)^{\frac{\mu_2-1}{2}}} - \delta \right]_+^{\frac{1}{m-1}} , \quad C_\varepsilon^{m-1} := \frac{1}{m (\mu_2-1) (n-1)\sqrt{Q_2+\varepsilon}} \, ,
\end{equation*}
is a subsolution of \eqref{eq:elliptic} for all $ \varepsilon,\delta>0$ and all $ r_0 $ large enough (depending on $ \varepsilon $ but independent of $ \delta $).
\end{lem}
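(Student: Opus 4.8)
The plan is to verify the pointwise differential inequality $-\Delta\underline{v}^m\le\frac1{m-1}\underline{v}$ in the open region $\{\underline{v}>0\}$ and then pass to the distributional formulation of Definition \ref{defsolellipt}. Since the exponent $m/(m-1)$ exceeds $1$, the function $\underline{v}^m=C_\varepsilon^m\,[(r^2+r_0^2)^{-(\mu_2-1)/2}-\delta]_+^{m/(m-1)}$ belongs to $C^1(M)\cap W^{2,1}_{\mathrm{loc}}(M)$: its gradient is continuous across the free boundary $\{(r^2+r_0^2)^{-(\mu_2-1)/2}=\delta\}$ (so no singular part is produced there), and its second derivatives blow up at an integrable rate on the inner side. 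Hence $-\int_M\underline{v}^m\,\Delta\phi\,d\mu=-\int_M(\Delta\underline{v}^m)\,\phi\,d\mu$ for every $\phi\in C_c^\infty(M)$, and it is enough to check $-\Delta\underline{v}^m\le\frac1{m-1}\underline{v}$ a.e.; this is trivial where $\underline{v}=0$, while where $\underline{v}>0$ the function $\underline{v}^m$ is smooth.

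Writing the Laplacian in polar coordinates as in \eqref{radial}, I would combine the Ricci lower bound \eqref{hp-ricc} with a lower bound for $\mathrm{Ric}_o$ on the compact ball $\overline{B_R(o)}$ and apply Lemma \ref{lem: ode-comparison-2} together with the Ricci comparison recalled in Section \ref{notation}, obtaining a model function $\psi$ with
\[
\mathsf{m}(r,\theta)\le(n-1)\,\frac{\psi'(r)}{\psi(r)}\quad\forall r>0,\ \theta\in\mathbb{S}^{n-1}\,,\qquad \lim_{r\to+\infty}\frac{\psi'(r)}{r^{\mu_2}\,\psi(r)}=\sqrt{Q_2}\,.
\]
Setting $g(r):=(r^2+r_0^2)^{-(\mu_2-1)/2}-\delta$, a direct computation in $\{\underline{v}>0\}=\{g>0\}$ — where $\underline{v}^m=C_\varepsilon^m\,g^{m/(m-1)}$ is radially decreasing, so $\mathsf{m}(r,\theta)$ may be replaced by its upper bound — yields
\[
-\Delta\underline{v}^m=\frac{C_\varepsilon^m\,m}{m-1}\,g^{\frac1{m-1}}\left[-\frac{(g')^2}{(m-1)\,g}-g''+\mathsf{m}(r,\theta)\,|g'|\right].
\]
Since $\tfrac1{m-1}\underline{v}=\tfrac{C_\varepsilon}{m-1}\,g^{1/(m-1)}$ and $\tfrac1{m\,C_\varepsilon^{m-1}}=(\mu_2-1)(n-1)\sqrt{Q_2+\varepsilon}$, and since the first bracketed term is nonpositive and may be discarded, using the identities $|g'|=(\mu_2-1)\,r\,(r^2+r_0^2)^{-(\mu_2+1)/2}$ and $-g''\le(\mu_2-1)\,(r^2+r_0^2)^{-(\mu_2+1)/2}$ together with the bound on $\mathsf{m}$, the required inequality reduces to the $\delta$-independent estimate
\[
1+(n-1)\,\frac{r\,\psi'(r)}{\psi(r)}\le(n-1)\sqrt{Q_2+\varepsilon}\,\left(r^2+r_0^2\right)^{\frac{\mu_2+1}{2}}\qquad\text{for all }r\ge0\,.
\]

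To establish this last inequality I would split the range of $r$, with $\varepsilon>0$ fixed. For $r\ge A$ the asymptotics of $\psi$ give $\psi'(r)/\psi(r)\le\sqrt{Q_2+\varepsilon/2}\,r^{\mu_2}$, so the left-hand side is at most $1+(n-1)\sqrt{Q_2+\varepsilon/2}\,r^{\mu_2+1}$ while the right-hand side is at least $(n-1)\sqrt{Q_2+\varepsilon}\,r^{\mu_2+1}$; choosing $A=A(n,\mu_2,Q_2,\varepsilon)$ so large that $(n-1)\bigl(\sqrt{Q_2+\varepsilon}-\sqrt{Q_2+\varepsilon/2}\bigr)A^{\mu_2+1}\ge1$ settles this range, uniformly in $r_0$. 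For $0\le r\le A$ the continuous function $r\mapsto r\,\psi'(r)/\psi(r)$ (which tends to $1$ at the pole) is bounded by some $M_A$, so the left-hand side is $\le1+(n-1)M_A$ whereas the right-hand side is $\ge(n-1)\sqrt{Q_2+\varepsilon}\,r_0^{\mu_2+1}$; it then suffices to take $r_0$ so large that $(n-1)\sqrt{Q_2+\varepsilon}\,r_0^{\mu_2+1}\ge1+(n-1)M_A$. Since $A$, $M_A$ and the resulting threshold for $r_0$ depend only on $n,m,\mu_2,Q_2,R,\varepsilon$ and not on $\delta$, this proves the lemma. The delicate point is exactly this balance: the comparison bound only gives $\mathsf{m}\sim(n-1)\sqrt{Q_2}\,r^{\mu_2}$ asymptotically, with the \emph{same} $Q_2$ that enters $C_\varepsilon$, so the estimate fails at $\varepsilon=0$; it is the slack $\sqrt{Q_2+\varepsilon}>\sqrt{Q_2}$ that controls the leading order for large $r$, while the shift $r_0$ (large, depending on $\varepsilon$) absorbs the near-pole and lower-order contributions, leaving a condition on $r_0$ that is manifestly independent of $\delta$.
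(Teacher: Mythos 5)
Your proposal is correct and follows essentially the same route as the paper: the same radial barrier computation combined with the Ricci/Laplacian comparison through Lemma \ref{lem: ode-comparison-2}, the same use of the $\varepsilon$-slack $\sqrt{Q_2+\varepsilon}>\sqrt{Q_2}$ to handle large $r$, and a large choice of $r_0$ (independent of $\delta$) to absorb the region near the pole, merely reorganized into a single scalar inequality split at a radius $A$ rather than the paper's two region-wise conditions \eqref{eq:ssol-2} and \eqref{eq:ssol-4}. The only genuine addition is your explicit justification that the $C^1\cap W^{2,1}_{\mathrm{loc}}$ regularity of $\underline{v}^m$ across the free boundary makes the a.e.\ pointwise inequality sufficient for the distributional formulation, a point the paper leaves implicit.
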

\begin{proof}
As above, we set $ Q_2 \equiv Q $ and $ \mu_2 \equiv \mu $. The radial derivatives of $ \underline{v}^m $ then read (we let $ C_\varepsilon \equiv C$ free for the moment)
\begin{equation*}\label{eq:rad-der-ssol}
\begin{aligned}
\left( \underline{v}^m \right)_r = & - C^m \, \frac{m(\mu-1)}{m-1} \, \frac{r}{\left( r^2 + r_0^2 \right)^{\frac{\mu+1}{2}}}  \left[ \frac{1}{\left( r^2 + r_0^2 \right)^{\frac{\mu-1}{2}}} - \delta \right]_+^{\frac{1}{m-1}} , \\
\left( \underline{v}^m \right)_{rr} = & -C^m \, \frac{m(\mu-1)}{m-1} \left[ \frac{1}{\left( r^2 + r_0^2 \right)^{\frac{\mu+1}{2}}} -\frac{(\mu+1) \, r^2}{\left( r^2 + r_0^2 \right)^{\frac{\mu+3}{2}}} \right] \left[ \frac{1}{\left( r^2 + r_0^2 \right)^{\frac{\mu-1}{2}}} - \delta \right]_+^{\frac{1}{m-1}} \\
 & + C^m \, \frac{m(\mu-1)^2}{(m-1)^2} \, \frac{r^2}{\left( r^2 + r_0^2 \right)^{\mu+1}}  \left[ \frac{1}{\left( r^2 + r_0^2 \right)^{\frac{\mu-1}{2}}} - \delta \right]_+^{\frac{2-m}{m-1}} .
\end{aligned}
\end{equation*}
By virtue of the curvature bound \eqref{hp-ricc}, Lemma \ref{lem: ode-comparison-2} and the Laplacian-comparison results recalled in Section  \ref{notation} (concerning $ \mathrm{Ric}_o $), we can infer that for all $ \varepsilon>0 $ there exist two constants $ \alpha, \underline{R} >0 $, depending on $ n,\mu,Q,R,\varepsilon$ and on
\[D:=-\inf_{x \in B_R(o)}  \mathrm{Ric}_o(x)/(n-1) \, , \]
such that
\begin{equation*}\label{eq:lap-comp-below}
\mathsf{m}(r,\theta) \le \frac{\alpha}{r} \quad \text{in } B_{\underline{R}}(o) \, , \quad \mathsf{m}(r,\theta) \le (n-1)\sqrt{Q +\varepsilon} \, r^\mu  \quad \text{in } M \setminus B_{\underline{R}}(o) \, .
\end{equation*}
Hence, in order to make $ \underline{v} $ a subsolution in $ B_{\underline{R}}(o) $, it is enough to ask (note that $ \underline{v} $ is still radially decreasing)
\begin{equation*}\label{eq:ssol-1}
\begin{aligned}
& C^{m-1} \left[ \alpha+1 - \frac{(\mu+1)r^2}{r^2+r_0^2} \right] \left[ \frac{1}{\left( r^2 + r_0^2 \right)^{\frac{\mu-1}{2}}} - \delta \right]_+ - \frac{\mu-1}{m-1} \, \frac{r^2}{\left( r^2 + r_0^2 \right)^{\frac{\mu+1}{2}}} \\
\le & \frac{\left( r^2 + r_0^2 \right)^{\frac{\mu+1}{2}}}{m(\mu-1)} \left[ \frac{1}{\left( r^2 + r_0^2 \right)^{\frac{\mu-1}{2}}} - \delta \right]_+ \quad \forall r \in (0,\underline{R}) \, ,
\end{aligned}
\end{equation*}
which is implied e.g.~by
\begin{equation}\label{eq:ssol-2}
C^{m-1} \, (\alpha+1) \le \frac{r_0^{\mu+1}}{m(\mu-1)} \, .
\end{equation}
On the other hand, $ \underline{v} $ is a subsolution in $ M \setminus B_{\underline{R}}(o) $ provided
\begin{equation*}\label{eq:ssol-3}
\begin{aligned}
& C^{m-1} \left[ 1 - \frac{(\mu+1) \, r^2}{(r^2+r_0^2)\left[(n-1)\sqrt{Q+\varepsilon} \, r^{\mu+1}+1\right]} \right] \left[ \frac{1}{\left( r^2 + r_0^2 \right)^{\frac{\mu-1}{2}}} - \delta \right]_+ \\
& - \frac{(\mu-1) \, r^2}{(m-1)\left[ (n-1)\sqrt{Q+\varepsilon} \, r^{\mu+1}+1 \right] \left( r^2 + r_0^2 \right)^{\frac{\mu+1}{2}}} \\
\le & \frac{\left( r^2 + r_0^2 \right)^{\frac{\mu+1}{2}}}{m(\mu-1)\left[(n-1)\sqrt{Q+\varepsilon} \, r^{\mu+1}+1 \right]} \left[ \frac{1}{\left( r^2 + r_0^2 \right)^{\frac{\mu-1}{2}}} - \delta \right]_+ \quad \forall r \ge \underline{R} \, ,
\end{aligned}
\end{equation*}
which is straightforwardly satisfied e.g.~if
\begin{equation}\label{eq:ssol-4}
C^{m-1} \le \, \frac{1}{m (\mu-1)(n-1)\sqrt{Q+\varepsilon}} \, , \quad r_0^2 \ge \frac{1}{(n-1)\sqrt{Q+\varepsilon} \, \underline{R}^{\mu-1}} \, .
\end{equation}
% oss: qui r_0 non conta nulla, posso anche prenderlo r_0=1. Sarebbe bello poter arrivare alla costate ottimale che pare essere proprio 1/(\beta m (\mu-1) anche con la soprasoluzione!
Note that $ C^{m-1} $ can be picked exactly equal to the right-hand side~of the first inequality in \eqref{eq:ssol-4}, provided $ r_0 $ is so large that also \eqref{eq:ssol-2} holds with such choice.
\end{proof}
Having at our disposal the barriers constructed in Lemmas \ref{ell-ssol}--\ref{lemma-lower}, we are now in position to prove our main result as regards existence and asymptotic properties of solutions to equation \eqref{eq:elliptic}.
\begin{proof}[Proof of Theorem \ref{thm-ell-ex} (i), (iii)]
To begin with, for all $ k \in \mathbb{N} $ one starts by considering the positive solutions to the Dirichlet problems
\begin{equation}\label{elliptic-k}
\begin{cases}
-\Delta v_k^m = \frac{1}{m-1} \, v_k & \text{in } B_k(o) \, , \\
v_k=0  & \text{on } \partial B_k(o) \, .
\end{cases}
\end{equation}
Existence and inner regularity can be shown by standard techniques e.g.~as in \cite{BK}, whose methods are rather general and apply to the present situation as well. If the curvature bound \eqref{hp-sect} holds, thanks to the comparison principle for subsolutions and positive supersolutions of \eqref{elliptic-k}, for which we refer again to \cite{BK}, we can assert that $ v_k \le v_{k+1} $ in $ B_k $ and, by virtue of Lemma \ref{ell-ssol}, the validity of the estimate
\begin{equation*}\label{est:above}
v_k(x) \le \frac{C}{\left( r^2 + r_0^2 \right)^{\frac{\mu_1-1}{2(m-1)}}} \quad \forall x \in B_k(o) \, .
\end{equation*}
Therefore, we can pass to the limit monotonically as $ k \to \infty $ to obtain the  existence of a positive solution $v$ to \eqref{eq:elliptic}, which satisfies
\begin{equation*}\label{est:above-limit}
v(x) \le \frac{C}{\left( r^2 + r_0^2 \right)^{\frac{\mu_1-1}{2(m-1)}}} \quad \forall x \in M
\end{equation*}
with the same constants $ r_0,C >0 $ as in Lemma \ref{ell-ssol}. The fact that such solution is minimal in the class of positive (and locally bounded) solutions is a direct consequence of the construction procedure. Regularity then follows again by positivity and standard elliptic estimates.

We are left with proving the upper estimate \eqref{eq:elliptic:above}. To this end, consider the function $\psi$ given in Lemma \ref{lem: ode-comparison} with the choices $\mu=\mu_1$, $Q=Q_1$, and denote by $ M_\psi $ the Riemannian model associated with it. By construction, as in the proof of Lemma \ref{ell-ssol}, the upper curvature bound \eqref{hp-sect} ensures that $M_\psi$ can be used for Laplacian comparison, namely \eqref{comp1} holds on $ M $. It is apparent that on $M_\psi$ the solution $v$ of \eqref{eq:elliptic}, constructed exactly as above, can be assumed to be radial. Let us write down explicitly the ordinary differential equation solved by $v(r)\equiv v(x) $ (by using \eqref{radial-model}), which reads
\begin{equation}\label{model-elliptic}
\left(\psi^{n-1}\left(v^m\right)'\right)' = -\frac{\psi^{n-1}}{m-1} \, v \quad \text{in } (0,+\infty) \, ,
\end{equation}
where $ ^\prime $ denotes derivation with respect to $ r $. Because $v$ is positive, hence regular, we can integrate \eqref{model-elliptic} to get
\begin{equation}\label{integral--1}
\left(v^m\right)'\!(r)=-\frac1{(m-1)\,\psi^{n-1}(r)}\int_0^r\psi^{n-1}(s) \, v(s)\,{ d}s \quad \forall r>0 \, .
\end{equation}
Upon integrating further \eqref{integral--1} from $r$ to infinity (thanks to Lemma \ref{ell-ssol} we already know that $v(r)$ vanishes $r\to\infty$), we end up with
\begin{equation}\label{integral}
v^m(r)=\frac1{m-1}\int_r^{+\infty} \left(\frac1{\psi^{n-1}(t)}\int_0^t\psi^{n-1}(s) \, v(s)\,{ d}s\right) {d}t \quad \forall r>0 \, .
\end{equation}
Suppose now that the a priori estimate
\begin{equation}\label{limsup}
\limsup_{r \to +\infty} \, r^{\frac{\mu_1-1}{m-1}} \, v(r) \le C
\end{equation}
holds for some $C>0$. We know, still from Lemma \ref{ell-ssol}, that \eqref{limsup} does hold for a suitable $C>0$; moreover, Lemma \ref{lemma-lower} entails that
\begin{equation}\label{low}
C^{m-1}\ge\frac{1}{m (\mu_1-1) (n-1)\sqrt{Q_1}} \, ,
\end{equation}
since $M_\psi$ also satisfies the curvature bound from below \eqref{hp-ricc} with $ \mu_2=\mu_1 $ and $ Q_2=Q_1 $. Let us insert the asymptotic information given by \eqref{limsup} into identity \eqref{integral}. So, by virtue of \eqref{limsup}, l'H\^{o}pital's rule and \eqref{ode-psi-estimate-fond-lemma}, it is not difficult to infer that for every $ \varepsilon > 0 $ there exists $ R_\varepsilon> 0 $ such that
\begin{equation}\label{integral1}
 \int_0^t\psi^{n-1}(s) \, v(s)\,{ d}s \le \frac{C+\varepsilon}{(n-1)\sqrt{Q_1}} \, \psi^{n-1}(t) \, t^{-\frac{m\mu_1-1}{m-1}} \quad \forall t\ge R_\varepsilon \, .
\end{equation}
Thus, by plugging the bound \eqref{integral1} in \eqref{integral} we obtain the estimate
\[
v^m(r)\le \frac{C+\varepsilon}{m(\mu_1-1)(n-1)\sqrt{Q_1}} \, r^{-m \, \frac{\mu_1-1}{m-1}} \quad \forall r\ge R_\varepsilon \, .
\]
It follows that the constant $C$ in \eqref{limsup} is improved by
\[
\left[ \frac{C}{m(\mu_1-1)(n-1)\sqrt{Q_1}} \right]^{\frac1m} .
\]
Notice that the validity of the bound from below \eqref{low} ensures that this latter constant is actually not larger than $C$. By iterating such procedure, it is straightforward to show that \eqref{limsup} eventually holds on $ M_\psi $ with
$$
C = \frac{1}{\left[ m (\mu_1-1)(n-1) \sqrt{Q_1} \right]^{\frac{1}{m-1}}} \, .
$$
In fact we have proved more, namely the last part of the statement: estimate \eqref{limsup} is satisfied with  equality as a limit  on $ M_\psi $. The validity of \eqref{eq:elliptic:above} on the original manifold $ M $ is then a simple consequence of Laplacian comparison, since the just-constructed solution $v$ becomes a \emph{supersolution} on $M$ (here it is key that $ v $ is radially decreasing).

Suppose now that also the lower bound on the Ricci curvature \eqref{hp-ricc} holds. By arguing as above, we can deduce that the positivity of $ v_k $ in $ B_k $, comparison and Lemma \ref{lemma-lower} yield
\begin{equation}\label{est:below}
v_k(x) \ge \left[ \left( \frac{1}{m (\mu_2-1) \sqrt{Q_2+\varepsilon}} \right) \left[ \frac{1}{\left( r^2 + r_0^2 \right)^{\frac{\mu_2-1}{2}}} - \delta \right] \right]_+^{\frac{1}{m-1}} \quad \forall x \in B_k \, ,
\end{equation}
provided $ k $ is so large  that the support of the r.h.s.~is contained in $ B_k $. Hence, estimate \eqref{eq:elliptic:below} just follows by letting first $ k \to \infty $, then $ \delta \to 0 $, then $ r \to +\infty $ and finally $ \varepsilon \to 0 $ in \eqref{est:below}.
\end{proof}

\subsection{Comparison principles, uniqueness and non-uniqueness results}\label{uniq-p}
Our strategy of proof as concerns the comparison principle ``at infinity'' for \eqref{pme} (and as a consequence for \eqref{elliptic}) borrows some ideas and techniques both from \cite{BK} and \cite{GMP}. To some extent, we combine a celebrated ``duality method'', which goes back to \cite{BCP,Pierre}, and suitable estimates that take advantage of the Green function $G_R$ on the Riemannian balls $B_R(o)$ associated with the Dirichlet Laplace-Beltrami operator. In a few words, such a method consists in choosing a smart enough test function, which is in fact the solution of a proper dual problem, to be plugged in the (very) weak formulation satisfied by $ \overline{u}-\underline{u} $, $ \overline{u}$ and $\underline{u}$ being super- and subsolutions respectively. Then one exploits $G_R$ in order to bound the normal derivative of the dual solution (in particular its integral over $\partial B_R(o)$): this allows us to prove that remainder terms arising from the integrations by parts carried out vanish as $ R \to \infty $. The integral inequality we obtain in the end has, as a free parameter, the final datum of the dual problem; since the latter is arbitrary, the claimed inequality follows. We stress that we use only Green functions on balls, hence nonparabolicity of the manifold need not be assumed.

\begin{proof}[Proof of the parabolic uniqueness and comparison: Theorem \ref{uniqueness-parabolic}]
Since $ \overline u $ and $ \underline u$ are a distributional supersolution and subsolution, respectively, of \eqref{pme}, it is straightforward to check that they satisfy the following inequality:
\begin{equation}\label{n3}
\int_0^T \int_M \left[ (\overline{u} - \underline{u}) \, \xi_t + \left(\overline{u}^m - \underline{u}^m\right) \Delta \xi \right] d\mu dt \le \int_M \left[ \overline{u}(x,T) - \underline{u}(x,T) \right] \xi(x,T) \, d\mu \, ,
\end{equation}
for a.e.~$ T>0 $ and for any nonnegative $\xi\in C^\infty_c(M\times [0, T])$. Let
\begin{equation}\label{n4}
a(x,t):=
\begin{cases}
\frac{\overline{u}(x,t)^m - \underline{u}(x,t)^m}{\overline{u}(x,t) - \underline{u}(x,t)} & \text{if } \overline u (x,t) \neq \underline u(x,t) \, , \\
0 & \text{if } \overline u(x,t) = \underline u(x,t) \, .
\end{cases}
\end{equation}
Clearly $a$ is everywhere nonnegative and bounded. So, due to \eqref{n4}, inequality \eqref{n3} can be rewritten as
\begin{equation*}%\label{n6}
\int_0^T \int_M (\overline u - \underline u) \left(\xi_t + a \, \Delta \xi \right) d\mu dt \le \int_M \left[ \overline u(x,T) - \underline u(x,T) \right] \xi(x,T) \, d\mu \, .
\end{equation*}
Now let us take $R_0>0$ and $R>R_0+1$: in the sequel, $R_0$ is meant to be fixed once for all, while $R$ will eventually go to infinity. In order to simplify notations, we set $ B_r(o) \equiv B_r $ for all $ r>0 $. Let
\begin{equation*}%\label{n7}
\begin{gathered}
\{a_{n, R}\}_{n\in \mathbb N} \equiv  \{a_n\}_{n\in \mathbb N}\subset C^\infty(M\times [0, T])\,,\quad a_n>0 \quad \text{in}\ B_R \ \textrm{for every}\ n\in \mathbb N \, , \\
\omega\in C^\infty_c(M) \, , \quad \omega \geq  0 \, , \quad \omega \equiv 0 \quad  \text{in } M \setminus B_{R_0} \, .
\end{gathered}
\end{equation*}
For every $n\in \mathbb N$, we call $\xi_n$ the solution of the backward parabolic (dual) problem
\begin{equation}\label{n9}
\begin{cases}
\partial_t \xi_n  + a_n \, \Delta \xi_n = 0 & \text{in } B_R \times (0, T) \, , \\
\xi_n = 0 & \text{on } \partial B_R\times (0, T) \, , \\
\xi_n = \omega & \text{in } B_R \times \{T\} \, .
\end{cases}
\end{equation}
It is plain that $ \xi_n $ is positive in $ B_R \times (0,T) $, regular and satisfies
\begin{equation}\label{n43}
\frac{\partial \xi_n}{\partial \nu}  \leq 0 \quad \text{on } \partial B_R\times (0, T)\,.
\end{equation}
For all $0<\varepsilon< 1 / 2$ we can pick a family of nonincreasing cut-off functions $ \tilde \phi_\varepsilon $ such that
\begin{equation}\label{n10}
\begin{gathered}
\tilde \phi_\varepsilon\in C^\infty_c([0,\infty))\,,\quad 0\leq \tilde\phi_\varepsilon\leq 1 \, , \quad
\tilde \phi_\varepsilon \equiv 1 \quad \text{in}\ [0, R-2\varepsilon) \, , \quad \tilde \phi_\varepsilon \equiv 0 \quad \text{in}\ (R-\varepsilon, \infty) \, , \\
%\big| \tilde \phi'_\varepsilon \big| \leq \frac C{\varepsilon} \, \chi_{[R-2\varepsilon, R-\varepsilon]}\,,\quad |\tilde \phi'' |\,\leq\, \frac{C}{\varepsilon^2}\,\chi_{[R-2\varepsilon, R-\varepsilon]} \, .
\end{gathered}
\end{equation}
and then set
\begin{equation}\label{n11}
\phi_\varepsilon(x):=\tilde \phi_\varepsilon(r(x))\quad \forall x\in M \, .
\end{equation}
%In particular, there holds
%\begin{equation}\label{n12}
%|\nabla \phi_\varepsilon| \leq \frac{C}{\varepsilon} \, \chi_{B_{R-\varepsilon}\setminus B_{R-2\varepsilon}}   \quad \text{in } M ;
%\end{equation}
%%furthermore, since $0\leq \sigma \leq 2$, from \eqref{e1}, \eqref{e5} and \eqref{eq: m-est-above} we deduce that
%Besides, it holds:
%\begin{equation}\label{n13}
%|\Delta \phi_\varepsilon| \,  \leq \frac{C}{\varepsilon^2} \, \chi_{B_{R-\varepsilon}\setminus B_{R-2\varepsilon}}\left(1+ \varepsilon \sup_{\stackrel{r\in[R-2\varepsilon, R-\varepsilon]}{\theta\in{\mathbb S}^{n-1}}}m(r,\theta)\right) =: \frac{C}{\varepsilon^2} \, \chi_{B_{R-\varepsilon}\setminus B_{R-2\varepsilon}} \, H(R,\varepsilon) \quad \text{in}\ M ,
%\end{equation}
%the latter equality being meant as the definition of $H(R,\varepsilon)$, for some other constant $C>0$ independent of $R$ and $\varepsilon$ (that we shall not relabel below).
We are therefore allowed to plug $ \xi \equiv \xi_n  \phi_\varepsilon $ in \eqref{n3}, which yields
\begin{equation}\label{n14}
\begin{aligned}
& \int_0^T \int_M (\overline{u} - \underline{u}) \, \phi_\varepsilon \, (a- a_n) \, \Delta \xi_n \, d\mu dt  \\
& + \int_0^T \int_M (\overline{u}^m - \underline{u}^m) \left(2\langle \nabla \phi_\varepsilon, \nabla \xi_n\rangle + \xi_n \, \Delta \phi_\varepsilon\right) d\mu dt \\
\le & \int_M [\overline{u}(x,T) - \underline{u}(x,T)] \, \omega(x) \, \phi_\varepsilon(x) \, d\mu \, .
\end{aligned}
\end{equation}
Let us set
\begin{equation}\label{n15}
I_{n , \varepsilon}(R):= \int_0^T \int_M (\overline{u}-\underline{u}) \, \phi_\varepsilon \, (a- a_n) \, \Delta\xi_n \, d\mu dt \, ,
\end{equation}
\begin{equation}\label{n16}
J_{n , \varepsilon}(R):= \int_0^T \int_M \left(\overline{u}^m - \underline{u}^m\right) \left(2\langle \nabla \phi_\varepsilon, \nabla \xi_n\rangle + \xi_n \, \Delta \phi_\varepsilon\right) d\mu dt \, .
\end{equation}
By standard computations, thanks to \eqref{n10}--\eqref{n11}, the positivity and regularity of $ \xi_n $ as well as the fact that the latter vanishes on $ \partial B_R $, it is not difficult to show that (recall also \eqref{n43})
\begin{equation}\label{lim-dn}
\begin{gathered}
\lim_{\varepsilon \downarrow 0} \int_{M} \max_{t \in [0,T]}  \left(2\langle \nabla \phi_\varepsilon, \nabla \xi_n\rangle + \xi_n \, \Delta \phi_\varepsilon\right)_+ d\mu =   \int_{S_R} \max_{t \in [0,T]} \left|\frac{\partial \xi_n}{\partial \nu}(x,t)\right| d \sigma \, ,\\
\lim_{\varepsilon \downarrow 0}  \int_{M} \max_{t \in [0,T]} \left(2\langle \nabla \phi_\varepsilon, \nabla \xi_n\rangle + \xi_n \, \Delta \phi_\varepsilon\right)_- d\mu = 0 \, ,
\end{gathered}
\end{equation}
where $ S_R := \partial B_R $ and $ d\sigma $ is the Hausdorff measure on $ \partial S_R $. By virtue of \eqref{cond-inf-bis}, for every $ \epsilon \in (0,T) $ we know that there exists some positive function $ \alpha_\epsilon(R) $ such that $ \lim_{R\to\infty} \alpha_\epsilon(R) = 0 $ and
\begin{equation}\label{eq:est-liminf}
\int_\epsilon^T \left[ \overline{u}(x,t)^m - \underline{u}(x,t)^m \right] dt \ge - \alpha_\epsilon(R) \quad \text{for a.e.} \ x \in M \setminus B_{R-1} \, ;
\end{equation}
on the other hand, $ \overline{u}$ and $\underline{u} $ being globally bounded,
\begin{equation}\label{eq:est-liminf-2}
\int_0^\epsilon \left(|\overline u(x,t)|^m+|\underline u(x,t)|^m\right) dt \le \left( \| \overline{u} \|_\infty + \| \underline{u} \|_\infty \right) \epsilon =: C \epsilon \quad \text{for a.e.} \ x \in M \setminus B_{R-1} \, .
\end{equation}
Thus, by combining \eqref{n16}, \eqref{lim-dn}, \eqref{eq:est-liminf} and \eqref{eq:est-liminf-2}, we end up with
\begin{equation}\label{eq:est-comp-1}
J_n(R) := \liminf_{\varepsilon \downarrow 0} J_{n,\varepsilon}(R) \ge - \left( \alpha_\epsilon(R) + C \epsilon \right) \int_{S_R} \max_{t \in [0,T]} \left|\frac{\partial \xi_n}{\partial \nu}(x,t)\right| d \sigma \, .
\end{equation}
We now need to provide a suitable bound on
$$
\left|\frac{\partial \xi_n}{\partial \nu}(x,t)\right| \quad \text{for every } x \in S_R \text{ and } t \in (0,T) \, .
$$
To this aim, let us denote by $ x \mapsto G_R(x,o) $ the Green function of the Dirichlet Laplace-Beltrami operator on $ B_R $, namely the unique positive solution to
\begin{equation}\label{eq:green}
\begin{cases}
-\Delta G_R(x,o) = \delta_{o} & \text{in } B_R \, , \\
G_R(x,o) = 0  & \text{on } S_R \, ,
\end{cases}
\end{equation}
$ \delta_o $ being the Dirac mass centered at $ o $. Note that, upon integrating the differential equation in \eqref{eq:green} and observing that $ \tfrac {\partial G_R} {\partial \nu} \le 0 $ on $ S_R $, we end up with
\begin{equation}\label{eq:green-1}
\int_{S_R} \left| \frac{\partial G_R }{\partial \nu}  \right| d\sigma = 1 \, ,
\end{equation}
a crucial identity we shall exploit below. It is well known that $ R \mapsto G_R(x,o) $ is nondecreasing and $ x \mapsto G_R(x,o) $ is strictly positive in $ B_R $: in particular there exists $ \lambda>0 $, independent of $ R $, such that
\begin{equation}\label{eq:green-2}
 \lambda \, G_R(x,o) \ge \| \omega \|_\infty \quad \forall x \in S_{R_0} \, , \quad \forall R > R_0 + 1 \, .
\end{equation}
On the other hand, the comparison principle for problem \eqref{n9} ensures that $ 0 \le \xi_n \le \| \omega \|_\infty $ in $ B_R \times (0,T) $. As a consequence, thanks to \eqref{eq:green-2} and \eqref{eq:green} we infer that $ x \mapsto \lambda \, G_R(x,o) $ is a supersolution of
\begin{equation*}\label{n9-bis}
\begin{cases}
\partial_t u  + a_n \, \Delta u = 0 & \text{in } ( B_R  \setminus \overline{B}_{R_0} ) \times (0, T) \, , \\
u = 0 & \text{on } S_R \times (0, T) \, , \\
u  = \xi_n & \text{on } S_{R_0} \times (0,T)  \, ,\\
u = 0 & \text{in } ( B_R  \setminus \overline{B}_{R_0} ) \times \{T\} \, ,
\end{cases}
\end{equation*}
whereas $ \xi_n $ is a solution to the same problem. Hence, still by comparison, there follows the validity of the inequality $ \xi_n(x,t) \le \lambda \, G_R(x,o) $ for all $ (x,t) \in ( {B}_R  \setminus \overline{B}_{R_0} ) \times (0,T) $. Since both $ \xi_n $ and $ G_R $ are nonnegative and vanish on $ S_R $, from such inequality we also deduce that
 \begin{equation}\label{der-norm-1}
\left| \frac{\partial \xi_n}{\partial \nu}(x,t) \right| \leq \lambda \left|\frac{\partial G_R}{\partial \nu} (x,o) \right| \quad \forall (x,t) \in S_R \times (0,T) \, .
\end{equation}
By plugging \eqref{der-norm-1} in \eqref{eq:est-comp-1} and exploiting \eqref{eq:green-1}, we therefore obtain
\begin{equation}\label{eq:est-4-bis}
J_n(R) \ge - \lambda \left( \alpha_\epsilon(R) + C \epsilon \right) ;
\end{equation}
if we let first $ n \to \infty $, then $ R \to+  \infty $ and finally $ \epsilon \to 0 $, estimate \eqref{eq:est-4-bis} yields
\begin{equation}\label{eq:est-4-ter}
\liminf_{R\to+\infty} \liminf_{n\to\infty} J_n(R) \ge 0 \, .
\end{equation}
As for the term $I_{n , \varepsilon}(R)$ defined in \eqref{n15}, the proof goes exactly as the one of Theorem 3.1 of \cite{GMP}, so we just sketch the main points. First of all, by multiplying the differential equation in \eqref{n9} by $\Delta\xi_n$ and integrating over $B_R\times (0, T)$ we get
\[
\frac 1 2\int_{B_R} \left| \nabla\xi_n(x,0) \right|^2 d\mu + \int_0^T \int_{B_R} a_n \left(\Delta \xi_n \right)^2 d\mu dt = \frac 1 2 \int_M \left| \nabla \omega \right|^2 d\mu \, ,
\]
so that
\begin{equation}\label{n48}
\left| I_{n , \varepsilon}(R) \right| \le \frac{\| u \|_\infty + \| v \|_\infty}{\sqrt{2}} \left( \int_M \left| \nabla \omega \right|^2 d\mu \right)^{\frac 12}  \left(\int_0^T \int_{B_R}\frac{(a-a_n)^2}{a_n} \, d\mu dt  \right)^{\frac 1 2} .
\end{equation}
It is easily shown that one can exhibit positive, regular approximations $ a_n $ of $ a $ in such a way that, at every fixed $ R>R_0+1 $,
$$
\lim_{n\to\infty} \int_0^T \int_{B_R}\frac{(a-a_n)^2}{a_n} \, d\mu dt = 0 \, ;
$$
the basic idea is to regularize $ a $ allowing for an $ L^2(B_R) $ error of order e.g.~$  1/n $ and then lift the corresponding regularizations by $ 1/n $. As a consequence, thanks to \eqref{eq:est-comp-1}, \eqref{eq:est-4-ter} and \eqref{n48}, we end up with
$$
\liminf_{R\to+\infty} \liminf_{n \to \infty} \liminf_{\varepsilon \downarrow 0} \left( I_{n , \varepsilon}(R) + J_{n , \varepsilon}(R) \right) \ge 0 \, ,
$$
whence, recalling \eqref{n14},
\begin{equation*}\label{eq:last-id}
\int_M [\overline{u}(x,T) - \underline{u}(x,T)] \, \omega(x) \, d\mu \ge 0 \, .
\end{equation*}
The conclusion follows given the arbitrariness of $ T $ and the test function $ \omega $.

Finally, the uniqueness statement is a trivial consequence of comparison.
\end{proof}

\begin{proof}[Proof of the elliptic (non-) uniqueness and comparison: Theorem \ref{thm-ell-ex} (ii)]
Our aim is to prove that $ \overline{v} \ge \underline{v} $: to this end, we shall exploit the validity of the comparison principle for the parabolic problem \eqref{pme} as stated in Theorem \ref{uniqueness-parabolic} and proved above. Indeed, the separable function
$$
\underline{u}(x,t):= \frac{\underline{v}(x)}{(t+1)^{\frac{1}{m-1}}}
$$
is by construction a subsolution of \eqref{pme} with initial datum $\underline{v}$. On the other hand, it is immediate to check that the function
$$
\overline{u}(x,t):= \frac{\overline{v}(x)}{t^{\frac{1}{m-1}}} \wedge \| \underline{v} \|_\infty
$$
is a bounded supersolution of \eqref{pme} with initial datum $ \| \underline{v} \|_\infty $ (recall that $ \overline{v} $ is positive by assumption). Thanks to \eqref{cond-elliptic-ss} it is plain that \eqref{cond-inf-bis} holds, and we are therefore in position to apply the above-mentioned parabolic comparison principle to deduce that
\begin{equation}\label{cmp-par}
\frac{\underline{v}(x)}{(t+1)^{\frac{1}{m-1}}} \le \frac{\overline{v}(x)}{t^{\frac{1}{m-1}}} \wedge \| \underline{v} \|_\infty \quad \text{for a.e.~} (x,t) \in M \times \mathbb{R}^+ \, .
\end{equation}
The thesis follows by letting $ t \to +\infty $ in \eqref{cmp-par}.

\noindent $\bullet$ Uniqueness of the minimal solution in the class of nonnegative, nontrivial, bounded solutions complying with \eqref{cond-elliptic} then follows by the just proved comparison result, upon noticing that as a consequence of standard strong maximum principles \emph{any} nonnegative, nontrivial, bounded solution to \eqref{eq:elliptic} is in fact positive.

\noindent $\bullet$  As for the construction of a positive, bounded solution satisfying \eqref{cond-elliptic-alpha}, let us start by showing that, thanks to \eqref{hp-sect}, equation
\begin{equation}\label{potential}
-\Delta w=1 \quad \text{in } M
\end{equation}
has a positive, bounded supersolution vanishing at infinity. First of all, we look for a supersolution of the form $ W(x) \equiv W(r):=K/r^{\mu_1-1}$ in $ M \setminus B_{r_0}$ (from here on we set $ \mu_1 \equiv \mu $ for notational simplicity), where $ K >0 $, $ r_0 \ge 1 $ are suitable constants to be chosen, and we set again $ B_r(o) \equiv B_r $ for all $ r>0 $. As in the beginning of the proof of Lemma \ref{ell-ssol}, from the curvature assumption we infer the validity of \eqref{eq:lap-comp}. By the monotonicity of $ r \mapsto W(r)$, it is therefore enough to require that
\[
- W''(r) - \beta r^{\mu} \, W'(r) \ge 1  \quad \forall r > r_0 \, ,
\]
and an explicit elementary computation shows that this can be done up to picking $K$ and $r_0$ large enough. In order to make it a barrier in the whole of $M$, we extend it in a $C^1$ fashion as follows (we keep denoting by $W$ such an extension):
\[
W(r):=
\begin{cases}
\frac{K}{r_0^{\mu-1}} \big[ \mu - (\mu-1) \frac{r}{r_0} \big] \ \ \ &\text{if}\ r \in [0, r_0) \, , \\
\frac{K}{r^{\mu-1}}\ \ \ &\text{if}\ r \ge r_0 \, .
\end{cases}
\]
Recalling the left-hand inequality of \eqref{eq:lap-comp}, it is apparent that $W$ becomes a \emph{weak} supersolution of \eqref{potential} provided $ K $ is sufficiently large. We can now construct, for a given $\alpha>0$, a supersolution $V$ of \eqref{eq:elliptic} which satisfies the additional condition $ V \ge \alpha$ in $M$. Indeed, by means of a direct calculation one checks that $ V $ can be chosen in the following way:
\begin{equation*}
V(x) \equiv V(r) := \left[ C \, W(r) + \alpha^m \right]^{\frac1m} ,
\end{equation*}
for any $C>0$ fulfilling
$$
C \ge \frac{1}{m-1} \left[ C \, W(0) + \alpha^m \right]^{\frac1m} .
$$
To provide the claimed solution $v_\alpha$ we use the same strategy as in \cite[Remark 4]{BK}, namely we introduce the (positive) solutions $ v_{\alpha,R} $ to the Dirichlet problems
\[
\begin{cases}
-\Delta v_{\alpha,R}^m = \frac1{m-1} \, v_{\alpha,R}  & \text{in}\ B_R \, , \\
v_{\alpha,R}=\alpha  & \text{on}\ \partial B_R \, .
\end{cases}
\]
The comparison principle in $ B_R $ ensures that $ R \mapsto v_{\alpha,R} $ is monotonically increasing and $ \alpha \le  v_{\alpha,R} \le V $ in $ B_R $. The conclusion follows by letting $ v_\alpha:=\lim_{R \to +\infty} v_{\alpha,R} $, noting that $ V $ tends to $ \alpha $ at infinity. Uniqueness is clear from \eqref{cond-elliptic-ss} and the comparison principle established in the beginning.
%Let $V$ be the minimal solution to \eqref{pme} with the same initial datum. We claim that
%\begin{equation}\label{friendly}
%V(x,t)\le \frac{\overline{v}(x)}{t^{\frac1{m-1}}} \quad \text{for a.e.~} (x,t) \in M \times \mathbb{R}^+ .
%\end{equation}
%To prove it, we introduce the solutions $V_R(x,t)$ to the homogeneous Dirichlet problems on $ B_R $ as in \eqref{pme-balls}, with $ u_0 \equiv \underline{v} $. Then by standard comparison on balls and by the positivity of $\overline{v} $, for all $R>0$ there exists $t_0(R)>0$ such that
%\[
%V_R(x,t) \le \frac{\overline{v}(x)}{\left(t+t_0(R)\right)^{\frac1{m-1}}} \quad \text{for a.e.~} (x,t) \in B_R \times \mathbb{R}^+ .
%\]
%Hence, the claimed estimate \eqref{friendly} just follows upon letting $R\to+\infty$. Thanks to \eqref{friendly}, the fact that $ \overline{v} $ vanishes at infinity and  By the parabolic comparison results of Theorem \ref{uniqueness-parabolic} applied to $V$ and $\underline{V}$ it follows that
%\[
%\frac{\underline{v}(x)}{(t+1)^{1/(m-1)}}\le \frac{\overline{v}(x)}{t^{\frac1{m-1}}}
%\]
%for a.e. $x,t$. The statement follows by letting $t\to+\infty$.
\end{proof}

\subsection{Asymptotics and pointwise estimates for solutions to the PME}\label{asympt}

In the first part of this section we prove Theorem \ref{thm:conv}, namely the result relating the asymptotics of  minimal  solutions to \eqref{pme} to the separable solution constructed in terms of the minimal solution to the nonlinear elliptic equation \eqref{eq:elliptic}.

The method of proof is by now classical, so we shall only stress the main points. As references, see \cite[Theorem 1.1]{V04} or \cite[Theorem 20.1]{V07} in the framework of Euclidean bounded domains, \cite[Theorem 5.1]{KRV10} in the case of a weighted Euclidean PME in the whole space and \cite[Theorem 3.1]{GMPfrac} for its fractional (nonlocal) counterpart.

\begin{proof}[Proof of Theorem \ref{thm:conv}]
First of all, the validity of \eqref{universal} is a straightforward consequence of the fact that $ v $ is strictly positive. Indeed, recalling the definition of minimal solution through the approximate solutions $ u_R $ of \eqref{pme-balls}, there exists $ t_0=t_0(R)>0 $ such that
\begin{equation*}\label{absb1}
u_0(x) \le \frac{v(x)}{ t_0^{\frac{1}{m-1}}} \quad \text{for a.e. } x \in B_R(o) \, ;
\end{equation*}
hence, as a consequence of the comparison principle on balls, there follows
\begin{equation*}\label{absb2}
u_R(x,t) \le \frac{v(x)}{\left( t + t_0 \right)^{\frac{1}{m-1}}} \quad \text{for a.e. } (x,t) \in B_R(o) \times \mathbb{R}^+
\end{equation*}
and so \eqref{universal} upon letting $ R \to \infty $. The absolute bound \eqref{universal1} is then guaranteed by the boundedness of $ v $ (Theorem \ref{thm-ell-ex} (i)).

In order to prove \eqref{convergence}, let us introduce the following \emph{rescaled} solution $U$:
\begin{equation}\label{eq: def-v}
u(x,t) =: e^{-\frac{\tau}{m-1}} \, U( x , \tau) \, , \quad t =: e^{\tau} \qquad \forall (x,\tau) \in M \times [0,+\infty) \, .
\end{equation}
It is immediate to check that $U$ is a (very weak) nonnegative solution of the equation
\[
U_\tau = \Delta U^m + \frac{1}{m-1} \, U \quad \text{in } M \times(0,+\infty) \, ,
\]
in the sense that
% (we can suppose with no loss of generality that $ t=1 $ is a Lebesgue point of $ t \mapsto u(\cdot,t) $)
\begin{equation}\label{e81}
\begin{aligned}
- \int_0^{+\infty} \int_M U \, \varphi_\tau \, d\mu d\tau = & \int_0^{+\infty} \int_M U^m \, \Delta \varphi \, d\mu d\tau + \frac{1}{m-1} \int_0^{+\infty} \int_{M}  U \, \varphi \, d\mu d\tau
% \\ & + \int_M u(x,1) \, \varphi(x,0)\, d\mu \\
\end{aligned}
\end{equation}
for all $\varphi \in C^\infty_c(\mathbb{R}^d \times (0,+\infty) )$. Moreover, thanks to \eqref{universal}--\eqref{universal1},
\begin{equation}\label{e78}
U(x,\tau) \leq v(x) \leq C \quad \text{for a.e.\ } (x,\tau) \in M \times (0,+\infty) \, .
\end{equation}
On the other hand, by the B\'enilan-Crandall inequality, we know that
\begin{equation}\label{ebc}
 u_t \ge - \frac{u}{(m-1) \, t} \quad \text{in } M \times (0,+\infty) \, ,
\end{equation}
in the sense of distributions. This is a remarkable inequality that was first proved in \cite{BC} in Euclidean frameworks, but in fact only depends on the time-scaling properties of the PME (see e.g.~\cite[p.~182]{V07}), so it also holds on manifolds. Note that \eqref{ebc} can be rewritten equivalently as
\begin{equation}\label{e79}
U_\tau \geq 0 \quad \text{in } M \times (-\infty,+\infty) \, .
\end{equation}
In particular, from \eqref{e79} we deduce that $ \tau \mapsto U(\cdot,\tau) $ is (essentially) monotone increasing: therefore, the fact that $ u_0 \not \equiv 0$ and \eqref{e78} yield the existence of the (essential) monotone limit $ \hat{v}(x):= \lim_{\tau \to +\infty} U(x,\tau) $, which satisfies
\begin{equation}\label{e79-bis}
0 \le \hat{v} \le v \, , \quad \hat{v} \not\equiv 0 \, .
\end{equation}
Let us show that $ \hat{v} $ is a solution of \eqref{eq:elliptic}. By means of a standard in-time truncation argument, from \eqref{e81} it is direct to deduce the validity of
\begin{equation}\label{e81-bis}
\begin{aligned}
\int_M  U(x,\tau_2) \, \phi(x) \, d\mu - \int_M  U(x,\tau_1) \, \phi(x) \, d\mu  =  & \int_{\tau_1}^{\tau_2} \int_M U^m \, \Delta \phi \, d\mu d\tau \\
& + \frac{1}{m-1} \int_{\tau_1}^{\tau_2} \int_{M}  U \, \phi \, d\mu d\tau
\end{aligned}
\end{equation}
for all $ \phi \in C^\infty_c(M) $ and a.e.~$ \tau_2>\tau_1>0 $. By setting $ \tau_2=\tau_1+1+o(1) $ and letting $ \tau_1 \to +\infty $ in \eqref{e81-bis}, we end up with the identity
\begin{equation*}\label{e82}
0 = \int_M \hat{v}^m \, \Delta \phi \, d\mu + \frac{1}{m-1} \int_{M} \hat{v} \, \phi \, d\mu
\end{equation*}
for all $ \phi \in C^\infty_c(M) $, namely $ \hat{v} $ is a solution to \eqref{eq:elliptic}. Thanks to \eqref{e79-bis} and the definition of minimal solution, it follows that $ \hat{v} $ necessarily coincides with $ v $.

We have therefore proved that $ U(\cdot,\tau) $ converges pointwise to $ v $ as $ \tau \to +\infty $. The fact that such convergence occurs \emph{locally uniformly} just follows by the H\"{o}lder-regularity properties of bounded solutions to the PME on bounded, regular domains of $ M $: see e.g.~\cite[Section 7.6]{V07} and references therein (methods can be adapted straightforwardly on Riemannian manifolds). In order to apply these regularity results to $ U $, one can perform a direct time-scaling argument as in the proof of \cite[Theorem 20.1, Step 7]{V07}. We skip details.

In conclusion, the family $ \{ U(\cdot,\tau) \}_{\tau \ge 0} $ is locally uniformly H\"older continuous, so it converges to $ v $ as $ \tau \to +\infty $ locally uniformly by the Ascoli-Arzel\`a Theorem. The convergence then becomes \emph{global} by virtue of the decaying bounds \eqref{e78} and \eqref{eq:elliptic:above}. Finally, one retrieves the asymptotic result in terms of $ u(x,t) $ by undoing the change of variables \eqref{eq: def-v}.
\end{proof}

We now turn to the pointwise upper and lower bounds for compactly-supported solutions given in Theorems \ref{thm:main} and \ref{thm:main:low }. Since computations similar to those expanded in \cite[Proofs of Theorems 3.1 and 3.2]{GMV} are involved, we shall be fairly concise.

\begin{proof}[Proof of Theorem \ref{thm:main} (upper estimates)]
For the sake of better readability, we set again $ \mu_1 \equiv \mu $. If the upper curvature bound \eqref{hp-sect} holds, thanks to Lemma \ref{lem: ode-comparison} and Laplacian comparison as recalled in Section \ref{notation}, by reasoning as in the proof of Lemma \ref{ell-ssol}, in the region $ ( M \setminus B_1(o)) \times \mathbb{R}^+ $ it is enough to construct a radially-decreasing supersolution of the equation
\begin{equation*}\label{eq: model-upper}
%u_t = \Delta\left( u^m \right) \quad \textrm{in } M \times \mathbb{R}^+
u_t=\left(u^m\right)_{rr}+\beta \, r^\mu \left(u^m\right)_r \quad \text{in } [1,+\infty) \times \mathbb{R}^+ \, ,
\end{equation*}
for a suitable constant $ \beta=\beta(n, \mu_1, Q_1, R)>0 $ whose precise value here is immaterial. Indeed, the latter will also be a supersolution of $ u_t = \Delta u^m $ in $ (M \setminus B_1(o)) \times \mathbb{R}^+ $. To this purpose, let us consider the following function, which is compactly supported in space for all $t \ge 0$:
\begin{equation}\label{upper}
\overline{u}(r,t):= \frac{C}{(t+t_0)^{\frac1{m-1}}} \left[ \frac1{(r+r_0)^{\mu-1}} - \frac{\gamma}{[\log(t+t_0)]^{\frac{\mu-1}{\mu+1}}} \right]^{\frac1{m-1}}_+ \quad \forall (r,t) \in [0,+\infty) \times \mathbb{R}^+  \, ,
\end{equation}
where $ C, \gamma, r_0, t_0$ are positive parameters to be chosen later (with $ t_0>1 $). Elementary calculations, similar to those carried out in the proofs of Lemmas \ref{ell-ssol} and \ref{lemma-lower}, show that for $ \overline{u} $ to be a supersolution in the above region one must require that, for all $ r \ge 1 $ and $ t \ge 0 $, there holds
\begin{equation}\label{eq:upper-1}
\begin{aligned}%\label{uniqueness-parabolic}
&-\frac C{m-1}\left[ \frac1{(r+r_0)^{\mu-1}}-\frac{\gamma}{[\log(t+t_0)]^{\frac{\mu-1}{\mu+1}}} \right]^{\frac1{m-1}}_+ \\
& + C \, \frac{\gamma\,(\mu-1)\,[\log(t+t_0)]^{-\frac{2\mu}{\mu+1}}}
{(\mu+1)\,(m-1)}\left[\frac1{(r+r_0)^{\mu-1}}-\frac{\gamma}{[\log(t+t_0)]^{\frac{\mu-1}{\mu+1}}}\right]^{\frac{2-m}{m-1}}_+\\
\ge & \, C^m \, \frac{m\,\mu\,(\mu-1)}{m-1}\,(r+r_0)^{-\mu-1}\left[\frac1{(r+r_0)^{\mu-1}} - \frac{\gamma}{[\log(t+t_0)]^{\frac{\mu-1}{\mu+1}}}\right]^{\frac1{m-1}}_+\\
& + C^m \, \frac{m\,(\mu-1)^2}{(m-1)^2} \, (r+r_0)^{-2\mu}\left[\frac1{(r+r_0)^{\mu-1}}-\frac{\gamma}{[\log(t+t_0)]^{\frac{\mu-1}{\mu+1}}}\right]^{\frac{2-m}{m-1}}_+\\
& - C^m \, \frac{\beta \, (\mu-1)}{m-1} \, r^{\mu} \, (r+r_0)^{-\mu} \left[\frac1{(r+r_0)^{\mu-1}}-\frac{\gamma}{[\log(t+t_0)]^{\frac{\mu-1}{\mu+1}}}\right]^{\frac1{m-1}}_+ .
\end{aligned}
\end{equation}
It is not difficult to check that, if $ r_0 \ge \overline{r}_0 $ and $ C \ge \overline{C}_0 $ for suitable $ \overline{r}_0,\overline{C}_0>0$ depending on $n,m,\mu,\beta$ and on $n,m,\mu,\beta, r_0 $, respectively, inequality \eqref{eq:upper-1} is in fact equivalent to
\begin{equation}\label{eq:upper-2}
\begin{aligned}
& C^{m-1} \, k_1 \left[ \frac1{(r+r_0)^{\mu-1}}-\frac{\gamma}{[\log(t+t_0)]^{\frac{\mu-1}{\mu+1}}}\right]^{\frac1{m-1}}_+ \\
& + \gamma \, k_2 \, [\log(t+t_0)]^{-\frac{2\mu}{\mu+1}}\left[ \frac1{(r+r_0)^{\mu-1}}-\frac{\gamma}{[\log(t+t_0)]^{\frac{\mu-1}{\mu+1}}}\right]^{\frac{2-m}{m-1}}_+ \\
\ge & \, C^{m-1} \, (r+r_0)^{-2\mu} \left[ \frac1{(r+r_0)^{\mu-1}}-\frac{\gamma}{[\log(t+t_0)]^{\frac{\mu-1}{\mu+1}}}\right]^{\frac{2-m}{m-1}}_+ ,
\end{aligned}
\end{equation}
where $ k_1 , k_2>0 $ are other numerical constants that we need not make explicit: we just point out that $ k_2 $ depends only on $n,m,\mu,\beta$, whereas $ k_1>0 $ depends on $n,m,\mu,\beta$ and continuously on $ r_0 $, behaving like $ r_0^{-\mu} $ as $ r_0 $ grows. It suffices to consider inequality \eqref{eq:upper-2} in the space-time region
\begin{equation}\label{eq:upper-region}
\left\{ (r,t) \in [1,+\infty)\times \mathbb{R}^+: \ r+r_0 < \gamma^{-\frac1{\mu-1}} \, [\log(t+t_0)]^{\frac1{\mu+1}} \right\} ,
\end{equation}
i.e.~where the quantity appearing in the positive-part brackets is indeed positive (outside the region \eqref{eq:upper-region} the differential inequality is trivially satisfied in a weak sense). There \eqref{eq:upper-2} amounts to
\begin{equation*}\label{eq:upper-3}
C^{m-1} \, k_1 \left[ \frac1{(r+r_0)^{\mu-1}}-\frac{\gamma}{[\log(t+t_0)]^{\frac{\mu-1}{\mu+1}}}\right]+\frac{\gamma \, k_2}{[\log(t+t_0)]^{\frac{2\mu}{\mu+1}}} \ge \frac{C^{m-1}}{(r+r_0)^{2\mu}} \, ,
\end{equation*}
or equivalently
\begin{equation}\label{eq:upper-4}
C^{m-1} \left[\frac{k_1}{(r+r_0)^{\mu-1}} - \frac{1}{(r+r_0)^{2\mu}} \right]- \frac{C^{m-1} \, \gamma\,k_1}{[\log(t+t_0)]^{\frac{\mu-1}{\mu+1}}} +\frac{\gamma \, k_2}{[\log(t+t_0)]^{\frac{2\mu}{\mu+1}}} \ge 0 \, .
\end{equation}
It is readily seen that, for instance, if~$ r_0^{\mu+1} \ge 2\mu/[k_1(\mu-1)] $ (which is always possible provided $\overline{r}_0$ is large -- recall that $ k_1 \sim r_0^{-\mu} $) then the l.h.s.~of \eqref{eq:upper-4} is decreasing as a function of $r$, so that the validity of \eqref{eq:upper-4} in the region \eqref{eq:upper-region} is equivalent to the validity of the latter at its (spatial) right-hand extremum, which in turn reads
\begin{equation}\label{eq:upper-5}
\gamma^{\frac{\mu+1}{\mu-1}} \le \frac{k_2}{C^{m-1}} \, .
\end{equation}
We have therefore established that under the conditions $ r_0 \ge \overline{r}_0$, $ C \ge \overline{C}_0$ and \eqref{eq:upper-5} the function \eqref{upper} is indeed a supersolution in $ (M \setminus {B_1}(o)) \times \mathbb{R}^+ $. Note that $ t_0 > 1 $ for the moment is left as a free parameter.

In order to make $ \overline{u} $ a \emph{global} supersolution, we still have to deal with the region $ B_1(o) \times \mathbb{R}^+ $. Thanks to Theorem \ref{thm:conv}, we know in particular that there exists $ \tau,\mathcal{M}>0 $, depending on $ v$ and on $ \| u_0 \|_\infty $, such that
\begin{equation*}\label{eq:upper-6}
u(x,t) \le \frac{\mathcal{M}}{(t+\tau)^{\frac{1}{m-1}}} \quad \text{for a.e. } (x,t) \in B_1(o) \times \mathbb{R}^+ \, .
\end{equation*}
Some elementary calculations show that $ \overline{u} \ge {\mathcal{M}}/{(t+\tau)^{\frac{1}{m-1}}} $ in $ B_1(o) \times \mathbb{R}^+ $ if e.g.
\begin{equation}\label{eq:upper-7}
\gamma \le \frac{(\log t_0)^{\frac{\mu-1}{\mu+1}}}{2\,(1+r_0)^{\mu-1}} \, , \quad t_0 \ge \tau \, , \quad C^{m-1} \ge 2 \, (1+r_0)^{\mu-1} \, \frac{t_0}{\tau} \, \mathcal{M}^{m-1}  \, .
\end{equation}
Finally, in addition to $ r_0 \ge \overline{r}_0 $, $ C \ge \overline{C}_0 $, \eqref{eq:upper-5} and \eqref{eq:upper-7}, we have to impose that $ \overline{u}(x,0) \ge u(x,0) $ for a.e.~$ x \in M \setminus B_1(o) $, which is easily achieved upon choosing $ \gamma $ small enough, to make sure that the support of $ \overline{u}(x,0) $ covers the one of $ u(x,0) $, and $ C $ large enough, to make sure that the minimum of $ \overline{u}(x,0) $ restricted to the support of $ u(x,0) $ is larger than the (essential) maximum of $ u(x,0) $. Upon labeling any of such choices by $ C=C_1 $, $ \gamma=\gamma_1 $, $ r_0=r_1 $ and $ t_0=t_1 $, we end up with \eqref{upper1}.
\end{proof}

\begin{proof}[Proof of Theorem \ref{thm:main:low } (lower estimates)]
Again, in order to make notations more readable, we set $ \mu_2 \equiv \mu $. If the lower curvature bound \eqref{hp-ricc} holds, thanks to Lemma \ref{lem: ode-comparison-2} and Laplacian comparison, by reasoning as in the proof of Lemma \ref{lemma-lower}, in the region $ ( M \setminus B_1(o)) \times \mathbb{R}^+ $ it is enough to find a radially-decreasing subsolution of the equation
\begin{equation*}\label{eq: model-upper1}
%u_t = \Delta\left( u^m \right) \quad \text{in } M \times \mathbb{R}^+
u_t=\left(u^m\right)_{rr} + \hat{\beta} \, r^\mu \left(u^m\right)_r \quad \text{in } [1,+\infty) \times \mathbb{R}^+ \, ,
\end{equation*}
for a suitable $ \hat \beta = \hat \beta(n,\mu_2,Q_2,R,\inf_{x \in B_R(o)}  \mathrm{Ric}_o(x))>0 $ whose precise value is unnecessary to our purposes. The latter will also be a subsolution of $ u_t = \Delta u^m $ on $ (M \setminus B_1(o)) \times \mathbb{R}^+ $. We consider a subsolution $ \underline{u}(r,t) $ of the same type as \eqref{upper}: hence, for all $ r\ge 1 $ and $ t \ge 0 $ we need to require the validity of the analogue of \eqref{eq:upper-1} with reverse inequality. By arguing similarly to above, one sees that if e.g.~$ r_0 \ge 1 $ and $ 0 <  C \le \underline{C}_0 $ (for a suitable $ \underline{C}_0>0$ depending only on $n,m,\mu,\hat{\beta}$) such inequality turns out to be equivalent, for all $ r \ge 1 $ and $ t \ge 0 $, to
\begin{equation}\label{eq:lower-1}
\begin{aligned}
& -\left[ \frac1{(r+r_0)^{\mu-1}}-\frac{\gamma}{[\log(t+t_0)]^{\frac{\mu-1}{\mu+1}}}\right]^{\frac1{m-1}}_+ \\
& + \gamma \, k_1 \, [\log(t+t_0)]^{-\frac{2\mu}{\mu+1}}\left[ \frac1{(r+r_0)^{\mu-1}}-\frac{\gamma}{[\log(t+t_0)]^{\frac{\mu-1}{\mu+1}}}\right]^{\frac{2-m}{m-1}}_+ \\
\le & \, C^{m-1} \, k_2 \, (r+r_0)^{-2\mu}\left[\frac1{(r+r_0)^{\mu-1}}-\frac{\gamma}{[\log(t+t_0)]^{\frac{\mu-1}{\mu+1}}}\right]^{\frac{2-m}{m-1}}_+ ,
\end{aligned}
\end{equation}
%\[\begin{aligned}
%&-\beta_4C\left[\frac1{r^{\mu-1}}-\gamma\frac1{[\log(t+t_0)]^{\frac{\mu-1}{\mu+1}}}\right]^{\frac1{m-1}}_+\\
%&+\beta_5C\gamma[\log(t+t_0)]^{-\frac{2\mu}{\mu+1}}\left[\frac1{r^{\mu-1}}-\gamma\frac1{[\log(t+t_0)]^{\frac{\mu-1}{\mu+1}}}\right]^{\frac{2-m}{m-1}}_+\\
%\le\,&\,\beta_6 C^mr^{-2\mu}\left[\frac1{r^{\mu-1}}-\gamma\frac1{[\log(t+t_0)]^{\frac{\mu-1}{\mu+1}}}\right]^{\frac{2-m}{m-1}}_+,\end{aligned}\]
where $k_1,k_2 > 0 $ are other suitable constants depending only on $ n,m,\mu,\hat\beta $. Again, it suffices to make sure that \eqref{eq:lower-1} holds in the region \eqref{eq:upper-region}, which leads to
\begin{equation}\label{eq:lower-2}
\left[ -\frac1{(r+r_0)^{\mu-1}}+\frac{\gamma}{[\log(t+t_0)]^{\frac{\mu-1}{\mu+1}}} \right] + \frac{\gamma \, k_1}{[\log(t+t_0)]^{\frac{2\mu}{\mu+1}}} \le \frac{C^{m-1} \, k_2}{(r+r_0)^{2\mu}} \, .
\end{equation}
By monotonicity, it is readily seen that the worst case occurs at the (spatial) right-hand extremum of \eqref{eq:upper-region}, where \eqref{eq:lower-2} amounts to
\begin{equation}\label{eq:lower-3}
\gamma^{\frac{\mu+1}{\mu-1}} \ge \frac{k_1}{C^{m-1} \, k_2} \, .
\end{equation}
Hence, under the conditions $ r_0 \ge 1$, $ 0< C \le \underline{C}_0$ and \eqref{eq:lower-3}, the function \eqref{upper} is a subsolution in $ ( M \setminus B_1(o) ) \times \mathbb{R}^+ $. We remark that $ t_0 > 1 $ is still left as a free parameter.

Let us deal with inner barriers. As observed in \cite[Proof of Theorem 3.2]{GMV}, due to the diffusive nature of the PME it is plain that there exists $ T>0 $, depending on $ u_0 $, $ m $ and the metric of $ M $ (only locally), such that
\begin{equation*}\label{eq:lower-final-1}
\inf_{x \in {B}_2(o)} u(x,T)  > 0 \, .
\end{equation*}
Therefore, a standard separable-subsolution argument on $ B_1(o) $ ensures that there exists a suitable constant $ \mathcal{I}>0 $ (depending on the just mentioned quantities) complying with
\begin{equation*}\label{eq:lower-final-2}
u(x,t) \ge \frac{\mathcal{I}}{t^{\frac{1}{m-1}}} \quad \text{for a.e. } (x,t) \in {B}_1(o) \times (T,+\infty) \, .
\end{equation*}
Finally, it is straightforward to check that the further conditions
\begin{equation}\label{eq:lower-final-3}
C \le \mathcal{I} \, r_0^{\frac{\mu-1}{m-1}} \, , \quad \gamma \ge \frac{[\log(T+t_0)]^{\frac{\mu-1}{\mu+1}}}{r_0^{\mu-1}} \, ,
\end{equation}
yield $ \underline{u}(x,t) \equiv 0 $ for all $ (x,t) \in (M \setminus B_1(o)) \times (0,T) $ and $ \underline{u}(x,t) \le \mathcal{I}/t^{\frac{1}{m-1}} $ for all $ (x,t) \in \partial B_1(o) \times [T,+\infty) $, which is enough to assert the validity of \eqref{lower1}, upon labeling by $ C=C_2 $, $ \gamma=\gamma_2 $, $ r_0=r_2 $ and $ t_0=t_2 $ any of the above choices that satisfy $ 0 < C \le \underline{C}_0 $, $ r_0 \ge 1 $, \eqref{eq:lower-3} and \eqref{eq:lower-final-3}.
\end{proof}

%%%%%%%%%%%%%%%%%%%%%%%%%%%%%%%%%%%%%%%%%%%%%%%%%%%%%%%%%%%%%%%%%%%%%%%%%%%%%
\section{Application to weighted Euclidean porous medium equations}\label{weighted}
In this section we state and briefly prove asymptotic results for nonnegative solutions to the following \emph{weighted Euclidean} PME posed in the whole space:
\begin{equation}\label{weighted-pme}
\begin{cases}
\rho \, u_t=\Delta u^m  & \text{in } \mathbb{R}^n \times \mathbb{R}^+ \, , \\
u(\cdot,0)=u_0  & \text{in } \mathbb{R}^n  \, ,
\end{cases}
\end{equation}
with a new class of decaying weights $\rho$ that had not been treated before. We require that $n\ge3$ for technical reasons. In fact, the barriers we use in this section do not work in the case $n=2$. We proceed again by means of barrier arguments, where the choice of the explicit barriers is strictly related to results on manifolds proved in this paper, through a change of (radial) variable first introduced in \cite{V} and then exploited in greater generality in \cite{GMV}. The curvature assumptions required here translate, according to the just-mentioned change of variables, into precise assumptions on the asymptotic behaviour of the weight. More precisely, this leads us to deal with weights having a \emph{supercritical} decay (though near critical), in the sense that
$$ \rho(x) \sim |x|^{-2 }\, (\log |x|)^{-\nu} \quad \text{ as } |x|=\operatorname{d}(x,0)\to+\infty \, , \quad \text{with} \ \nu>1 \,, $$
a range that had not been considered in the study of \emph{subcritical} weights carried out in \cite[Section 9]{GMV} (see Theorems 9.1 and 9.2 there).

\begin{thm}\label{weight}
Let $ \rho(x) $ be a positive function such that $ \rho \in L^\infty(\mathbb{R}^n) $ ($ n \ge 3 $), $ \rho^{-1} \in L^\infty(\mathbb{R}^n)$ and
\begin{equation}\label{rho.ass}
\frac{K_1}{|x|^2 \, (\log |x|)^{\nu}} \le  \rho(x) \le \frac{K_2}{|x|^2 \, (\log |x|)^{\nu}} \quad \text{for a.e. } x \in \mathbb{R}^n \setminus B_2(0) \, ,
\end{equation}
for some $ \nu>1 $ and $ K_1,K_2>0 $. Let $u$ be the minimal solution to \eqref{weighted-pme} corresponding to a nonnegative, nontrivial and compactly supported initial datum $ u_0 \in L^\infty(M) $. Then:

\noindent {(i)} The pointwise estimates
\begin{equation}\label{weighted-pme1}
\begin{aligned}
&\frac{C_1}{(t+t_0)^{\frac{1}{m-1}}} \left[ \frac{1}{\left( \log |x| \right)^{\nu-1}} - \frac{\gamma_1}{\left[\log (t+t_0) \right]^{\nu-1}} \right]_+^{\frac{1}{m-1}}
\le
u(x,t)
\le \\
& \frac{C_2}{(t+t_0)^{\frac{1}{m-1}}} \left[ \frac{1}{\left( \log |x| \right)^{\nu-1}} - \frac{\gamma_2}{\left[\log (t+t_0) \right]^{\nu-1}} \right]_+^{\frac{1}{m-1}}
\end{aligned}
\end{equation}
hold for a.e.~$ x \in \mathbb{R}^n \setminus B_{R_0}(0) $ and $ t \ge 0 $, where $ C_1,C_2,\gamma_1, \gamma_2,R_0,t_0 $ are suitable positive constants depending on $ n,m,\nu,\rho,u_0$.

\noindent {(ii)} Furthermore,
\begin{equation}\label{sepbeh.type}
\lim_{t\to+\infty} \left\| t^{\frac{1}{m-1}} u(\cdot,t) - V  \right\|_{L^\infty(\mathbb{R}^n)} = 0 \, ,
\end{equation}
where $V$ is the minimal, positive solution to the equation
\begin{equation}\label{ellweight}
-\Delta V^m = \frac{\rho(x)}{m-1} \, V \quad \text{in } \mathbb{R}^n
\end{equation}
as constructed in \cite{BK}, which satisfies the asymptotic estimates
\begin{equation}\label{asymptweight}
\begin{aligned}
\left[ \frac{K_1}{m(\nu-1)(n-2)} \right]^{\frac{1}{m-1}} \le & \liminf_{|x|\to+\infty} V(x) \left( \log |x| \right)^{\frac{\nu-1}{m-1}} \\
\le & \limsup_{|x|\to+\infty} V(x) \left( \log |x| \right)^{\frac{\nu-1}{m-1}} \le \left[ \frac{K_2}{m(\nu-1)(n-2)} \right]^{\frac{1}{m-1}} .
\end{aligned}
\end{equation}
\end{thm}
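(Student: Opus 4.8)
The plan is to transplant to the weighted Euclidean setting the three manifold results proved above---Theorems~\ref{thm:conv}, \ref{thm:main} and \ref{thm:main:low }---via the radial change of variables of \cite{V,GMV}, which turns (radial) solutions of \eqref{weighted-pme} into (radial) solutions of a porous medium flow on a model manifold with superquadratically growing curvatures and is precisely what produces the explicit $(\log|x|)$--barriers in \eqref{weighted-pme1}--\eqref{asymptweight} out of the manifold barriers. Since $\rho$ is not assumed radial, I would not apply the change of variables to the equation, but only use it as a guide and then verify everything directly on $\mathbb{R}^n$ using solely the two-sided bound \eqref{rho.ass}. As a preliminary step I would record two facts. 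First, the minimal positive solution $V$ of \eqref{ellweight}, whose existence is granted by \cite{BK}, is globally bounded and vanishes at infinity: this follows by comparing $V$ from above with a stationary supersolution $\overline{v}(x)=C\,(\log|x|)^{-\frac{\nu-1}{m-1}}$, suitably extended inside a ball, built exactly as in Lemma~\ref{ell-ssol} with the upper bound of \eqref{rho.ass} in place of Laplacian comparison. Second, since $-\Delta V^m=\tfrac{\rho}{m-1}V$, the function $V(x)\,(t+t_0)^{-1/(m-1)}$ solves \eqref{weighted-pme} \emph{exactly} for every $t_0\ge0$; as $u_0$ is bounded and compactly supported and $V$ is positive, one may pick $t_0$ with $u_0\le V\,t_0^{-1/(m-1)}$ and obtain, by comparison on balls and passage to the limit exactly as in the derivation of \eqref{universal}, the universal bound $u(x,t)\le V(x)\,t^{-1/(m-1)}$, whence $\|u(t)\|_\infty\le C\,t^{-1/(m-1)}$.

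For part~(ii) I would run the rescaling argument of the proof of Theorem~\ref{thm:conv} essentially verbatim. Writing $u(x,t)=e^{-\tau/(m-1)}U(x,\tau)$ with $t=e^\tau$, the function $U$ is a very weak nonnegative solution of $\rho\,U_\tau=\Delta U^m+\tfrac{1}{m-1}\rho\,U$; it satisfies $U\le V$ by the universal bound, and it is essentially nondecreasing in $\tau$ because the B\'enilan--Crandall inequality $u_t\ge -u/[(m-1)t]$ still holds here, since it only relies on the scaling $u\mapsto\lambda^{1/(m-1)}u(\cdot,\lambda t)$, which preserves \eqref{weighted-pme} just as it preserves the unweighted PME (cf.~\cite{BC} and \cite[p.~182]{V07}). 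Hence $U(\cdot,\tau)$ increases to a limit $\hat{v}\le V$ with $\hat{v}\not\equiv0$; a standard in-time truncation in the weak formulation shows $\hat{v}$ solves \eqref{ellweight}, so $\hat{v}=V$ by minimality. Interior H\"older estimates for the weighted PME on bounded regular subsets of $\mathbb{R}^n$---where the weight is bounded above and below, so the equation is a locally uniformly parabolic PME and the theory of \cite[Section~7.6]{V07} applies---upgrade the convergence to local uniform, and the decay $V(x)\to0$ together with $U\le V$ upgrades it to global uniform; undoing the change of variables gives \eqref{sepbeh.type}.

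For part~(i), the core is to check that the upper member of \eqref{weighted-pme1}, with parameters $C_2,\gamma_2$, is a supersolution and the lower member, with $C_1,\gamma_1$, a subsolution of $\rho\,u_t=\Delta u^m$ in the exterior region $\{|x|>R_0\}\times\mathbb{R}^+$. With $w(|x|)=(\log|x|)^{-(\nu-1)}$, one computes that the leading term of $\Delta\big(w^{m/(m-1)}\big)$ for $|x|$ large is $-\tfrac{m(\nu-1)(n-2)}{m-1}\,(\log|x|)^{-\nu}\,|x|^{-2}\,w^{1/(m-1)}$; matching this, through \eqref{rho.ass}, against $\rho$ times the $t$-derivative of the barrier (whose dominant part is also negative), the supersolution inequality reduces at leading order to $C_2^{\,m-1}\ge K_2/[m(\nu-1)(n-2)]$ and the subsolution inequality to $C_1^{\,m-1}\le K_1/[m(\nu-1)(n-2)]$, the lower-order corrections being absorbed by taking $R_0$ large and $\gamma_1,\gamma_2,t_0$ in suitable ranges, in complete analogy with the proofs of Theorems~\ref{thm:main} and \ref{thm:main:low }. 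The inner region $\{|x|\le R_0\}$ is treated as there: the upper barrier is dominated using $u\le V\,t^{-1/(m-1)}$, and for the lower barrier one uses that, by the diffusive character of the flow, $u\ge\mathcal{I}\,t^{-1/(m-1)}$ on $B_{R_0}(0)$ for $t$ past some $T$; one then tunes the parameters so that $\overline{u}(\cdot,0)\ge u_0\ge\underline{u}(\cdot,0)$ and so that $\underline{u}$, compactly supported at each fixed time, fits inside the approximating Dirichlet balls (cf.~\eqref{pme-balls}), and concludes by comparison on balls. Finally, for the sharp asymptotics \eqref{asymptweight} of $V$ I would mimic the proof of Theorem~\ref{thm-ell-ex}(i),(iii): starting from a crude two-sided bound on $V$ (via stationary sub- and supersolutions as in Lemmas~\ref{ell-ssol}--\ref{lemma-lower}, again using \eqref{rho.ass}), one iterates on the Newtonian potential representation $V^m(x)=\int_{\mathbb{R}^n}c_n\,|x-y|^{2-n}\,\tfrac{\rho(y)}{m-1}\,V(y)\,dy$---valid since $V^m$ is bounded, superharmonic and vanishes at infinity, and $n\ge3$---to drive the constants to $\big[K_i/(m(\nu-1)(n-2))\big]^{1/(m-1)}$.

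The step I expect to be the main obstacle is the exterior-region barrier verification: one has to keep track of several lower-order contributions coming from $\Delta$ applied to the logarithmic profile and from the positive-part cut-off, and choose the four free parameters $C,\gamma,R_0,t_0$ in a mutually consistent way. This is also precisely where the hypothesis $n\ge3$ is indispensable: for a radial profile $f$ one has $\Delta f=f''+\tfrac{n-1}{|x|}f'$, whose leading contribution on $w^{m/(m-1)}$ carries the factor $n-2$, so that it degenerates---and even changes sign---when $n=2$, and the present barriers simply fail there, as noted in the statement. A secondary, more routine difficulty is securing the \emph{sharp} constants in \eqref{asymptweight}: transplanting the $\varepsilon$-perturbation/iteration argument of Theorem~\ref{thm-ell-ex} is immediate for the lower bound but, for the upper one, genuinely requires the iteration on the Newtonian potential just described.
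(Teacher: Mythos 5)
Your proposal is correct and follows essentially the same route as the paper: the paper also verifies the logarithmic barriers directly in the exterior region using only \eqref{rho.ass} (its inequality \eqref{eq: weight-barr}), fixes them inside a ball as in the final parts of the proofs of Theorems \ref{thm:main} and \ref{thm:main:low }, and obtains part (ii) by rerunning the rescaling/monotonicity argument of Theorem \ref{thm:conv}, taking existence of the minimal $V$ from \cite{BK}. Your Newtonian-potential iteration for the sharp upper constant in \eqref{asymptweight} is just the Euclidean counterpart of the model-manifold integral iteration in the proof of Theorem \ref{thm-ell-ex}, which the paper invokes only implicitly (``barrier arguments of the same type''), so no genuinely different method is involved.
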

The above theorem is to be compared to the results of \cite{KRV10} for weights with decay $\rho(x)\sim |x|^{-\gamma}$ for some $\gamma $ mildly larger than 2. So, by combining the results of \cite[Section 9]{GMV} and Theorem \ref{weight}, we can conclude that the behaviour
$$
\rho(x) \sim |x|^{-2}\,(\log|x|)^{-1} \quad \text{ as } |x| \to + \infty
$$
is critical with respect to the appearance of a separable asymptotics as in \eqref{sepbeh.type}.

\begin{proof}[Proof of Theorem \ref{weight}]
As concerns the bound \eqref{weighted-pme1}, first of all let us show that the function
\begin{equation}\label{barrier2}
\overline{u}(x,t):=\frac{C_2}{(t+t_0)^{\frac{1}{m-1}}} \left[ \frac{1}{\left( \log |x| \right)^{\nu-1}} - \frac{\gamma_2}{\left[\log (t+t_0) \right]^{\nu-1}} \right]_+^{\frac{1}{m-1}}
\end{equation}
is a supersolution of equation \eqref{weighted-pme}. In order to improve readability, we set
\[
\mathcal A(x,t):=\left[ \frac{1}{\left( \log |x| \right)^{\nu-1}} - \frac{\gamma_2}{\left[\log (t+t_0) \right]^{\nu-1}} \right]_+ .
\]
By direct calculations one sees that $\overline{u}$ is a supersolution in $ (\mathbb{R}^n \setminus B_{R_0}(0)) \times \mathbb{R}^+ $ if and only if, in such region, there holds
\begin{equation}\label{eq: weight-barr}
\begin{aligned}
& -\frac{K_2 \, C_2}{m-1} \, \frac{\mathcal A(x,t)^{\frac{1}{m-1}}}{|x|^2 \, (\log |x|)^\nu} + \frac{K_2 \, \gamma_2 \, C_1 \, (\nu-1)}{m-1}\,\frac{\mathcal A(x,t)^{\frac{2-m}{m-1}}}{|x|^2 \, (\log |x|)^\nu \, [\log (t+t_0)]^\nu} \\
\ge & -\frac{C_2^m \, m \, (\nu-1)}{m-1}\left(n-2-\frac{\nu}{\log|x|}\right)\frac{\mathcal A(x,t)^{\frac{1}{m-1}}}{|x|^2 \, (\log |x|)^\nu}+\frac{C_2^m \, m \,(\nu-1)^2}{(m-1)^2} \, \frac{\mathcal A(x,t)^{\frac{2-m}{m-1}}}{|x|^2 \, (\log|x|)^{2\nu}} \, ,
\end{aligned}
\end{equation}
where $ R_0>2 $ and $ t_0>1 $ with no loss of generality. By close analogy with the explicit computations expanded in the proof of Theorem \ref{thm:main}, it is possible to show that \eqref{eq: weight-barr} is satisfied in the required space-time range provided $C_2,R_0,t_0$ are large enough and $\gamma_2$ is small enough. A very similar calculation shows that the function
\begin{equation}\label{eq:lowb}
\underline{u}(x,t):=\frac{C_1}{(t+t_0)^{\frac{1}{m-1}}} \left[ \frac{1}{\left( \log |x| \right)^{\nu-1}} - \frac{\gamma_1}{\left[\log (t+t_0) \right]^{\nu-1}} \right]_+^{\frac{1}{m-1}}
\end{equation}
is a subsolution of \eqref{weighted-pme} provided $t_0$ is the one chosen above, $ C_1 $ is small enough and $\gamma_1,R_0$ are large enough. Such barriers can be fixed inside a ball by reasoning likewise the final parts of the proofs of Theorems \ref{thm:main} and \ref{thm:main:low }. Parameters can also be selected so that the resulting barriers are above and below $u_0$, respectively. As for \eqref{sepbeh.type}, one proceeds as in the proof of Theorem \ref{thm:conv} provided existence of a positive (minimal) solution to the nonlinear elliptic equation \eqref{ellweight} is guaranteed. The latter fact follows from a direct application of the results of \cite{BK}, or by barrier arguments of the same type as those used in the proof of Theorem \ref{thm-ell-ex}. Indeed, the analogues of such barrier arguments yield precisely the asymptotic estimates \eqref{asymptweight}.
\end{proof}

\begin{oss} \rm The explicit form of the sub- and supersolutions exploited in the previous theorem is strictly related to the analogous results on manifolds, namely Theorems \ref{thm:main} and \ref{thm:main:low }. Indeed, let us briefly recall the change of variables introduced in \cite{V, GMV}. Consider a model manifold $M_\psi $ as in Section \ref{notation}, on which the PME \eqref{pme} reads
\begin{equation}\label{eq.hpme}
u_t=\Delta_g\!\left(u^m\right) \quad \text{with} \quad \Delta_{g} f = f^{\prime\prime} + (n-1) \, \frac{\psi^\prime}{\psi} \, f^\prime
\end{equation}
for all regular radial functions $ f=f(r) $, where the metric $ g $ on $ M_\psi $ is completely determined by $ \psi $. When looking for \emph{radial} solutions to \eqref{eq.hpme}, through a suitable change of variables $s=s(r)$ one can transform \eqref{eq.hpme} into an equation for ${\hat u}(s,t)= u(r(s),t)$ of the form
\begin{equation*}\label{eq.wpme}
\rho(s) \, \hat{u}_t= \Delta_s \! \left( \hat{u}^m \right) ,
\end{equation*}
where now $\Delta_s$ is the \emph{Euclidean} radial Laplacian in dimension $n$ (the radial coordinate being $s\equiv|x|$)
%namely $ \Delta_s=\Delta_r $ with $ \psi(s)=s $,
and $ \rho(s) $ is a suitable positive weight. The required condition for the change of variables to work as above turns out to be
\begin{equation}\label{eq.diff}
\frac{{d}s}{s^{n-1}}=\frac{{d}r}{\psi(r)^{n-1}} \, .
\end{equation}
Upon integrating \eqref{eq.diff} between $r$ and $+\infty$ and using the asymptotic properties of $\psi$ that follow from our curvature bounds, one can show (see \cite{GMV} for details) that $\rho$ is implicitly given by
\begin{equation*}\label{rho.def}
\rho(s)=\frac{\left[\psi\!\left(r(s)\right) \right]^{2(n-1)}}{s^{2(n-1)}} \quad \forall s \in (0,+\infty) \, ,
\end{equation*}
that it is regular, positive and complies with the bounds \eqref{rho.ass}. Hence, in principle under additional assumptions on $\rho$, one can translate the results we proved on manifolds to the weighted Euclidean setting. However, a posteriori, barriers as in \eqref{barrier2} and \eqref{eq:lowb} yield super- and subsolutions of \eqref{weighted-pme} under the sole assumption \eqref{rho.ass}.
\end{oss}

%%%%%%%%%%%%%%%%%%%%%%%%%%%%%%%%%%%%%%%%%%%%%%%%%%

\section{Comments and extensions}

\noindent $\bullet$  We have extended our previous investigations on asymptotic behaviour of the Porous Medium Equation on Riemannian manifolds to the case of very curved Cartan-Hadamard manifolds with curvature exponents $2\mu>2$. We note that the bulk behaviour is quite new, in the form of a separate-variable behaviour related to a special nonlinear elliptic equation, whose study has independent interest, and this is reminiscent of Dirichlet problems on bounded domains.

\noindent $\bullet$ On the other hand, the propagation properties depend on the outer behaviour and then there is  continuity for the radius (and to some extent volume) size of the support across the value $\mu=1$. Actually, the support radius behaves like $(\log t)^{1/(1+\mu)}$, the exponent going all the way from infinity at $\mu=-1$ to 0 as $ \mu \to +\infty$. Regarding the estimate of the volume of the support, at least on model manifolds we have
$$
\mathsf{V}(t) := \mathrm{Vol}(B_{\mathsf R(t)}(o))=\mathrm{C}(n)\int_0^{\mathsf R(t)} \psi(r)^{n-1}\,dr \,, \quad \frac{\psi'}{\psi}\sim r^{\mu}\,;
$$
if we knew that $\mathsf R(t) \sim (c \, \log t)^{1/(1+\mu)}$ for a precise $ c>0 $, we would get at leading order the behaviour $ \mathsf{V}(t) \sim t^{{(n-1)c}/(\mu+1)}$ as $ t \to +\infty $, with possible logarithmic corrections. A bound of this form for $\mathsf R(t)$ is natural on model manifolds satisfying our curvature conditions as equalities, in view of \eqref{bound-smooth-infty} and \eqref{bound-smooth-infty-lower}. In order to make comparisons we need an accurate value of $c$:  this was done in the case of the hyperbolic space in \cite{V} but is nontrivial in the present situation and is left as an open problem. Nonetheless, a general lower bound can be estimated by an easy argument and holds under the sole \emph{upper} curvature assumption: thus, if $B(t):=B_{\mathsf R(t)}(o)$ we get, by the mass conservation property  and \eqref{universal1},
$$
\mathsf M:=\int_{B(t)} u(x,t)\,d\mu \le \|u(t)\|_{L^\infty(M)} \, \mathsf V(t) \le C \, t^{-\frac{1}{m-1}} \, \mathsf V(t)  \, ,
$$
which yields
$$
\mathsf V(t) \ge C \, \mathsf M \, t^{\frac{1}{m-1}} \, .
$$
This is a good bound from below and it gives a lower estimate  on the above constant $c$.  In particular, volume filling occurs at least as fast as in $\mathbb{H}^n$ and faster than in $\mathbb{R}^n$. Clearly, a bound from above in terms of a different power of $t$ also holds in view of \eqref{bound-smooth-infty} under upper curvature bounds. We conjecture that the correct power is indeed $t^{1/(m-1)}$, with possible logarithmic corrections.
%\color{red} The upper bound fails... for the moment. let us do an asymptotic  calculation on level sets
%For large tomes we will have $u t^{1/(m-1)}\sim v$ so that integrating in the region $B_\epsilon(t)$ where the solution is larger than $\epsilon\,t^{1/(m-1)} $
%$$
%\epsilon\,t^{-1/(m-1)}V(B_\epsilon(t)) \le \int_{B_\epsilon} u\,dvol(x)\sim  \int_{B_\epsilon} v \,dvol(x)\le ...
%$$
%But  $ v(x)\sim r^{-\frac{\mu-1}{m-1}}$ so that the difference will not count much in first approx. Then
%$$
%V(B_\epsilon(t))\ge \frac{C}{\epsilon}\,t^{+1/(m-1)}...
%$$
%
%for every epsilon for t>t_eps the level set O_eps(t)={u(x,t)>eps t^[-1/(m-1)$} is
%calculated as
%  eps t^{-1/(m-1)}V_\eps,t\ le  \int_{Omega_eps}vdvol\sim   C (check this)
%hence this volume grows more like t^{1/(m-1)} and the support even more. That is all I know

\noindent $\bullet$  We have considered a representative family of curvatures with power divergence $O(r^{2\mu})$ as $r\to\infty$ and we have found explicit support rates with exponents that go to zero as $\mu\to+\infty$. Moreover, by taking a more divergent curvature function we could have obtained support propagation rates as slow as we wanted. Finally, the Dirichlet problem in a bounded domain  $\Omega$  of $M$ could be obtained as limit of the solutions in perturbations $M_k$ of $M$ (having $\Omega $ in common) that have increasing curvatures in the complement of $\Omega$. We leave these developments to the interested reader.

\noindent $\bullet$ The sublinear elliptic equation is important in itself and is worth a final comment. Brezis and Kamin in \cite{BK} show uniqueness of solutions to such equation in the weighted Euclidean case, under appropriate conditions on the weight; they consider a wider class of solutions, namely those whose $\liminf$  at infinity vanishes. However, their arguments cannot be easily adapted to the manifold framework, since they involve Liouville-type results that may fail under the present geometric assumptions on $M$, see e.g.~\cite{TA}. It is an open problem to establish whether uniqueness holds in even larger classes of solutions, possibly after requiring additional properties of the manifold.

\bigskip

\noindent {\sc Acknowledgment.} J.L.V.~was supported by Spanish Project MTM2014-52240-P. G.G.~was partially supported by the PRIN Project {``Equazioni alle derivate parziali di tipo ellittico e parabolico: aspetti geometrici, disuguaglianze collegate, e applicazioni''} (Italy). M.M.~was partially supported by the GNAMPA Project ``Equazioni diffusive non-lineari in contesti non-Euclidei e disuguaglianze funzionali associate'' (Italy). Both G.G.~and M.M.~have also been supported by the Gruppo Nazionale per l'Analisi Matematica, la Probabilit\`a e le loro Applicazioni (GNAMPA) of the Istituto Nazionale di Alta Matematica (INdAM, Italy).

%\newpage

\end{document}